\renewcommand{\PrintDOI}[1]{%
  \href{http://dx.doi.org/#1}{{\tt DOI:#1}}%
}
\renewcommand{\eprint}[1]{#1}
\newtheorem{theorem}{Theorem}[section]
\newtheorem{lemma}[theorem]{Lemma}
\newtheorem{proposition}[theorem]{Proposition}
\newtheorem{corollary}[theorem]{Corollary}
\theoremstyle{definition}
\newtheorem{definition}[theorem]{Definition}
\theoremstyle{remark}
\newtheorem{rem}[theorem]{Remark}
\newtheorem{remark}[theorem]{Remark}
\newtheorem{example}[theorem]{Example}
\newcommand\pf{\begin{proof}}
\newcommand\epf{\end{proof}}
\newcommand\bp{\begin{proof}}
\newcommand\ep{\end{proof}}
\newcommand{\circt}{\mathbin{\ooalign{$\ocircle$\cr\hidewidth\raise-.15ex\hbox{$\scriptstyle\top\mkern2.1mu$}\cr}}} 
\newcommand{\smCirct}{\mathbin{\ooalign{$\scriptstyle\ocircle$\cr\hidewidth\raise-.12ex\hbox{$\scriptscriptstyle\top\mkern1mu$}\cr}}}  
\newcommand{\C}{\mathbb{C}}
\newcommand{\R}{\mathbb{R}}
\newcommand{\T}{\mathbb{T}}
\newcommand{\ZZ}{\mathbb{Z}}
\newcommand{\N}{\mathbb{N}}
\newcommand{\A}{{\mathcal A}}
\newcommand{\BB}{{\mathcal B}}
\newcommand{\CC}{{\mathcal C}}
\newcommand{\DD}{\mathcal{D}}
\newcommand{\PP}{{\mathcal P}}
\newcommand{\U}{\mathcal{U}}
\newcommand{\eps}{\varepsilon}
\newcommand{\Dhat}{\hat\Delta}
\newcommand{\un}{\mathds{1}}
\newcommand{\abel}{\mathrm{ab}}
\newcommand{\Pol}{\mathcal{O}}
\newcommand{\couni}{\varepsilon}
\newcommand{\co}{\mathrm{co}}
\newcommand{\reg}{\mathrm{reg}}
\newcommand{\K}{\mathrm{K}}
\newcommand{\GL}{\mathrm{GL}}
\newcommand{\SL}{\mathrm{SL}}
\newcommand{\Hpf}{\mathcal{H}}
\newcommand{\SU}{\mathord{\mathrm{SU}}}
\DeclareMathOperator{\Hilb}{Hilb}
\DeclareMathOperator{\Aut}{Aut}
\DeclareMathOperator{\Hom}{Hom}
\DeclareMathOperator{\Reg}{Reg}
\DeclareMathOperator{\End}{End}
\DeclareMathOperator{\Ker}{Ker}
\DeclareMathOperator{\Rep}{Rep}
\DeclareMathOperator{\Corep}{Corep}
\DeclareMathOperator{\Ch}{Ch}
\DeclareMathOperator{\Id}{Id}
\DeclareMathOperator{\ract}{\lhd}
\DeclareMathOperator{\lact}{\rhd}
\DeclareMathOperator{\Vect}{Vect}
\DeclareMathOperator{\ev}{ev}
\DeclareMathOperator{\St}{St}
\DeclareMathOperator{\tr}{tr}
\DeclareMathOperator{\Mat}{Mat}
\numberwithin{equation}{section}
\title{Graded twisting of categories and quantum groups by group actions}
\author[J. Bichon]{Julien Bichon}
\email{Julien.Bichon@math.univ-bpclermont.fr}
\address{Laboratoire de Math\'{e}matiques, Universit\'{e} Blaise Pascal, Campus universitaire des C\'{e}zeaux, 3 place Vasarely, 63178 Aubi\`{e}re Cedex, France}
\author[S. Neshveyev]{Sergey Neshveyev}
\email{sergeyn@math.uio.no}
\address{Department of Mathematics, University of Oslo, P.O. Box 1053
Blindern, NO-0316 Oslo, Norway}
\thanks{The research leading to these results has received funding
from the European Research Council under the European Union's Seventh
Framework Programme (FP/2007-2013)
/ ERC Grant Agreement no. 307663
}
\author[M. Yamashita]{Makoto Yamashita}
\email{yamashita.makoto@ocha.ac.jp}
\address{Department of Mathematics, Ochanomizu University, Otsuka
2-1-1, Bunkyo, 112-8610 Tokyo, Japan}
\thanks{Supported by JSPS KAKENHI Grant Number 25800058}
\begin{document}

\begin{abstract}
Given a Hopf algebra $A$ graded by a discrete group together with an action of the same group preserving the grading, we define a new Hopf algebra, which we call the graded twisting of $A$. If the action is by adjoint maps, this new Hopf algebra is a twist of $A$ by a pseudo-$2$-cocycle. Analogous construction can be carried out for monoidal categories. As examples we consider graded twistings of the Hopf algebras of nondegenerate bilinear forms, their free products, hyperoctahedral quantum groups and $q$-deformations of compact semisimple Lie groups. As  applications, we show that the analogues of the Kazhdan--Wenzl categories in the general semisimple case cannot be always realized as representation categories of compact quantum groups, and
for genuine compact groups, we analyze quantum subgroups of the new twisted compact quantum groups, providing a full description when the twisting group is cyclic of prime order.
\end{abstract}

\date{June 30, 2015; minor revision April 27, 2016}

\maketitle

\section*{Introduction}
\label{sec:intro}

Various forms of twisting constructions have appeared in the theory of
quantum groups since the famous proof of the Kohno--Drinfeld theorem by
Drinfeld~\cite{MR1047964}, where he studied twistings of quasi-Hopf algebras
and showed that the quantized universal enveloping algebras are twists of the usual universal enveloping algebras equipped with the associators defined by the Knizhnik--Zamolodchikov equation.  On the dual side of quantized function algebras, the
corresponding procedure for $2$-cocycle twisting appeared in a work of
Doi~\cite{MR1213985}, which clarified the relation between the multiparametric deformations of $\GL(n)$ by Takeuchi~\cite{MR1065785} and Artin--Schelter--Tate~\cite{MR1127037} for different parameters.

In the present paper we study twistings of Hopf algebras by a particular class of pseudo-$2$-cocycles (meaning that the new coproduct is still strictly associative) and related
generalizations.  One feature of twisting by pseudo-$2$-cocycles is that the
corresponding tensor category of corepresentations is perturbed in a very controlled
way, whereas the $2$-cocycle twisting does not change the
category at all.  Specifically, the
new Hopf algebra has the same combinatorial structure of corepresentations
in terms of fusion rules, quantum dimensions, and classical
dimensions, so that the perturbation is very mild.

The motivation for this paper stems from the previous work of the second and third authors~\cite{MR3340190} which
gave quantum group realizations of the Kazhdan--Wenzl categories~\cite{MR1237835}, which are the representation categories of $\SL_q(n)$ with nontrivial
associators given by $3$-cocycles on the chain group of the category.
It turns out that a similar construction can be carried out starting from a $\Gamma$-grading on a Hopf algebra $A$ and an action by the same discrete group $\Gamma$ preserving the grading.  If, moreover, the
automorphisms defining the action are adjoint, then the failure of lifting the action to a
homomorphism from $\Gamma$ into the group $H^1(A)$ of characters of~$A$ (with convolution product) gives rise to a pseudo-$2$-cocycle and therefore to a new associator on the monoidal category of corepresentations of $A$. By
allowing non-adjoint actions, we further
expand the scope of our twisting procedure, resulting in categories
which have different fusion rules than the original ones. The procedure can also
be formulated at the level of monoidal categories.

In more detail, our twisting consists of two quite elementary steps.
The first step is to take the \emph{crossed products} (\emph{smash products}),
which give Hopf algebras from group actions on Hopf algebras. In
the framework of monoidal categories an analogous construction was given
by Tambara~\cite{MR1815142}. The second step is to take the \emph{diagonal subalgebra} or
\emph{diagonal subcategory} inside the crossed product (see Section~\ref{sec:grtw} for precise formulations) using
the grading. The compatibility between the grading and
the action ensures that we obtain a Hopf algebra or a monoidal category as a
result.

We provide applications of our framework to two general problems in quantum group theory.

\begin{enumerate}
 \item The first is the quantum group realization problem for the Kazhdan--Wenzl type categories, mentioned above. The precise formulation is as follows: given a compact quantum group $G$ and a $3$-cocycle $c$ on the chain group of its representation category, is the twisted category $(\Rep G)^c$ (with associator given by $c$) still the representation category of a compact quantum group, or in other words, does it admit a unitary fiber functor? Using \cite{arXiv:1405.6572}, we give a necessary and sufficient condition in order that $(\Rep G)^c$ admits a dimension preserving fiber functor when $G$ is coamenable (Corollary \ref{cfib}). We then show that the analogues of the Kazhdan--Wenzl categories in the semisimple case cannot be always realized as representation categories of compact quantum groups (Corollary \ref{cor:kwnotqg}).

In case $G$ is the $q$-deformation of a compact semisimple Lie group,
our procedure for finding pseudo-$2$-cocycles boils down to a construction of cochains on the dual of the maximal torus with coboundary living on the dual of the center, which has been discussed in \citelist{\cite{MR3340190}\cite{arXiv:1405.6574}}. While this might look like a very special construction, we prove,  that for $q>0$, $q\ne1$, up to coboundaries there are no other unitary pseudo-$2$-cocycles with the corresponding associator given by a $3$-cocycle on the chain group (Theorem \ref{thm:3coc-2pseudo}).

\item The second problem is the determination of the quantum subgroups of the twisted quantum groups. We provide a full answer if our compact quantum group is obtained as the graded twisting of a genuine compact group by a cyclic group of prime order (Theorem \ref{thm:subgroups}).  This is a wide generalization of the description of the quantum subgroups of $\SU_{-1}(2)$ by Podle\'s \cite{MR1331688},  and complements the previous work of the first author and Yuncken~\cite{MR3194750}
on a similar problem for $2$-cocycle twistings. Similarly to~\cite{MR3194750}, our analysis is based on a careful study of irreducible representations of the twisted algebra of regular functions.
\end{enumerate}

\smallskip

The  paper is organized  as follows.
Section~\ref{sec:prelim} contains some background material on graded categories and Hopf algebras, group actions and crossed products.

In Section~\ref{sec:grtw} we give a detailed presentation of our twisting procedure, first for monoidal categories and then for Hopf algebras. We then consider several examples arising from Hopf algebras of nondegenerate bilinear forms and their free products.

In Section~\ref{sec:cpt-q-grps} we specialize to the case of Hopf algebras of regular functions on compact quantum groups and consider more examples coming from free orthogonal and unitary quantum groups, half-liberations of orthogonal and unitary groups, hyperoctahedral quantum groups, and $q$-deformations of compact semisimple Lie groups.
We then discuss the realization problem for the Kazhdan--Wenzl type categories.

In Section~\ref{sec:cpt-grp-twist} we further specialize to Hopf algebras of regular functions on genuine compact groups and study the problem of describing quantum subgroups of the corresponding twisted compact quantum groups.

\medskip
\paragraph{\bf Acknowledgement} The authors would like to thank the reviewer for careful reading of the manuscript, which led to elimination of several inaccuracies and to improvement of the presentation.

\section{Preliminaries}
\label{sec:prelim}

Throughout the whole paper we denote by $K$ a commutative field, and by $\Gamma$ a discrete group. We consider only vector spaces, algebras, etc., over $K$. When we talk about $*$-structures, we assume that $K=\C$.

\subsection{Graded categories}
\label{sec:gamma-gr-mon-cat}

A $\Gamma$-graded category $\CC$ is a $K$-linear category with full subcategories~$\CC_g$ for $g \in \Gamma$, such that any object $X$ in $\CC$ admits a unique (up to isomorphism) decomposition $X \simeq \oplus_{g \in \Gamma} X_g$ with $X_g \in \CC_g$ such that $X_g=0$ for all but a finite number of $g$'s, and there are no nonzero morphisms between objects in $\CC_g$ and $\CC_h$ for $g\ne h$. Let us say that the grading is \emph{full} if the subcategories $\CC_g$ are strictly full, i.e.,~closed under taking isomorphic objects. Clearly, any grading uniquely extends to a full grading, and we do not lose generality by imposing this condition.

We say that~$\CC$ is a $\Gamma$-graded monoidal category if in addition it is a monoidal $K$-linear category, such that $\un \in \CC_e$ and the monoidal structure satisfies $X \otimes Y \in \CC_{g h}$ for all homogeneous objects $X \in \CC_g$ and $Y \in \CC_h$.

In the semisimple case there is a universal grading. Namely, assume that $\CC$ is an essentially small semisimple monoidal $K$-linear category with simple unit.
Then the \emph{chain group} $\Ch(\CC)$ of $\CC$ is the group generated by the formal symbols $[X]$ for the simple objects $X$ of $\CC$, subject to the relation $[X] [Y] = [Z]$ whenever $Z$ appears as a subobject of $X \otimes Y$.
Note that if $X$ is simple and there exists a right dual $X^\vee$ of $X$, then the class $[X^\vee]$ defines the inverse of $[X]$ in $\Ch(\CC)$. Of course the left duals have the same property.

The chain group defines a $\Ch(\CC)$-grading on $\CC$ in the obvious way: $\CC_g$ consists of direct sums of simple objects $X$ such that $[X]=g$. The following is immediate by definition.

\begin{lemma}\label{lem:ss-cat-chain-grading}
If $\CC$ is an essentially small semisimple monoidal $K$-linear category with simple unit, then to be given a full $\Gamma$-grading on $\CC$ is equivalent to be given a group homomorphism $\Ch(\CC)\to\Gamma$.
\end{lemma}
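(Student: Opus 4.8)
The plan is to exhibit the two data as mutually inverse constructions, so I would set up a map in each direction and check that they compose to the identity. Throughout I use that $\CC$ is semisimple, so that every object is a finite direct sum of simples and subobjects coincide with direct summands, and that $\Hom(X,Y)=0$ for non-isomorphic simple objects $X,Y$. I would first record two elementary consequences of the definition of $\Ch(\CC)$: since $\un$ is simple and $X\otimes\un\simeq X\simeq\un\otimes X$ for every simple $X$, the defining relation gives $[\un]=e$ and, taking one of the factors to be $\un$, also $[X]=[X']$ whenever $X\simeq X'$. Thus $[\,\cdot\,]$ descends to isomorphism classes and sends the unit class to $e$.

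Given a homomorphism $\phi\colon\Ch(\CC)\to\Gamma$, I would define $\CC_g$ to be the strictly full subcategory whose objects are the direct sums of simples $X$ with $\phi([X])=g$. The unique decomposition $X\simeq\oplus_g X_g$ is then just the grouping of the simple constituents of $X$ according to the value of $\phi$, and both its uniqueness and the vanishing of morphisms between different grades follow from semisimplicity. For the monoidal compatibility, if $X\in\CC_g$ and $Y\in\CC_h$ are simple, then every simple subobject $Z$ of $X\otimes Y$ satisfies $[Z]=[X][Y]$ by definition of the chain group, so $\phi([Z])=gh$ and hence $X\otimes Y\in\CC_{gh}$; the general case follows by distributing $\otimes$ over $\oplus$. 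Finally $\un\in\CC_e$ because $\phi([\un])=\phi(e)=e$.

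Conversely, given a full $\Gamma$-grading, each simple $X$ lies in a single $\CC_g$ (a simple object cannot split across grades), and I would set $\phi([X])=g$. The point to verify --- and the only step that is not purely formal --- is that $\phi$ respects the defining relations of $\Ch(\CC)$, i.e.\ that it descends to a well-defined homomorphism on the chain group. For this I would show that each $\CC_g$ is closed under direct summands: if $W\in\CC_{gh}$ and $W\simeq Z\oplus Z'$, then decomposing $Z$ and $Z'$ into graded pieces and using uniqueness of the graded decomposition of $W$ forces all pieces of $Z$ and $Z'$ outside degree $gh$ to vanish. Applying this with $W=X\otimes Y\in\CC_{gh}$ shows that any simple subobject $Z$ of $X\otimes Y$ lies in $\CC_{gh}$, whence $\phi([X])\phi([Y])=gh=\phi([Z])$ and the relation $[X][Y]=[Z]$ is preserved. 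Checking that the two constructions are mutually inverse is then immediate: a homomorphism is recovered from its associated grading by its values on the generators $[X]$, and a full grading is recovered because $\CC_g$ is by hypothesis strictly full and closed under direct sums. The main (and mild) obstacle is precisely this closure of the graded pieces under subobjects, which is what allows $\phi$ to descend to $\Ch(\CC)$.
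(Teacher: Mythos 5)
Your proof is correct and takes the same route the paper intends: the paper states the lemma as ``immediate by definition'' right after observing that the chain group defines the universal grading by simple objects, and your writeup just supplies the routine verifications of that correspondence. The one step you rightly isolate---closure of each $\CC_g$ under direct summands, which is what lets $\phi$ descend to $\Ch(\CC)$---is precisely the content the paper treats as immediate, so there is no divergence in approach.
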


\smallskip

The chain group can also be described as follows. For a tensor functor $F\colon\CC\to\CC$ denote by $\Aut^\otimes(F)$ the group of natural monoidal automorphisms of $F$.  The following result is more or less known. For the representation category of a compact group $G$, it reduces to M\"uger's result~\cite{MR2130607} which states that $\Ch(\Rep G)$ is the Pontryagin dual of the center of $G$, while for the fusion categories it is proved in~\cite{MR2383894}.

\begin{proposition}\label{prop:chain-grp-and-nat-aut-id-fuct-duality}
Assume that $\CC$ is an essentially small semisimple monoidal $K$-linear category with simple unit, such that $\End_\CC(U) = K$ for any simple object $U$. Then there is a natural group isomorphism
$$
\Hom(\Ch(\CC), K^\times) \simeq \Aut^\otimes(\Id_\CC),
$$
where the group structure of $\Hom(\Ch(\CC), K^\times)$ is given by pointwise multiplication.
\end{proposition}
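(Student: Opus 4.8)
The plan is to use semisimplicity together with Schur's lemma to reduce a natural monoidal automorphism of $\Id_\CC$ to a family of scalars indexed by the simple objects, and then to recognize the monoidality constraint as precisely the defining relations of $\Ch(\CC)$.

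First I would describe a natural transformation $\eta\colon\Id_\CC\to\Id_\CC$ concretely. For each simple object $U$ the component $\eta_U$ is an endomorphism of $U$, hence by the hypothesis $\End_\CC(U)=K$ a scalar $\lambda_U\in K$; invertibility of $\eta$ forces $\lambda_U\in K^\times$. Conversely, since $\CC$ is semisimple, any assignment of scalars $(\lambda_U)$ on isomorphism classes of simple objects extends uniquely to a natural endomorphism of $\Id_\CC$: on a general object one decomposes into simples and acts by $\lambda_U$ on each isotypic component, and naturality with respect to an arbitrary morphism $f$ holds because the matrix entries of $f$ between simple summands either vanish (non-isomorphic simples) or are scalars (a fixed simple), and in both cases commute with the diagonal scalar action. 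Thus, ignoring the monoidal structure, $\Aut(\Id_\CC)$ is the group of $K^\times$-valued functions on the set of isomorphism classes of simple objects.

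Next I would impose the monoidal condition. For the identity functor with its trivial monoidal structure, $\eta$ is monoidal precisely when $\eta_{X\otimes Y}=\eta_X\otimes\eta_Y$ for all $X,Y$ and $\eta_\un=\id_\un$. Taking $X,Y$ simple, the right-hand side is the scalar $\lambda_X\lambda_Y$ times the identity of $X\otimes Y$, while the left-hand side acts by $\lambda_Z$ on each simple summand $Z$ of $X\otimes Y$; hence monoidality is equivalent to the relations $\lambda_Z=\lambda_X\lambda_Y$ for every simple $Z$ appearing in $X\otimes Y$, together with $\lambda_\un=1$. Finally I would match this with the presentation of $\Ch(\CC)$: since that group is generated by the symbols $[X]$ subject to $[X][Y]=[Z]$ whenever $Z\subseteq X\otimes Y$, the assignment $[X]\mapsto\lambda_X$ descends to a homomorphism $\Ch(\CC)\to K^\times$ if and only if the above multiplicativity relations hold. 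This sets up the desired bijection between $\Aut^\otimes(\Id_\CC)$ and $\Hom(\Ch(\CC),K^\times)$, and it is a group isomorphism because composition of natural automorphisms is pointwise multiplication of the scalars $\lambda_U$, which matches the pointwise product of characters.

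I expect the only delicate point to be the bookkeeping: verifying that the scalar description of $\eta$ is genuinely natural rather than merely defined component-wise, and that the monoidal axiom in a possibly non-strict $\CC$ really reduces to the stated scalar identity. The latter is convenient to handle by observing that scalar morphisms commute with the associativity and unit constraints, so that the constraints $F_2$, $F_0$ of $\Id_\CC$ drop out of both sides of the monoidality equation.
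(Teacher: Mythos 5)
Your proof is correct and follows essentially the same route as the paper: both identify an element of $\Aut^\otimes(\Id_\CC)$ with a family of scalars $\lambda_U\in K^\times$ on simple objects via $\End_\CC(U)=K$ and semisimplicity, and both observe that monoidality $\xi_{X\otimes Y}=\xi_X\otimes\xi_Y$ forces $\lambda_Z=\lambda_X\lambda_Y$ for every simple summand $Z\subseteq X\otimes Y$, which is exactly the defining presentation of $\Ch(\CC)$. Your write-up is somewhat more explicit than the paper's (naturality via Schur-type matrix entries, the reduction of the monoidal axiom in a non-strict category, $\lambda_\un=1$), but the construction and the mutually inverse correspondence are the same.
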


\bp
Let $\phi$ be a group homomorphism from $\Ch(\CC)$ to $K^\times$. We can define a natural transformation $\xi^\phi_X \colon X \to X$ by setting $\xi^\phi_X = \phi([X]) \iota_X$ for simple $X$ and extending it to general objects of $\CC$ by direct sum decomposition into simple objects. The group homomorphism property of $\phi$ implies that $\xi^\phi$ belongs to $\Aut^\otimes(\Id_\CC)$. It is straightforward to see that $\phi \mapsto \xi^\phi$ is a group homomorphism.

Reversing the above correspondence, starting from $\xi \in \Aut^\otimes(\Id_\CC)$, we can define a map $\phi^\xi \colon \Ch(\CC) \to K^\times$ by the characterization $\phi^\xi([X]) \iota_X = \xi_X$. As $\xi_{X \otimes Y} = \xi_X \otimes \xi_Y$, any subobject $Z$ of $X \otimes Y$ satisfies $\xi_Z = \phi^\xi([X]) \phi^\xi([Y]) \iota_Z$. Thus, $\phi^\xi$ is a group homomorphism. It is clear from the construction that $\xi \mapsto \phi^\xi$ and $\phi \mapsto \xi^\phi$ are inverse to each other.
\ep

Next, let us translate the above to the algebraic framework of representation category of counital coalgebras. Denote by $K\Gamma$ the group algebra of $\Gamma$ over $K$. For a coalgebra $A$ denote by $\Corep (A)$ the category of finite dimensional left comodules over~$A$. Recall that a homomorphism of coalgebras $f\colon A \to B$ is said to be \emph{cocentral} if $$f(a_{(1)}) \otimes a_{(2)} = f(a_{(2)}) \otimes a_{(1)}$$ holds for any $a \in A$. This condition is equivalent to saying that the image of the transpose map $f^t\colon B^*\to A^*$ lies in the center of the algebra $A^*$ of linear functionals on $A$, endowed with the convolution product.

\begin{lemma}\label{lem:gradings}
For any coalgebra $A$ there is a one-to-one correspondence between
\begin{enumerate}
\item full $\Gamma$-gradings on $\Corep(A)$;
\item $\Gamma$-gradings on $A$ such that $\Delta(A_g)\subset A_g\otimes A_g$ for all $g\in\Gamma$;
\item cocentral coalgebra homomorphisms $A\to K\Gamma$.
\end{enumerate}

Similarly, if $A$ is a Hopf algebra, then there is a one-to-one correspondence between
\begin{itemize}
\item[(i$'$)] full $\Gamma$-gradings on the monoidal category $\Corep(A)$;
\item[(ii$'$)] $\Gamma$-gradings on $A$ such that $\Delta(A_g)\subset A_g\otimes A_g$ and $A_g A_h\subset A_{g h}$ for all $g,h\in\Gamma$;
\item[(iii$'$)] cocentral Hopf algebra homomorphisms $A\to K\Gamma$.
\end{itemize}
\end{lemma}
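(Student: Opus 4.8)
The plan is to prove the coalgebra statement (i)--(iii) first and then obtain the Hopf algebra statement (i$'$)--(iii$'$) by overlaying the multiplicative structure on top of it. Within the coalgebra part I would establish (ii)$\Leftrightarrow$(iii) by an explicit Sweedler computation and (i)$\Leftrightarrow$(iii) by a Tannaka-type reconstruction through the convolution algebra $A^*$ of functionals on $A$.

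For (ii)$\Rightarrow$(iii), given a grading with $\Delta(A_g)\subseteq A_g\otimes A_g$ I would define $f\colon A\to K\Gamma$ by $f|_{A_g}=\varepsilon(\,\cdot\,)\,g$; the counit axiom makes $f$ a coalgebra homomorphism, and for $a\in A_g$ both $\sum f(a_{(1)})\otimes a_{(2)}$ and $\sum f(a_{(2)})\otimes a_{(1)}$ collapse to $g\otimes a$, so $f$ is cocentral. Conversely, from a cocentral $f$ I would set $\chi_g=p_g\circ f\in A^*$, where $p_g\colon K\Gamma\to K$ extracts the coefficient of $g$; the coalgebra-map property of $f$ becomes $\chi_g*\chi_h=\delta_{g,h}\,\chi_g$ and $\sum_g\chi_g=\varepsilon$, so the right $K\Gamma$-coaction $(\id\otimes f)\Delta$ splits $A=\bigoplus_g A_g$ with $A_g=\{a:\sum a_{(1)}\chi_g(a_{(2)})=a\}$. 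The one point that needs real work is $\Delta(A_g)\subseteq A_g\otimes A_g$: here I would use that the commuting left and right $K\Gamma$-coactions give a $\Gamma\times\Gamma$-bigrading $A=\bigoplus_{g,h}A_{g,h}$, that $\varepsilon$ vanishes on $A_{g,h}$ for $g\neq h$, and that cocentrality forces $A_{g,h}=0$ for $g\neq h$, leaving exactly the diagonal grading.

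For (iii)$\Rightarrow$(i) I would attach to each comodule $(V,\delta)$ the endomorphisms $E^V_g=(\chi_g\otimes\id)\delta$. They are orthogonal idempotents summing to $\id_V$, and a direct coassociativity computation shows that cocentrality of $f$ is precisely the assertion that each $E^V_g$ is a comodule morphism; hence $V_g:=E^V_g V$ is a subcomodule, $V\simeq\bigoplus_g V_g$ with $V_g\in\CC_g$, and naturality of the $E^V_g$ rules out morphisms across distinct degrees, giving the full grading. Conversely a full grading provides natural comodule-endomorphisms $E^V_g$ projecting onto the homogeneous pieces; by the reconstruction identifying natural comodule-endomorphisms of $\Id_{\Corep(A)}$ with cocentral elements of $A^*$ (and using that each comodule is a union of finite-dimensional ones, so the family has finite support on every matrix coefficient coalgebra), I recover cocentral functionals $\chi_g$ with $\chi_g*\chi_h=\delta_{g,h}\chi_g$ and $\sum_g\chi_g=\varepsilon$, and set $f(a)=\sum_g\chi_g(a)\,g$.

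Finally, for the Hopf algebra refinement I would simply match the three additional conditions pairwise on top of the coalgebra equivalence: reading off $f|_{A_g}=\varepsilon(\,\cdot\,)g$, the map $f$ is an algebra (equivalently bialgebra, equivalently Hopf) homomorphism iff $1\in A_e$ and $A_gA_h\subseteq A_{gh}$, which in turn is exactly the condition that the induced functor $\Corep(A)\to\Corep(K\Gamma)$ is monoidal, i.e.\ $\un\in\CC_e$ and $X\otimes Y\in\CC_{gh}$ for $X\in\CC_g$, $Y\in\CC_h$; compatibility of $f$ with the antipode is automatic for a bialgebra map between Hopf algebras, consistently with $[X^\vee]=[X]^{-1}$ and with $\Gamma$ being a group. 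I expect the main obstacle to be the conceptual heart of (i)$\Leftrightarrow$(iii), namely the precise equivalence ``the homogeneous pieces are subcomodules $\Leftrightarrow$ $f$ is cocentral'', which is what the idempotents $E^V_g$ and the reconstruction of $f$ are designed to pin down; the remaining verifications are routine Sweedler calculus.
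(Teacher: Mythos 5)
Your proposal is correct, and all of its nonobvious steps check out, but it is organized along a genuinely different route than the paper's proof, which establishes the single cycle (i)$\Rightarrow$(ii)$\Rightarrow$(iii)$\Rightarrow$(i). In the paper, the key inclusion $\Delta(A_g)\subset A_g\otimes A_g$ is obtained \emph{categorically} inside the step (i)$\Rightarrow$(ii): one defines $A_g$ as the set of elements generating subcomodules in $\CC_g$, observes that the operators $a\mapsto\phi(a_{(2)})a_{(1)}$ are left comodule maps, and invokes the absence of morphisms between $\CC_g$ and $\CC_h$; the steps (ii)$\Rightarrow$(iii) and (iii)$\Rightarrow$(i) are then the same short verifications you give. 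You instead prove the two biconditionals (ii)$\Leftrightarrow$(iii) and (i)$\Leftrightarrow$(iii), and your two distinctive ingredients are both sound: the $\Gamma\times\Gamma$-bigrading argument in (iii)$\Rightarrow$(ii) works exactly as you say (for $a\in A_{g,h}$, cocentrality yields $g\otimes a=f(a_{(1)})\otimes a_{(2)}=f(a_{(2)})\otimes a_{(1)}=h\otimes a$, so $a=0$ unless $g=h$, and coassociativity confines $\Delta(A_{g,g})$ to $A_{g,g}\otimes A_{g,g}$), giving a purely algebraic, self-contained derivation of the coalgebra-grading condition from cocentrality alone — an implication the paper never isolates; and your idempotents $E^V_g=\pi_V(\chi_g)$ in (iii)$\Rightarrow$(i) make explicit what the paper compresses into the remark that cocentrality makes the $K\Gamma$-grading respect the comodule structure. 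The one step you state as a black box — identifying natural comodule-endomorphisms of $\Id_{\Corep(A)}$ with central elements of $A^*$ in (i)$\Rightarrow$(iii) — is standard and easily made elementary here: naturality along the coactions $\delta\colon V\to C\otimes V$, with $C\subset A$ a finite dimensional subcoalgebra, forces $E^V_g=\pi_V(\chi_g)$ for $\chi_g=\eps E^A_g$, and the comodule-morphism property on subcomodules of $A$ is exactly the relation $\chi_g(a_{(1)})a_{(2)}=\chi_g(a_{(2)})a_{(1)}$; you also correctly flag the finite-support issue needed for $f(a)=\sum_g\chi_g(a)g$ to land in $K\Gamma$. What each approach buys: the paper's is shorter because the inclusion $\Delta(A_g)\subset A_g\otimes A_g$ comes for free from the categorical hypothesis, while yours makes the idempotent calculus $\chi_g*\chi_h=\delta_{g,h}\chi_g$ explicit — the structure that underlies the later manipulations with homogeneous components, e.g.\ in Proposition~\ref{prop:construction}. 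One small caution on the Hopf refinement: in the direction ``$f$ multiplicative $\Rightarrow A_gA_h\subset A_{gh}$'' you must use the coaction characterization $A_k=\{c\mid c_{(1)}\otimes f(c_{(2)})=c\otimes k\}$ (computing $a_{(1)}b_{(1)}\otimes f(a_{(2)})f(b_{(2)})=ab\otimes gh$), since evaluating $f(ab)=\eps(ab)gh$ alone does not locate $ab$ in $A_{gh}$; with that, your matching of the three extra conditions, including the automatic antipode compatibility, agrees with the paper.
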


\bp This is a routine verification, but let us present a brief argument for the reader's convenience.

Assume (i), so that we are given a full $\Gamma$-grading on $\CC=\Corep(A)$.
It is well-known that every element $a \in A$ generates a finite dimensional left subcomodule, namely, the linear span of $a \lhd \phi = \phi(a_{(1)}) a_{(2)}$ for $\phi\in A^*$. It follows that if we denote by $A_g$ the subspace of $A$ consisting of elements that generate subcomodules belonging to $\CC_g$, then $A=\oplus_{g\in\Gamma} A_g$.
 Moreover, for any $\phi \in A^*$, the map $a \mapsto \phi \rhd a = \phi(a_{(2)}) a_{(1)}$ is a homomorphism of left $A$-comodules. Since any $A$-comodule homomorphism of $A_g$ to $A_h$ is zero for $g \neq h$, we must have $\Delta(A_g)\subset A_g\otimes A_g$, and we obtain (ii).

Next, given a $\Gamma$-grading on $A$ as in (ii), we define $f\colon A\to K\Gamma$ by $f(a)=\eps(a)g$ for $a\in A_g$. Then $f$ is cocentral. Note that the grading on $A$ is recovered from $f$ by $A_g=\{a\in A\mid a_{(1)}\otimes f(a_{(2)})=a\otimes g\}$.

Finally, given a cocentral coalgebra homomorphism $f\colon A\to K\Gamma$, any comodule over $A$ can be considered as a comodule over $K\Gamma$, that is, as a $\Gamma$-graded vector space. The cocentrality assumption implies that this $\Gamma$-grading respects the $A$-comodule structure. Hence we obtain a $\Gamma$-grading on $\Corep(A)$.

\smallskip

The equivalence of (i$'$)--(iii$'$) can be argued in the same way. For example, from (ii$'$), when $a \in A_g$ and $b \in A_h$, one obtains $f(a) f(b) = \couni(a) \couni(b) g h= \couni(a b) g h = f(a b)$, hence $f$ is a bialgebra homomorphism, thus it is a Hopf algebra homomorphism.
\ep

Let us note that if any (or all) of the above conditions (i$'$)--(iii$'$) holds, the antipode~$S$ of~$A$ satisfies $S(A_g) = A_{g^{-1}}$ for each $g$, since it is the unique linear map satisfying $S(a_{(1)}) a_{(2)} = \couni(a) 1 = a_{(1)} S(a_{(2)})$ and we know that $1 \in A_e$. Moreover, Lemma~\ref{lem:ss-cat-chain-grading} states the following in this setup. If $\CC=\Corep(A)$ for a cosemisimple Hopf algebra $A$, we have a surjective cocentral Hopf algebra homomorphism $A\to K\Ch(\Corep(A))$, and any cocentral Hopf algebra homomorphism $A\to K\Gamma$ factors through it.

\subsection{Group actions on monoidal categories}
\label{sec:grp-act-mon-cat}

Given a group $\Gamma$, consider the monoidal category~$\underline{\Gamma}$ with objects the elements of $\Gamma$, no nontrivial morphisms, and with the tensor structure given by the product in $\Gamma$. Assume that $\CC$ is a monoidal $K$-linear category, and  denote by $\underline{\Aut}^\otimes(\CC)$ the monoidal category of monoidal autoequivalences of $\CC$, with the tensor structure given by the composition of monoidal functors. Then a \emph{(weak) action} of $\Gamma$ on $\CC$ is a tensor functor $\underline{\Gamma}\to \underline{\Aut}^\otimes(\CC)$, see, e.g.,~\cite{MR1815142}.

Spelling out the meaning of this definition, an action of $\Gamma$ on $\CC$ is defined by monoidal autoequivalences $\alpha^g$ of $\CC$ and natural monoidal isomorphisms  $\eta^{g,h}$ from $\alpha^g \alpha^h$ to $\alpha^{g h}$ such that
\begin{equation}
\label{eq:assoc-eta-cond}
\eta^{g, h k} \alpha^g(\eta^{h,k}) = \eta^{g h, k} \eta^{g,h}.
\end{equation}
This can be explicitly written as the equality of morphisms
$$
\eta^{g, h k}_X \alpha^g(\eta^{h,k}_X) = \eta^{g h, k}_X \eta^{g,h}_{\alpha^k (X)} \colon \alpha^g \alpha^h \alpha^k( X) \to \alpha^{g h k}(X).
$$
Note also that the monoidality of $\eta^{g,h}$ means commutativity of the following diagrams:
\begin{equation*}
\label{eq:eta-g-h-mon-transf}
\xymatrix@R=2em@C=-2em{
& \alpha^g \alpha^h(X \otimes Y) \ar[rr]^{\eta^{g, h}_{X\otimes Y}}  & & \alpha^{g h}(X \otimes Y) \\
\alpha^g(\alpha^h (X) \otimes \alpha^h (Y)) \ar[ur]^{\alpha^g(\alpha^h_2)} & & & &\alpha^{g h}(X) \otimes \alpha^{g h}(Y) \ar[ul]_{\alpha^{g h}_2},\\
& & \alpha^g \alpha^h(X) \otimes \alpha^g \alpha^h(Y) \ar[urr]_{\eta^{g,h}_X \otimes \eta^{g,h}_Y}\ar[ull]^{\alpha^g_2}
}
\end{equation*}
where $\alpha^g_2$ denotes the monoidal structure of $\alpha^g$.

We will denote an action by a pair $(\alpha,\eta)$ as above. Replacing the functor $\underline{\Gamma}\to \underline{\Aut}^\otimes(\CC)$ by a naturally monoidally isomorphic one, we may, and will, assume that  $\alpha^e = \Id_\CC$ and $\eta^{e, g}$ and $\eta^{g, e}$ are the identity isomorphisms. An action is called \emph{strict} if we have equality of monoidal autoequivalences $\alpha^g \alpha^h = \alpha^{g h}$ as well as the equalities $\eta^{g,h} = \iota$ for all $g$ and $h$. An example of  a strict action can be obtained by taking $\CC=\Corep(A)$ for a Hopf algebra $A$ and an action of $\Gamma$ on $A$ by Hopf algebra automorphisms.

\smallskip

By Tambara's result~\cite{MR1815142}, if $\Gamma$ acts on $\CC$, we can construct a new monoidal category $\CC \rtimes_{\alpha,\eta} \Gamma$ over $K$, analogous to the crossed product construction of Hopf algebras. The underlying linear category is the same as the Deligne product $\CC \boxtimes \CC_\Gamma$, where $\CC_\Gamma$ is the monoidal category over $K$ whose simple classes are given by the elements of $\Gamma$. In other words, $\CC_\Gamma=\Corep(K\Gamma)$ is the category of finite dimensional $\Gamma$-graded vector spaces. The monoidal structure of $\CC \boxtimes \CC_\Gamma$ is given by $(X \boxtimes g) \otimes (Y \boxtimes h) = (X \otimes \alpha^g(Y)) \boxtimes g h$ at the level of objects, and by $(T \boxtimes \iota_g) \otimes (S \boxtimes \iota_h) = (T \otimes \alpha^g(S)) \boxtimes \iota_{g h}$ for $T \in \CC(X, W)$ and $S \in \CC(Y, Z)$. The tensor unit is given by $\un \boxtimes e$, while the new associativity morphisms $\tilde{\Phi}$ are given by
\begin{equation}
\label{eq:crossed-prod-associator}
(\iota \otimes \alpha^g_2)(\iota\otimes(\iota\otimes \eta^{g,h}))^{-1}\Phi  \boxtimes \iota_{g h k}\colon (X \boxtimes g \otimes Y \boxtimes h) \otimes Z \boxtimes k \to X \boxtimes g \otimes (Y \boxtimes h \otimes Z \boxtimes k),
\end{equation}
where $\Phi$ denotes the associativity morphisms in $\CC$.

\smallskip

In what follows we will be particularly interested in actions $(\alpha,\eta)$ such that $\alpha^g$ is naturally monoidally isomorphic to the identity functor on $\CC$ for all $g\in\Gamma$. In this case choose such isomorphisms $\zeta^g\colon \alpha^g\to\Id_\CC$, with $\zeta^e=\iota$, and define natural monoidal automorphisms $\mu^{g,h}$ of~$\Id_\CC$~by
\begin{equation} \label{eq:cocycle}
\mu^{g,h}\zeta^{g h} \eta^{g,h} = \zeta^h \zeta^g.
\end{equation}

\begin{lemma}
The natural isomorphisms $\mu^{g,h}$ satisfy the cocycle identity
$$
\mu^{g,h}_X\mu^{gh,k}_X=\mu^{h,k}_X\mu^{g,hk}_X.
$$
\end{lemma}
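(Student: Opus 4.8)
The plan is to prove the identity componentwise, at an arbitrary object $X$, by computing the triple Godement composite $\zeta^k\zeta^h\zeta^g\colon\alpha^g\alpha^h\alpha^k\to\Id_\CC$ at $X$ in two ways, according to the two ways of associating $\alpha^g\alpha^h\alpha^k$ as $(\alpha^g\alpha^h)\alpha^k$ or as $\alpha^g(\alpha^h\alpha^k)$. Since every natural transformation in sight is invertible, it suffices to bring the two results into the form $(\,\cdot\,)\circ\zeta^{ghk}_X\circ W$ with the same invertible morphism $W\colon\alpha^g\alpha^h\alpha^k(X)\to\alpha^{ghk}(X)$ and then cancel the common factor $\zeta^{ghk}_X\circ W$ from the right.

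The basic inputs are the two component forms of a Godement composite that follow from naturality of $\zeta^g$, namely $(\zeta^h\zeta^g)_X=\zeta^h_X\circ\zeta^g_{\alpha^h(X)}=\zeta^g_X\circ\alpha^g(\zeta^h_X)$, and the reading of the defining relation \eqref{eq:cocycle} at $X$, namely $(\zeta^h\zeta^g)_X=\mu^{g,h}_X\circ\zeta^{gh}_X\circ\eta^{g,h}_X$. Associating as $(\alpha^g\alpha^h)\alpha^k$, I would substitute \eqref{eq:cocycle} for the pair $(g,h)$, then use naturality of $\eta^{g,h}$ at the morphism $\zeta^k_X\colon\alpha^k(X)\to X$ to rewrite $\eta^{g,h}_X\circ\alpha^g\alpha^h(\zeta^k_X)$ as $\alpha^{gh}(\zeta^k_X)\circ\eta^{g,h}_{\alpha^k(X)}$, and finally substitute \eqref{eq:cocycle} for the pair $(gh,k)$. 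This presents the triple composite at $X$ as $\mu^{g,h}_X\circ\mu^{gh,k}_X\circ\zeta^{ghk}_X\circ\bigl(\eta^{gh,k}_X\circ\eta^{g,h}_{\alpha^k(X)}\bigr)$.

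Associating instead as $\alpha^g(\alpha^h\alpha^k)$, I would substitute \eqref{eq:cocycle} for the pair $(h,k)$, apply $\alpha^g$, and use naturality of $\zeta^g$ at the morphism $\mu^{h,k}_X\colon X\to X$ to replace $\zeta^g_X\circ\alpha^g(\mu^{h,k}_X)$ by $\mu^{h,k}_X\circ\zeta^g_X$; after this the factor $\zeta^g_X\circ\alpha^g(\zeta^{hk}_X)$ is again a Godement composite to which \eqref{eq:cocycle} for the pair $(g,hk)$ applies. This presents the same triple composite as $\mu^{h,k}_X\circ\mu^{g,hk}_X\circ\zeta^{ghk}_X\circ\bigl(\eta^{g,hk}_X\circ\alpha^g(\eta^{h,k}_X)\bigr)$.

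It remains to observe that the two trailing $\eta$-words agree: this is exactly the associativity condition \eqref{eq:assoc-eta-cond} evaluated at $X$, so both equal one invertible $W$. Cancelling $\zeta^{ghk}_X\circ W$ on the right yields $\mu^{g,h}_X\circ\mu^{gh,k}_X=\mu^{h,k}_X\circ\mu^{g,hk}_X$, the desired identity; notice that the stated order of factors emerges automatically, and no commutativity among the automorphisms of $\Id_\CC$ is invoked. I expect the only real difficulty to be the bookkeeping — keeping horizontal and vertical composition apart and inserting the naturality of $\eta$ and of $\zeta$ at precisely the right places, so that the two computations differ only through the associativity \eqref{eq:assoc-eta-cond} of the action.
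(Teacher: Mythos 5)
Your proof is correct and is essentially the paper's own argument written out in full: the paper's proof simply says to multiply \eqref{eq:assoc-eta-cond} on the left by $\zeta^{ghk}$ and use the definition \eqref{eq:cocycle} of $\mu$, which, after rewriting both sides via \eqref{eq:cocycle} and the naturality of $\zeta$ and $\eta$, produces exactly your two expressions for the triple composite $\zeta^k\zeta^h\zeta^g$ at $X$ with the $\mu$-prefactors in the stated order. You have merely run the same computation in the opposite direction (starting from the triple Godement composite and ending at \eqref{eq:assoc-eta-cond} rather than starting there), with all naturality steps correctly placed.
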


\bp
This follows immediately by multiplying \eqref{eq:assoc-eta-cond} on the left by $\zeta^{ghk}$
and using the definition of $\mu$.
\ep

If $\CC$ is essentially small, so that $\Aut^\otimes(\Id_\CC)$ is a group, then this can be formulated by saying that $\mu=(\mu^{g,h})_{g,h}$ is an $\Aut^\otimes(\Id_\CC)$-valued $2$-cocycle on
$\Gamma$, where $\Aut^\otimes(\Id_\CC)$ is considered as a trivial $\Gamma$-module.

Our main example of this situation is as follows. Consider a Hopf algebra $A$.
We let $H^1(A)$ denote the group of algebra homomorphisms $A \rightarrow K$, with convolution product $(\mu * \nu)(a) = \mu(a_{(1)}) \nu(a_{(2)})$. Similarly, $H_\ell^1(A)$ denotes the subgroup of $H^1(A)$ formed by strongly central (or lazy) elements, i.e.,~those $\omega \in H^1(A)$ such that $\omega * \iota=\iota * \omega$. If $A$ is finite dimensional, so that $A^*$ is again a Hopf algebra, then $H^1(A)$ is the group of group-like elements in $A^*$ and $H_\ell^1(A)$ is the subgroup of group-like elements that are central in $A^*$.

Suppose that we are given a group homomorphism $\Gamma \rightarrow H^1(A)/ H^1_\ell(A)$. Lifting it to a map $\Gamma\to H^1(A)$ we get a pair $(\phi, \mu)$, where $\phi$ is a map
$\Gamma \rightarrow H^1(A)$, $g \mapsto \phi_g$, and   $\mu$ is a $2$-cocycle on~$\Gamma$ with values in the trivial $\Gamma$-module $H^1_\ell(A)$, such that
$$
\phi_e = \varepsilon, \quad \phi_g * \phi_h =\phi_{gh}* \mu(g,h)=\mu(g,h)*\phi_{gh}.
$$
Note that for $g \in \Gamma$, we have $\phi_g^{-1} = \phi_{g^{-1}} * \mu(g,g^{-1})^{-1}$, and $\mu(g,g^{-1})=\mu(g^{-1},g)$. Define an action of $\Gamma$ on $A$ by the Hopf algebra automorphisms
$$
\alpha_g(a) = \phi_g \lact a \ract \phi_g^{-1} = \phi_g^{-1}(a_{(1)}) a_{(2)} \phi_g(a_{(3)})=
\mu(g,g^{-1})(S(a_{(1)}))\phi_{g^{-1}}(a_{(2)}) a_{(3)} \phi_{g}(a_{(4)}),
$$
which depends only on the initial homomorphism $\Gamma \rightarrow H^1(A)/ H^1_\ell(A)$. We call such actions \emph{almost adjoint}.

We then get a strict action of $\Gamma$ on $\CC=\Corep(A)$, which we continue to denote by $\alpha$: given a corepresentation $\delta\colon V\to A\otimes V$, we let $\alpha^g(V,\delta)=(V,(\alpha_g\otimes\iota)\delta)$. The autoequivalences $\alpha^g$ are isomorphic to $\Id_\CC$. Namely, for every $X=(V,\delta)\in\CC$ consider the representation $\pi_X$ of the algebra $A^*$ on $V$ defined by $\pi_X(\omega)=(\omega\otimes\iota)\delta$. Then the morphisms $\zeta^g_X=\pi_X(\phi_g)\colon \alpha^g(X)\to X$ define a natural monoidal isomorphism $\zeta^g\colon \alpha^g\to\Id_\CC$. The corresponding cocycle $(\mu^{g,h})_{g,h}$ defined by \eqref{eq:cocycle} is given by
\begin{equation} \label{eq:cocycle2}
\mu^{g,h}_X=\pi_X(\mu(g,h)).
\end{equation}

\bigskip

\section{Graded twisting}\label{sec:grtw}

\subsection{Categorical formulation}

Let $(\alpha, \eta)$ be an action of $\Gamma$ on a $\Gamma$-graded monoidal $K$-linear category $\CC$.
We say that $(\alpha,\eta)$ is an \emph{invariant} action if each $\alpha^g$ preserves the homogeneous subcategories $\CC_h$ for all $h\in \Gamma$.

\begin{definition}
Given an invariant action $(\alpha, \eta)$ of $\Gamma$ on $\CC$, we denote by $\CC^{t,(\alpha, \eta)}$ the full monoidal $K$-linear subcategory of $\CC \rtimes_{\alpha,\eta} \Gamma$ obtained by taking direct sums of the objects $X \boxtimes g$ for $g \in \Gamma$ and $X \in \CC_g$, and call $\CC^{t,(\alpha, \eta)}$ the \emph{graded twisting of $\CC$ by the action of $\Gamma$}. For strict actions we write~$\CC^{t,\alpha}$.
\end{definition}

By construction, $\CC^{t,(\alpha, \eta)}$ is equivalent to $\CC$ as a $\Gamma$-graded $K$-linear category. Identifying $\CC$ and $\CC^{t,(\alpha,\eta)}$ as $K$-linear categories, we may express the twisted monoidal structure as an operation on~$\CC$ given by $X \otimes_{\alpha,\eta} Y = X \otimes \alpha^g(Y)$ if $X \in \CC_g$. However, in some cases instead of changing the monoidal structure we can change the associativity morphisms in $\CC$. Namely, we have the following result.

\begin{theorem} \label{thm:new-associator}
Assume an invariant action $(\alpha,\eta)$ of $\Gamma$ on $\CC$ is such that $\alpha^g\simeq\Id_\CC$ for all $g\in\Gamma$. Choose natural monoidal isomorphisms $\zeta^g\colon\alpha^g\to\Id_\CC$ and define a cocycle $\mu=(\mu^{g,h})_{g,h}$ by~\eqref{eq:cocycle}. Then
\begin{enumerate}
\item there are new associativity morphisms $\Phi^{\mu}$ on $\CC$ such that
$$
\Phi^{\mu} = (\iota\otimes(\iota \otimes \mu^{g,h}_Z))\Phi \colon (X \otimes Y) \otimes Z \to X \otimes (Y \otimes Z)
$$
for $X \in \CC_g$ and $Y \in \CC_h$;
\item there is a monoidal equivalence $F\colon\CC^{t,(\alpha,\eta)}\to (\CC,\Phi^\mu)$ defined by $F(X \boxtimes g) =X$ for $X\in\CC_g$, with the tensor structure
$$
F_2 \colon F(X \boxtimes g) \otimes F(Y \boxtimes h) \to F(X \boxtimes g \otimes Y \boxtimes h)
$$
represented by $(\iota \otimes \zeta^g)^{-1}\colon X \otimes Y  \to  X \otimes \alpha_g (Y)$ for $X \in \CC_g$ and $Y \in \CC_h$.
\end{enumerate}
\end{theorem}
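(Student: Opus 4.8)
The plan is to deduce both parts at once from Tambara's crossed product, so that no pentagon identity has to be checked by hand. Since $\CC\rtimes_{\alpha,\eta}\Gamma$ is monoidal and the action is invariant, the subcategory $\CC^{t,(\alpha,\eta)}$ is closed under the tensor product: for $X\in\CC_g$ and $Y\in\CC_h$ one has $X\boxtimes g\otimes Y\boxtimes h=(X\otimes\alpha^g(Y))\boxtimes gh$ with $X\otimes\alpha^g(Y)\in\CC_{gh}$ because $\alpha^g$ preserves the grading and $\CC$ is graded monoidal; it also contains the unit $\un\boxtimes e$. Hence $\CC^{t,(\alpha,\eta)}$ is a full monoidal subcategory, monoidal with the associator $\tilde\Phi$ of \eqref{eq:crossed-prod-associator}. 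As already observed, the functor $F$ of (ii) is an equivalence of $K$-linear categories; it is fully faithful since $\Hom_{\CC^{t,(\alpha,\eta)}}(X\boxtimes g,X'\boxtimes g')=\Hom_\CC(X,X')$ whenever $X\in\CC_g$, $X'\in\CC_{g'}$ (both sides vanishing for $g\neq g'$). I will therefore transport the monoidal structure of $\CC^{t,(\alpha,\eta)}$ along $F$.

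Equip $\CC$ with its given tensor product and with the natural isomorphism $F_2=(\iota\otimes\zeta^g)^{-1}$ of (ii), and take $F_0=\iota_\un$ (legitimate since $\un\boxtimes e\mapsto\un$ and the action is normalized). By \emph{transport of structure} along the fully faithful, essentially surjective functor $F$, there is a unique family $\Phi^\mu$ of isomorphisms on $\CC$ making $(F,F_2,F_0)$ a strong monoidal functor, namely the one solving the coherence square
$$
F(\tilde\Phi)\circ F_2\circ(F_2\otimes\iota)=F_2\circ(\iota\otimes F_2)\circ\Phi^{\mu},
$$
and, $F$ being an equivalence out of a monoidal category, this $\Phi^\mu$ inherits naturality together with the pentagon and triangle axioms. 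Thus $(\CC,\Phi^\mu)$ is monoidal and $F$ is a monoidal equivalence; in particular the cocycle identity for $\mu$ need not be re-verified. It then only remains to solve the square for $\Phi^\mu$ on homogeneous objects and to match it with the formula in~(i).

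Carrying out that computation on $X\in\CC_g$, $Y\in\CC_h$, $Z\in\CC_k$, I substitute $F(\tilde\Phi)$ from \eqref{eq:crossed-prod-associator} and the four instances of $F_2$. First the monoidality of $\zeta^g$, in the form $\zeta^g_{Y\otimes\alpha^h(Z)}\circ\alpha^g_2=\zeta^g_Y\otimes\zeta^g_{\alpha^h(Z)}$, absorbs the term $\alpha^g_2$; then naturality of $\Phi$, applied twice, slides the remaining $\zeta$'s past the old associator so that $\Phi_{X,Y,Z}$ appears with all corrections concentrated on the third tensor leg. Those corrections collapse via
$$
\zeta^h_Z\circ\zeta^g_{\alpha^h(Z)}\circ(\eta^{g,h}_Z)^{-1}\circ(\zeta^{gh}_Z)^{-1}=\mu^{g,h}_Z,
$$
whose proof is the heart of the matter: naturality of $\zeta^g$ applied to the morphism $\zeta^h_Z\colon\alpha^h(Z)\to Z$ gives $\zeta^h_Z\circ\zeta^g_{\alpha^h(Z)}=\zeta^g_Z\circ\alpha^g(\zeta^h_Z)$, and the defining relation \eqref{eq:cocycle} rewrites the latter as $\mu^{g,h}_Z\circ\zeta^{gh}_Z\circ\eta^{g,h}_Z$, after which the two outer factors cancel. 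This yields $\Phi^\mu=\Phi_{X,Y,Z}\circ(\iota_{X\otimes Y}\otimes\mu^{g,h}_Z)$, and one last use of naturality of $\Phi$ turns this into $(\iota_X\otimes(\iota_Y\otimes\mu^{g,h}_Z))\circ\Phi_{X,Y,Z}$, exactly the formula in~(i). For non-homogeneous objects $\Phi^\mu$ is the direct sum of these homogeneous components, which is consistent because there are no nonzero morphisms between distinct components.

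I expect the bookkeeping of the third paragraph to be the only genuine obstacle: one must track several occurrences of $\zeta$, $\eta$, $\alpha^g_2$ and $\Phi$ and apply naturality and monoidality in exactly the right order. All the conceptual content is packed into the single collapse identity displayed above, where the definition \eqref{eq:cocycle} of $\mu$ enters; everything else follows formally from transport of structure out of the already-monoidal crossed product $\CC\rtimes_{\alpha,\eta}\Gamma$.
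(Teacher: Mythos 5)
Your proposal is correct and takes essentially the same route as the paper: the paper likewise obtains (i) as a byproduct of (ii) (the pentagon being inherited from the monoidal crossed product along the equivalence $F$) and reduces (ii) to verifying the single coherence square \eqref{eq:associator1}, which is exactly your transported-structure equation. The three ingredients you isolate --- monoidality of $\zeta^g$ in the form $\zeta^g_{U\otimes V}\alpha^g_2=\zeta^g_U\otimes\zeta^g_V$, the defining relation \eqref{eq:cocycle} for $\mu^{g,h}$, and naturality of $\Phi$ --- are precisely the ones the paper's computation uses, in the same order.
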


\bp
Part (i) follows easily from the cocycle property of $\mu$, but it will also follow from the proof of~(ii) (cf.~\cite{MR1047964}). For part (ii), in turn, it suffices to check that $(F,F_2)$ transforms the associator on~$\CC^{t,(\alpha,\eta)}$ into $\Phi^\mu$. Recalling formula \eqref{eq:crossed-prod-associator} for the associator on $\CC\rtimes_{\alpha,\eta}\Gamma$, this amounts to verifying the equality
\begin{multline}\label{eq:associator1}
(\iota\otimes\alpha^g_2)(\iota\otimes(\iota\otimes\eta^{g,h}))^{-1}\Phi(\iota\otimes\zeta^{gh})^{-1}((\iota\otimes\zeta^g)^{-1}\otimes\iota)\\
=(\iota\otimes\zeta^g)^{-1}(\iota\otimes(\iota\otimes\zeta^h)^{-1})(\iota\otimes(\iota \otimes \mu^{g,h}_Z))\Phi
\end{multline}
of morphisms $(X\otimes Y)\otimes Z\to X\otimes\alpha^g(Y\otimes\alpha^h(Z))$ for $X\in\CC_g$, $Y\in\CC_h$, $Z\in\CC_k$.

The assumption that $\zeta^g$ is a natural isomorphism of monoidal functors implies that $(\zeta^g)^{-1}= \alpha^g_2(\zeta^g\otimes\zeta^g)^{-1}$. Using also that $\zeta^h\zeta^g=\mu^{g,h}\zeta^{gh}\eta^{g,h}$, we see that the right hand side of \eqref{eq:associator1} equals
$$
(\iota\otimes\alpha^g_2)(\iota\otimes(\iota\otimes\eta^{g,h}))^{-1}(\iota\otimes(\zeta^g\otimes \zeta^{gh}))^{-1}\Phi.
$$
But this equals the left hand side of \eqref{eq:associator1} by naturality of $\Phi$.
\ep

In the semisimple case the new associativity morphisms can be expressed in terms of usual $3$-cocycles as follows.

\begin{proposition} \label{prop:3-cocycle}
Let $\CC$ be as in the previous theorem. Moreover, assume that $\CC$ is essentially small, semisimple, with simple unit, such that $\End_\CC(U) = K$ for any simple object $U$. Let $q\colon \Ch(\CC)\to\Gamma$ denote the homomorphism corresponding to the $\Gamma$-grading on $\CC$. Then there is a well-defined cocycle $c\in Z^3(\Ch(\CC);K^\times)$ such that
$$
c(g,h,k) \iota_Z =\mu^{q(g),q(h)}_Z
$$
for any $g,h,k\in\Ch(\CC)$ and any simple object $Z\in\CC$ with $[Z]=k$.
\end{proposition}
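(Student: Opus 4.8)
The plan is to read off the scalar $c(g,h,k)$ directly from the identification of $\Aut^\otimes(\Id_\CC)$ with $\Hom(\Ch(\CC),K^\times)$ supplied by Proposition~\ref{prop:chain-grp-and-nat-aut-id-fuct-duality}, and then to deduce the cocycle relation from the $2$-cocycle identity for $\mu$. First I would recall that for each pair $g,h\in\Gamma$ the natural transformation $\mu^{g,h}$ lies in $\Aut^\otimes(\Id_\CC)$, so it corresponds to a homomorphism $\phi^{\mu^{g,h}}\colon\Ch(\CC)\to K^\times$ characterized by $\mu^{g,h}_Z=\phi^{\mu^{g,h}}([Z])\,\iota_Z$ for every simple object $Z$. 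I would therefore \emph{define}
$$
c(g,h,k)=\phi^{\mu^{q(g),q(h)}}(k)\qquad(g,h,k\in\Ch(\CC)).
$$
This makes sense for all $k\in\Ch(\CC)$, and it is immediate from the characterization of $\phi^{\mu^{q(g),q(h)}}$ that $c(g,h,k)\iota_Z=\mu^{q(g),q(h)}_Z$ whenever $Z$ is simple with $[Z]=k$; in particular the scalar on the right depends only on $k=[Z]$ and not on the chosen $Z$, which is exactly the well-definedness asserted in the statement. The normalization $\mu^{e,h}=\mu^{g,e}=\iota$ (from $\eta^{e,\cdot}=\eta^{\cdot,e}=\iota$ and $\zeta^e=\iota$ in \eqref{eq:cocycle}) moreover shows $c$ is normalized.

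It then remains to check that $c\in Z^3(\Ch(\CC);K^\times)$, for which there are two ingredients. First, for fixed $g,h$ the map $k\mapsto c(g,h,k)=\phi^{\mu^{q(g),q(h)}}(k)$ is a group homomorphism, so $c(g,h,kl)=c(g,h,k)\,c(g,h,l)$. Second, the $2$-cocycle identity for $\mu$ reads $\mu^{a,b}\mu^{ab,c}=\mu^{b,c}\mu^{a,bc}$ in the abelian group $\Aut^\otimes(\Id_\CC)$ for $a,b,c\in\Gamma$; since $q$ is a homomorphism, $q(gh)=q(g)q(h)$, and Proposition~\ref{prop:chain-grp-and-nat-aut-id-fuct-duality} turns the product in $\Aut^\otimes(\Id_\CC)$ into the pointwise product of the associated characters. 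Evaluating the identity with $a=q(g)$, $b=q(h)$, $c=q(k)$ at the element $l\in\Ch(\CC)$ thus gives
$$
c(g,h,l)\,c(gh,k,l)=c(h,k,l)\,c(g,hk,l).
$$
Substituting the homomorphism relation $c(g,h,kl)^{-1}c(g,h,k)=c(g,h,l)^{-1}$ into the standard coboundary
$$
c(h,k,l)\,c(gh,k,l)^{-1}\,c(g,hk,l)\,c(g,h,kl)^{-1}\,c(g,h,k),
$$
I would reduce it to $c(h,k,l)\,c(gh,k,l)^{-1}\,c(g,hk,l)\,c(g,h,l)^{-1}$, which is $1$ by the displayed relation (using commutativity of $K^\times$). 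Hence $c$ is a $3$-cocycle.

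I do not expect a genuine obstacle here: the entire content is carried by Proposition~\ref{prop:chain-grp-and-nat-aut-id-fuct-duality} (which makes $\mu^{q(g),q(h)}$ act by a scalar on simple objects and be multiplicative in the third variable) together with the $2$-cocycle identity for $\mu$ (which supplies the relation in the first two variables). The one point that genuinely requires care is to resist defining $c$ by the scalar equation $c(g,h,k)\iota_Z=\mu^{q(g),q(h)}_Z$ alone, since a general $k\in\Ch(\CC)$ need not be the class of a single simple object; taking $c(g,h,k)=\phi^{\mu^{q(g),q(h)}}(k)$ extends the definition to all of $\Ch(\CC)$ while still agreeing with the scalar equation wherever such a $Z$ exists. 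As an independent sanity check one may verify the same relation by evaluating the pentagon axiom for the associator $\Phi^\mu$ of Theorem~\ref{thm:new-associator} on simple objects, which holds because $\Phi^\mu$ is a genuine associator; this exhibits $c$ as the $3$-cocycle describing the associativity constraint of $(\CC,\Phi^\mu)$ on the graded pieces.
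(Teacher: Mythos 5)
Your proposal is correct and follows essentially the same route as the paper: well-definedness of $c$ comes from Proposition~\ref{prop:chain-grp-and-nat-aut-id-fuct-duality} exactly as in the paper's proof (which also defines $c(g,h,\cdot)$ as the character attached to $\mu^{q(g),q(h)}$), and your direct verification of the cocycle identity, combining multiplicativity in the third variable with the $2$-cocycle identity for $\mu$ pushed through $q$, is precisely the check the paper asserts ``can be checked directly,'' with the pentagon relation for $\Phi^\mu$ (which you mention as a sanity check) being the paper's stated alternative.
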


\bp By Proposition~\ref{prop:chain-grp-and-nat-aut-id-fuct-duality}, for any $s,t\in\Gamma$ we can define a homomorphism $\Ch(\CC)\to K^\times$ by $[Z]\mapsto\mu^{s,t}_Z$. This shows that the map $c\colon\Ch(\CC)^3\to K^\times$ in the formulation is well-defined. The cocycle identity for $c$ can be checked directly, but it also follows from the pentagon relation for~$\Phi^\mu$, as
$$
\Phi^\mu=c(g,h,k)\Phi\colon (X\otimes Y)\otimes Z\to X\otimes (Y\otimes Z)
$$
for $[X]=g$, $[Y]=h$ and $[Z]=k$.
\ep

\begin{remark}
The cocycle $c$ in the previous proposition does not necessarily descend to $\Gamma$. Nevertheless, if $\Gamma$ is abelian, then $c$ at least descends to the abelianization of $\Ch(\CC)$, since the homomorphism $\Ch(\CC)\to K^\times$, $[Z]\mapsto\mu^{s,t}_Z$, in the above proof factors through $\Ch(\CC)^{\abel}$.
\end{remark}

\begin{remark}
Assume $(\alpha,\eta)$ is an action of $\Gamma$ on a $\Gamma$-graded monoidal $K$-linear category $\CC$.
Let us say that the action is \emph{equivariant} if $\alpha^g(\CC_h) = \CC_{g h g^{-1}}$ holds for any $g, h \in \Gamma$. If $\alpha$ is strict, then~$\CC$ is called a crossed $\Gamma$-category over $K$~\cite{MR2674592}*{Section~VI}. For any equivariant action we have $X \otimes \alpha^{g^{-1}}(Y) \in \CC_{g g^{-1} h g} = \CC_{h g}$, which implies that $X \boxtimes g^{-1} \otimes Y \boxtimes h^{-1}$ belongs to $\CC_{h g} \boxtimes (h g)^{-1}$. Thus, the objects of the form $X \boxtimes g^{-1}$ for $X \in \CC_g$ span a monoidal subcategory $\CC^{a,(\alpha,\eta)}$, which is equivalent to $\CC$ as a $\Gamma$-graded $K$-linear category. We note that if $\Gamma$ is abelian, then the notions of invariant and equivariant actions are the same, but the monoidal categories $\CC^{a,(\alpha,\eta)}$ and $\CC^{t,(\alpha,\eta)}$ are not equivalent in general, see Section~\ref{sec:abelian-groups}.
\end{remark}

\subsection{Hopf algebraic formulation}
\label{ssec:hopf-alg-formul}

Let $A$ be a Hopf algebra. Assume we are given a $\Gamma$-grading on $A$ such that $\Delta(A_g)\subset A_g\otimes A_g$ and $A_gA_h\subset A_{gh}$ for all $g,h\in\Gamma$. An action $\alpha$ of $\Gamma$ on $A$ by Hopf algebra automorphisms is called \emph{invariant} if $\alpha_g(A_h)=A_h$ for all $g,h\in\Gamma$.

By Lemma~\ref{lem:gradings} these assumptions can be formulated by saying that we are given a cocentral Hopf algebra homomorphism $p\colon A\to K\Gamma$ and an action $\alpha$ of $\Gamma$ on $A$ such that $p\alpha_g=p$ for all $g\in\Gamma$. We then say that the pair $(p,\alpha)$ is an \emph{invariant cocentral action} of $\Gamma$ on $A$. Any such action defines a strict invariant action of $\Gamma$ on $\Corep(A)$ and therefore we get a twisted category $\Corep(A)^{t,\alpha}$. We want to define a Hopf algebra of which $\Corep(A)^{t,\alpha}$ is the corepresentation category.

Consider the crossed product $A \rtimes_\alpha \Gamma$. We realize it as the tensor product $A \otimes K\Gamma$ with the twisted product $(a \otimes g) (b \otimes h) = a \alpha_g(b) \otimes g h$. It is a Hopf algebra with the tensor product of original coproducts $\Delta(a \otimes g)= a_{(1)} \otimes g \otimes a_{(2)} \otimes g$, the counit $\couni(a \otimes g) = \couni(a)$, and the antipode $S(a \otimes g) = S(\alpha_{g^{-1}} (a)) \otimes g^{-1}$.

From this presentation it is straightforward that we have a monoidal equivalence $$\Corep(A \rtimes_\alpha \Gamma)\simeq\Corep(A) \rtimes_{\alpha} \Gamma.$$
It is easy to see that the full monoidal subcategory $\Corep(A)^{t,\alpha}\subset \Corep(A) \rtimes_{\alpha} \Gamma$ is rigid. It follows that the matrix coefficients of corepresentations of $A \rtimes_\alpha \Gamma$ lying in $\Corep(A)^{t,\alpha}$ span a Hopf subalgebra of $A \rtimes_\alpha \Gamma$ with corepresentation category $\Corep(A)^{t,\alpha}$. We thus arrive at the following definition.

\begin{definition}
Given an invariant cocentral action $(p,\alpha)$ of $\Gamma$ on a Hopf algebra $A$, \emph{the graded twisting of $A$ by the action of $\Gamma$} is the Hopf subalgebra $A^{t,(p, \alpha)}$ of $A \rtimes_\alpha \Gamma$ spanned by the elements of the form $a\otimes g$ with $a\in A_g=\{a\in A\mid a_{(1)}\otimes p( a_{(2)})=a\otimes g\}$ and $g\in\Gamma$.
If $p$ is unambiguous from the context, we will also denote this Hopf algebra by $A^{t,\alpha}$.
\end{definition}

By construction we have $\Corep(A^{t,\alpha})\simeq\Corep(A)^{t,\alpha}$, but of course no categorical considerations are needed just to define $A^{t,\alpha}$. Observe that by working with homogeneous components we immediately see that $A^{t,(p, \alpha)}\subset A \rtimes_\alpha \Gamma$ is a sub-bialgebra, while the fact that it is closed under the antipode follows from $S(A_g)\subset A_{g^{-1}}$.

The following description of $A^{t,\alpha}$ is very useful.

\begin{proposition}\label{prop:construction}
The crossed product $A \rtimes_\alpha \Gamma$ becomes a $K\Gamma$-comodule by the coaction
\begin{equation}\label{eq:coact-K-Gamma-on-inv-cocentr-act}
 A \rtimes_\alpha \Gamma \rightarrow  (A \rtimes_\alpha \Gamma) \otimes K\Gamma,\quad
 a \otimes g \mapsto a_{(1)} \otimes g \otimes p(a_{(2)})g^{-1},
\end{equation}
the space $(A \rtimes_\alpha \Gamma)^{\co K \Gamma}$ of invariant elements (that is, elements $x$ satisfying $x \mapsto x \otimes 1$ for the above coaction) coincides with $A^{t,\alpha}$, and the map
\begin{equation*}
j\colon A \rightarrow  A^{t,\alpha}=(A \rtimes_\alpha \Gamma)^{\co K \Gamma}, \quad
a \mapsto a_{(1)} \otimes p(a_{(2)})
\end{equation*}
is a coalgebra isomorphism. Moreover, the map $\tilde{p} = \varepsilon \otimes \iota \colon a \otimes g \mapsto \varepsilon(a) g$ restricts to a cocentral Hopf algebra homomorphism $A^{t,\alpha}\to K \Gamma$ satisfying $\tilde{p}j=p$.
\end{proposition}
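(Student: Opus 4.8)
The plan is to exploit the $\Gamma$-grading $A=\oplus_{h\in\Gamma}A_h$ throughout and to reduce every claim to its effect on homogeneous elements. For $a\in A_h$ we have $p(a)=\varepsilon(a)h$, so the proposed coaction takes the simple form $\delta(a\otimes g)=(a\otimes g)\otimes hg^{-1}$; in other words, $\delta$ is nothing but the $K\Gamma$-grading on $A\rtimes_\alpha\Gamma$ that assigns to a homogeneous element $a\otimes g$ (with $a\in A_h$) the degree $hg^{-1}$. First I would record this reformulation, after which the comodule axioms are immediate and can be checked on homogeneous elements and extended by linearity: both $(\delta\otimes\iota)\delta$ and $(\iota\otimes\Delta_{K\Gamma})\delta$ send $a\otimes g$ to $(a\otimes g)\otimes hg^{-1}\otimes hg^{-1}$ (using $\Delta_{K\Gamma}(hg^{-1})=hg^{-1}\otimes hg^{-1}$), and the counit axiom reduces to $\varepsilon_{K\Gamma}(hg^{-1})=1$.

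With the grading picture in hand, computing the coinvariants is direct. Writing an arbitrary $x\in A\rtimes_\alpha\Gamma$ as $x=\sum_{g,h}(a^{(g)}_h\otimes g)$ with $a^{(g)}_h\in A_h$, the equation $\delta(x)=x\otimes e$ forces $a^{(g)}_h=0$ whenever $hg^{-1}\ne e$, i.e.\ the only surviving terms are those with $h=g$; hence $(A\rtimes_\alpha\Gamma)^{\co K\Gamma}$ is exactly the span of the elements $a\otimes g$ with $a\in A_g$, which is $A^{t,\alpha}$. For the map $j$, I would observe that $j=(\iota\otimes p)\Delta$ is a composite of coalgebra homomorphisms (the coalgebra structure on $A\rtimes_\alpha\Gamma$ is the untwisted tensor coalgebra $A\otimes K\Gamma$, since the crossed product twists only the product), hence itself a coalgebra homomorphism; on a homogeneous $a\in A_h$ it yields $j(a)=a\otimes h$, which lands in $A^{t,\alpha}$ and restricts on each graded piece to the isomorphism $A_h\xrightarrow{\sim}A_h\otimes h$. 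Since $A^{t,\alpha}=\bigoplus_h (A_h\otimes h)$ as a vector space, $j$ is a coalgebra isomorphism onto $A^{t,\alpha}$.

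Finally, $\tilde p=\varepsilon\otimes\iota$ is a Hopf algebra homomorphism on all of $A\rtimes_\alpha\Gamma$: it is multiplicative because $\varepsilon\circ\alpha_g=\varepsilon$, and it is a coalgebra map as the tensor product of the coalgebra maps $\varepsilon\colon A\to K$ and $\iota\colon K\Gamma\to K\Gamma$; being a Hopf subalgebra, $A^{t,\alpha}$ inherits the homomorphism by restriction. The identity $\tilde pj=p$ is checked on $a\in A_h$, where both sides equal $\varepsilon(a)h$, and cocentrality of $\tilde p$ on $A^{t,\alpha}$ follows because for $x=a\otimes g$ ($a\in A_g$) both legs of $\Delta(x)=(a_{(1)}\otimes g)\otimes(a_{(2)}\otimes g)$ carry the same group label $g$, so $\tilde p(x_{(1)})\otimes x_{(2)}=g\otimes(a\otimes g)=\tilde p(x_{(2)})\otimes x_{(1)}$. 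The one point that needs care is that $\delta$ is \emph{not} multiplicative for nonabelian $\Gamma$ (a short computation gives the discrepancy between $hk l^{-1}g^{-1}$ and $hg^{-1}kl^{-1}$), so the general principle that coinvariants of a comodule algebra form a subalgebra is unavailable; this is harmless here, since $A^{t,\alpha}$ is already known to be a Hopf subalgebra and we only use the explicit set-level identification of the coinvariants established above.
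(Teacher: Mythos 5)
Your proposal follows exactly the route the paper intends: the paper's entire proof reads ``this is easily verified by working with homogeneous components of $A$,'' and your reformulation of the coaction as the $K\Gamma$-grading giving $a\otimes g$ degree $hg^{-1}$ for $a\in A_h$, the comodule axioms, the identification of the coinvariants with the span of the $a\otimes g$, $a\in A_g$, the identity $\tilde p\,j=p$, the cocentrality computation, and the (correct and worthwhile) warning that $\delta$ is not multiplicative for nonabelian $\Gamma$ are all accurate and complete.

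One sub-step is, however, justified by a false general principle: you assert that $j=(\iota\otimes p)\Delta$ is a coalgebra homomorphism ``as a composite of coalgebra homomorphisms.'' While $\iota\otimes p\colon A\otimes A\to A\otimes K\Gamma$ is indeed a coalgebra map, $\Delta\colon A\to A\otimes A$ (with the tensor-product coalgebra structure on the target) is a coalgebra homomorphism only when $A$ is cocommutative: one has $\Delta_{A\otimes A}\Delta(a)=a_{(1)}\otimes a_{(3)}\otimes a_{(2)}\otimes a_{(4)}$ whereas $(\Delta\otimes\Delta)\Delta(a)=a_{(1)}\otimes a_{(2)}\otimes a_{(3)}\otimes a_{(4)}$, and applying $\varepsilon\otimes\iota\otimes\iota\otimes\varepsilon$ shows that equality forces cocommutativity. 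What actually makes $j$ a coalgebra map is the cocentrality of $p$, equivalently the property $\Delta(A_h)\subset A_h\otimes A_h$ of the grading. The repair is one line within the framework you already use everywhere else: for $a\in A_h$ one has $a_{(1)},a_{(2)}\in A_h$, hence $\Delta_{A\rtimes_\alpha\Gamma}(j(a))=(a_{(1)}\otimes h)\otimes(a_{(2)}\otimes h)=(j\otimes j)\Delta(a)$, together with $(\varepsilon\otimes\varepsilon_{K\Gamma})j=\varepsilon$; combined with your observation that $j$ restricts to bijections $A_h\to A_h\otimes h$ onto the graded pieces of $A^{t,\alpha}$, this yields the coalgebra isomorphism, and with this fix your argument coincides with the paper's intended proof.
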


\begin{proof}
This is easily verified by working with homogeneous components of $A$.
\end{proof}

We now describe the graded twisting operation in terms of \emph{pseudo-$2$-cocycles}, when the action is almost adjoint. Recall that a linear map $\sigma \in (A \otimes A)^*$ is said to be a pseudo-$2$-cocycle (see \cite{MR1395206} for an equivalent formulation) if it is convolution invertible, the product
$$
A \times A \ni (a,b) \mapsto \sigma(a_{(1)},b_{(1)}) \sigma^{-1}(a_{(3)},b_{(3)}) a_{(2)}b_{(2)} \in A
$$
is associative with the initial $1$ still being the unit, and  the bialgebra $A^\sigma$ equipped with the initial coproduct and the new product is a Hopf algebra.
The pseudo-$2$-cocycle $\sigma$ defines then a quasi-tensor equivalence (tensor equivalence ``minus compatibility with the associators'') between $\Corep(A)$ and $\Corep(A ^\sigma)$, which is a tensor equivalence if and only if $\sigma$ is a $2$-cocycle in the usual sense~\cite{MR1213985}, see, e.g.,~\citelist{\cite{MR1047964}\cite{MR1408508}}.

Assume that we have an almost adjoint action $(p, \alpha)$ on $A$ as at the end of Section~\ref{sec:grp-act-mon-cat}. Thus~$\alpha$~is defined by a pair $(\phi,\mu)$ consisting of a map $\phi\colon \Gamma\to H^1(A)$ and a $2$-cocycle $\mu\colon\Gamma^2\to H^1_\ell(A)$. Recall that $\mu$ can be considered as a cocycle with values in $\Aut^\otimes(\Id_{\Corep(A)})$ by \eqref{eq:cocycle2}. Consider the associativity morphisms $\Phi^\mu$ on $\Corep(A)$ defined by $\mu$ as described in Theorem~\ref{thm:new-associator}. By that theorem we have a monoidal equivalence $\Corep(A^{t,\alpha})\simeq(\Corep(A),\Phi^\mu)$. From this get a fiber functor $\tilde F\colon(\Corep(A),\Phi^\mu)\to\Vect_K$ which is the forgetful functor on morphisms and objects, while its tensor structure $\tilde F_2\colon \tilde F(X)\otimes \tilde F(Y)\to \tilde F(X\otimes Y)$ is given by $\iota_X\otimes\pi_Y(\phi_g)$ for $X\in\Corep(A)_g$. It follows that $\tilde F_2^{-1}$ defines a pseudo-$2$-cocycle on $A$ such that the twisting of~$A$ by this pseudo-cocycle gives~$A^{t,\alpha}$. Let us formulate this more precisely.

Extend the maps $\phi_g$ and $\phi^{-1}_g$ on $\Gamma$ to $K\Gamma$ by linearity, so that $\phi^{-1}_x = \sum_g x_g \phi^{-1}_g$ for $x = \sum_g x_g g$ in $K\Gamma$. Similarly, extend $\mu$ by bilinearity to a map $K\Gamma\otimes K\Gamma\to H^1_\ell(A)$ . Define
\begin{equation} \label{eq:pseudo-cocycle}
 \sigma\colon A \otimes A \rightarrow K, \quad a \otimes b \mapsto \phi^{-1}_{p(a)}(b)  =\mu(p(a_{(1)}),S(p(a_{(2)})))(S(b_{(1)}))\phi_{S(p(a_{(3)}))}(b_{(2)}).
\end{equation}

\begin{theorem} \label{thm:pseudo-2-cocycle}
Assume an almost adjoint invariant cocentral action $(p,\alpha)$ of $\Gamma$ on $A$ is defined by a pair $(\phi,\mu)$ as above. Then
\begin{enumerate}
\item the element $\sigma\in(A\otimes A)^*$ defined by \eqref{eq:pseudo-cocycle} is a pseudo-$2$-cocycle on $A$, with the convolution inverse given~by
\begin{equation*}
 \sigma^{-1}\colon A \otimes A \rightarrow K, \quad a \otimes b \mapsto \phi_{p(a)}(b);
\end{equation*}
\item the map $j\colon A\to A \rtimes_\alpha \Gamma$ defines a Hopf algebra isomorphism of the twist $A^\sigma$ of $A$ by $\sigma$ onto $A^{t,\alpha}\subset A \rtimes_\alpha \Gamma$.
\end{enumerate}
\end{theorem}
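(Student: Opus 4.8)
The plan is to prove both parts simultaneously by showing that the coalgebra isomorphism $j$ of Proposition~\ref{prop:construction} is \emph{also} multiplicative from the $\sigma$-twisted product to the product of $A^{t,\alpha}$, and then to read off everything from the fact that $A^{t,\alpha}$ is already known to be a Hopf algebra. Conceptually, $\sigma$ is nothing but the inverse tensor structure $\tilde F_2^{-1}$ of the fiber functor $\tilde F$ on $(\Corep(A),\Phi^\mu)$ constructed just before the statement, which is why it must be a pseudo-$2$-cocycle producing $A^{t,\alpha}$; but a direct verification is shorter and self-contained, and that is the route I would take.

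First I would dispose of the convolution-inverse claim. Since $A=\bigoplus_g A_g$ with $\Delta(A_g)\subset A_g\otimes A_g$, it suffices to evaluate on homogeneous $a\in A_g$, where $\sigma(a\otimes b)=\varepsilon(a)\phi_g^{-1}(b)$ and the proposed inverse is $\varepsilon(a)\phi_g(b)$. Then
$$
(\sigma*\sigma^{-1})(a\otimes b)=\varepsilon(a)\,(\phi_g^{-1}*\phi_g)(b)=\varepsilon(a)\varepsilon(b),
$$
and symmetrically for $\sigma^{-1}*\sigma$, using only that $\phi_g^{-1}$ is the convolution inverse of $\phi_g$ in $H^1(A)$. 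This gives convolution invertibility.

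The heart of the argument is the identity, valid for $a\in A_g$ and arbitrary $b$, for the $\sigma$-twisted product $a\cdot_\sigma b:=\sigma(a_{(1)},b_{(1)})\sigma^{-1}(a_{(3)},b_{(3)})a_{(2)}b_{(2)}$:
$$
\sigma(a_{(1)},b_{(1)})\,\sigma^{-1}(a_{(3)},b_{(3)})\,a_{(2)}b_{(2)}=a\,\alpha_g(b).
$$
This follows by inserting $\sigma(a_{(1)},b_{(1)})=\varepsilon(a_{(1)})\phi_g^{-1}(b_{(1)})$ and $\sigma^{-1}(a_{(3)},b_{(3)})=\varepsilon(a_{(3)})\phi_g(b_{(3)})$, collapsing the $a$-legs via $\varepsilon(a_{(1)})a_{(2)}\varepsilon(a_{(3)})=a$, and recognizing $\phi_g^{-1}(b_{(1)})b_{(2)}\phi_g(b_{(3)})=\alpha_g(b)$. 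For homogeneous $a\in A_g$, $b\in A_h$ the right-hand side lies in $A_{gh}$, so applying $j$ gives
$$
j\big(a\cdot_\sigma b\big)=a\alpha_g(b)\otimes gh=(a\otimes g)(b\otimes h)=j(a)j(b),
$$
and bilinearity extends multiplicativity of $j$ to all of $A$.

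It then remains to assemble the pieces, and here lies the only real subtlety. By Proposition~\ref{prop:construction}, $j$ is a linear isomorphism of $A$ onto the Hopf algebra $A^{t,\alpha}$ carrying the coproduct of $A$ to that of $A^{t,\alpha}$, with $j(1)=1\otimes e$ equal to the unit; transporting the associative unital product of $A^{t,\alpha}$ back along $j$ therefore yields exactly $\cdot_\sigma$. Hence $\cdot_\sigma$ is associative with unit $1$ and $(A,\Delta,\cdot_\sigma)$ is a Hopf algebra isomorphic to $A^{t,\alpha}$, which is precisely the assertion that $\sigma$ is a pseudo-$2$-cocycle with $A^\sigma=(A,\Delta,\cdot_\sigma)$, proving~(i); part~(ii) is the resulting bialgebra isomorphism $j\colon A^\sigma\to A^{t,\alpha}$, automatically compatible with the antipodes by their uniqueness. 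The equality of the two displayed formulas for $\sigma$ comes from $\phi_g^{-1}=\phi_{g^{-1}}*\mu(g,g^{-1})^{-1}$ together with the laziness of $\mu(g,g^{-1})$. The main obstacle is organizational rather than computational: one should resist verifying associativity of $\cdot_\sigma$ by hand and instead inherit it through the bijection $j$ from the already-built Hopf algebra $A^{t,\alpha}$, the only genuine care being the bookkeeping with the linear extension of $\phi$ and $\mu$ to $K\Gamma$ and the reduction to homogeneous components.
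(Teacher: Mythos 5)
Your proof is correct, and much of its skeleton coincides with the paper's: the same convolution-inverse computation on homogeneous elements, the same key identity $a\cdot_\sigma b=a\,\alpha_g(b)$ for $a\in A_g$, and the same conclusion that $j$ induces a bialgebra isomorphism $A^\sigma\simeq A^{t,\alpha}$, from which the Hopf property of $A^\sigma$ follows. Where you genuinely diverge is the associativity step. The paper verifies it directly, by computing the associator defect of $\sigma$, namely the identity
$$
\sigma^{-1}(a_{(1)},b_{(1)}c_{(1)})\,\sigma^{-1}(b_{(2)},c_{(2)})\,\sigma(a_{(2)},b_{(3)})\,\sigma(a_{(3)}b_{(4)},c_{(3)})=\mu(p(a),p(b))(c),
$$
which is Equation~\eqref{eq:associator2}, and then observing that the right-hand side is a central $3$-form by cocentrality of $p$ and centrality of $\mu(g,h)$; you instead inherit associativity, unitality, and the bialgebra axiom wholesale by transporting the structure of $A^{t,\alpha}$ back along the coalgebra isomorphism $j$ of Proposition~\ref{prop:construction}. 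This transport argument is legitimate and non-circular: the paper establishes by direct inspection of homogeneous components, prior to and independently of this theorem, that $A^{t,\alpha}$ is a Hopf subalgebra of $A\rtimes_\alpha\Gamma$ and that $j$ is a coalgebra isomorphism, and a bijection that is simultaneously a coalgebra map and multiplicative for $\cdot_\sigma$ forces $(A,\cdot_\sigma,\Delta)$ to be a Hopf algebra with the antipode intertwined automatically. The trade-off is that your route, while shorter and cleaner for the statement as given, does not produce the explicit identity~\eqref{eq:associator2}, which in the paper is not a throwaway byproduct: it is exactly what is invoked in the remark following the theorem (characterizing when $\sigma$ is an honest $2$-cocycle, namely when $\mu$ is trivial) and it reappears as identity~\eqref{eq:associator3} in the compact quantum group setting, where it underpins Theorem~\ref{thm:3coc-2pseudo}. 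So your proof fully establishes parts (i) and (ii), but a reader relying on it would still have to carry out the paper's computation separately to obtain the quantitative relation between the defect of $\sigma$ and the cocycle $\mu$ needed later.
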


\bp The theorem follows from the preceding discussion, but let us give an explicit argument.

\smallskip

Denote by $\sigma'$ the map defined in (i). Recall that if $a \in A_g$, we have $p(a) = \couni(a) g$. For any $b \in A$, we thus have
$$
(\sigma * \sigma') (a \otimes b) = \phi^{-1}_{p(a_{(1)})}(b_{(1)}) \phi_{p(a_{(2)})}(b_{(2)}) = \couni(a) \phi^{-1}_g(b_{(1)}) \phi_g(b_{(2)}) = \couni(a) \couni(b).
$$
By linearity in $a$, we obtain that $\sigma * \sigma'$ is equal to $\couni \otimes \couni$. By a similar argument we also obtain $\sigma'*\sigma = \varepsilon \otimes \varepsilon$, hence $\sigma' = \sigma^{-1}$ as claimed.

\smallskip

Next, we claim that for any elements $a$, $b$, and $c$ of $A$, we have
\begin{equation} \label{eq:associator2}
\sigma^{-1}(a_{(1)},b_{(1)} c_{(1)}) \sigma^{-1}(b_{(2)},c_{(2)})\sigma(a_{(2)},b_{(3)}) \sigma(a_{(3)} b_{(4)},c_{(3)})=
\mu(p(a),p(b))(c).
\end{equation}
First notice that $\phi_x(b c) = \phi_{x_{(1)}}(b) \phi_{x_{(2)}}(c)$ holds for $x \in K\Gamma$, which is by definition true if $x$ is actually in $\Gamma$, and in general follows from that by linearity. This implies
\begin{multline*}
\sigma^{-1}(a,b_{(1)} c_{(1)}) \sigma^{-1}(b_{(2)},c_{(2)}) = \phi_{p(a)}(b_{(1)} c_{(1)}) \phi_{p(b_{(2)})}(c_{(2)}) \\= \phi_{p(a_{(1)})}(b_{(1)})\phi_{p(a_{(2)})}(c_{(1)})\phi_{p(b_{(2)})}(c_{(2)}).
\end{multline*}
Next, the cocycle property of $\mu$ can be expressed as $\phi_x * \phi_y = \phi_{x_{(1)} y_{(1)}} * \mu(x_{(2)}, y_{(2)})$, which is again easy to see on group elements. Thus, the above expression is equal to
$$
\phi_{p(a_{(1)})}(b_{(1)})\phi_{p(a_{(2)} b_{(2)})}(c_{(1)})\mu(p(a_{(3)}), p(b_{(3)}))(c_{(2)}),
$$
which by the cocentrality of $p$ is equal to
\begin{multline*}
 \mu(p(a_{(1)}), p(b_{(1)}))(c_{(1)}) \phi_{p(a_{(3)}b_{(2)})}(c_{(2)}) \phi_{p(a_{(2)})}(b_{(3)}) \\
 = \mu(p(a_{(1)}), p(b_{(1)}))(c_{(1)}) \sigma^{-1}(a_{(2)}b_{(2)},c_{(2)}) \sigma^{-1}(a_{(3)},b_{(3)}).
\end{multline*}
This clearly implies \eqref{eq:associator2}.

\smallskip

Recall that the associativity condition for the twisted product in the definition of a  pseudo-$2$-cocycle means that, although the right hand side of \eqref{eq:associator2} is not $\eps(abc)$, it defines a central $3$-form, that is,
$$
\mu(p(a_{(1)}),p(b_{(1)}))(c_{(1)})a_{(2)}b_{(2)}c_{(2)}=\mu(p(a_{(2)}),p(b_{(2)}))(c_{(2)})a_{(1)}b_{(1)}c_{(1)}.
$$
But this is an immediate consequence of the cocentrality of $p$ and the centrality of $\mu(g,h)$.

We thus get a bialgebra $A^\sigma$ with product given by
$$
a \cdot b = \sigma(a_{(1)}, b_{(1)}) a_{(2)} b_{(2)} \sigma^{-1}(a_{(3)}, b_{(3)}).
$$
If $a_g \in A_g$, the right hand side becomes
$$
\phi_g^{-1}(b_{(1)}) a_g b_{(2)} \phi_g(b_{(3)}) = a_g \alpha_g(b).
$$
This shows that $A_g \ni a_g \mapsto a_g \otimes g$ is indeed an algebra homomorphism from $A^\sigma$ to $A \rtimes_\alpha \Gamma$, and induces a bialgebra isomorphism $A^\sigma \simeq A^{t,\alpha}$. Hence $A^\sigma$ is a Hopf algebra as well, and this proves (i) and (ii).
\ep

\begin{remark}
 It follows from Equation \ref{eq:associator2} that $\sigma$ is a $2$-cocycle if and only if $\mu$ is the trivial map, i.e.,~if and only if $\phi : \Gamma \rightarrow H^1(A)$ is a group morphism.
\end{remark}

We end the section by discussing the Grothendieck ring of a twisted graded Hopf algebra.
Denote by $\K(A)$ the Grothendieck ring of the tensor category $\Corep(A)$. By the Jordan--H\"{o}lder theorem, the isomorphism classes of simple objects form a basis of the underlying group, and in the cosemisimple case it can be identified with the subring of $A$ generated by the characters of the simple corepresentations. An invariant cocentral action $(p, \alpha)$ on $A$ defines an action of $\Gamma$ on $\K(A)$, which is trivial when the action is almost adjoint, and in this case we have $\K(A) \simeq \K(A^{t,\alpha})$ as rings, which follows directly either from Proposition \ref{prop:construction} or Theorem \ref{thm:pseudo-2-cocycle}. When the invariant cocentral action is not almost adjoint, the coalgebras $A$ and $A^{t, \alpha}$ are still isomorphic, hence their Grothendieck groups are isomorphic. At the ring level, we have the following result, which follows directly from $\Corep(A)^{t,\alpha} = \Corep(A^{t,\alpha})$ as well.

\begin{proposition}\label{prop:grothendieck}
 Given an invariant cocentral action $(p,\alpha)$ of $\Gamma$ on $A$, we have a ring isomorphism
$$\K(A^{t,\alpha}) \simeq (\K(A) \rtimes \Gamma)^{{\rm co}K \Gamma},$$
where $\Gamma$ acts on $\K(A)$ by the functoriality of the functor $\K$.
\end{proposition}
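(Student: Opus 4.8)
The plan is to deduce the statement entirely from the monoidal equivalence $\Corep(A^{t,\alpha})\simeq\Corep(A)^{t,\alpha}$ established above, so that $\K(A^{t,\alpha})$ is identified with the Grothendieck ring of the twisted category $\Corep(A)^{t,\alpha}$. First I would record that the Grothendieck ring of the crossed product category $\Corep(A\rtimes_\alpha\Gamma)\simeq\Corep(A)\rtimes_\alpha\Gamma$ is the crossed product ring $\K(A)\rtimes\Gamma$: a $\ZZ$-basis of the underlying free abelian group is given by the classes $[U\boxtimes g]=[U]\rtimes g$, where $U$ runs over the simple objects of $\Corep(A)$ and $g$ over $\Gamma$, and the fusion rule $(U\boxtimes g)\otimes(V\boxtimes h)=(U\otimes\alpha^g(V))\boxtimes gh$ is precisely the crossed product multiplication. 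Here the action of $\Gamma$ on $\K(A)$ is the one induced by the autoequivalences $\alpha^g$ of $\Corep(A)$, i.e.\ exactly the action by functoriality of $\K$ appearing in the statement.

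Next I would transport the $K\Gamma$-comodule structure of Proposition~\ref{prop:construction} to this Grothendieck ring. The coaction \eqref{eq:coact-K-Gamma-on-inv-cocentr-act} makes $A\rtimes_\alpha\Gamma$ a $\Gamma$-graded coalgebra: for $a\in A_h$ the element $a\otimes g$ is homogeneous of degree $hg^{-1}$, and $\Delta(A_h)\subset A_h\otimes A_h$ guarantees compatibility with the coproduct. By Lemma~\ref{lem:gradings} this endows $\Corep(A\rtimes_\alpha\Gamma)$ with a full $\Gamma$-grading, hence the free abelian group $\K(A)\rtimes\Gamma$ with a $\Gamma$-grading of abelian groups, under which the basis element $[U]\rtimes g$ with $U\in\CC_h$ lands in degree $hg^{-1}$. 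The induced $K\Gamma$-coaction on $\K(A)\rtimes\Gamma$ is thus $[U]\rtimes g\mapsto ([U]\rtimes g)\otimes hg^{-1}$, and its coinvariants $(\K(A)\rtimes\Gamma)^{\co K\Gamma}$ form the degree-$e$ part, namely the span of those $[U]\rtimes g$ with $U\in\CC_g$. This is a subring: for $U\in\CC_g$ and $V\in\CC_h$ invariance gives $\alpha^g(V)\in\CC_h$, so $U\otimes\alpha^g(V)\in\CC_{gh}$ and every simple constituent $W$ satisfies $[W]\rtimes gh$ of degree $e$. This matches the fact from Proposition~\ref{prop:construction} that $(A\rtimes_\alpha\Gamma)^{\co K\Gamma}=A^{t,\alpha}$ is a subalgebra.

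Finally I would match the two descriptions of this subring. By definition $\Corep(A)^{t,\alpha}$ is the full monoidal subcategory of $\Corep(A)\rtimes_\alpha\Gamma$ generated by the objects $X\boxtimes g$ with $X\in\CC_g$; hence its simple objects are the $U\boxtimes g$ with $U$ simple in $\CC_g$, and its Grothendieck ring is exactly the subring of $\K(A)\rtimes\Gamma$ spanned by the $[U]\rtimes g$ with $U\in\CC_g$. This is precisely the ring of coinvariants computed above. Combining this with the identification $\K(A^{t,\alpha})\simeq\K(\Corep(A)^{t,\alpha})$ from the first step yields $\K(A^{t,\alpha})\simeq(\K(A)\rtimes\Gamma)^{\co K\Gamma}$, as claimed.

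The only genuinely delicate point is the bookkeeping in the second step: the coaction of Proposition~\ref{prop:construction} is a comodule-\emph{coalgebra} (not comodule-algebra) structure, so the induced $\Gamma$-grading on $\K(A)\rtimes\Gamma$ is a grading of abelian groups only, and one must verify separately that its degree-$e$ part is closed under the product. This is exactly the computation above, which relies on the invariance $\alpha_g(A_h)=A_h$ and on $p\alpha_g=p$; once it is in place, everything else is a routine comparison of bases.
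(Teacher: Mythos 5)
Your proof is correct and follows exactly the route the paper indicates: the paper's entire proof is the remark that the isomorphism ``follows directly from $\Corep(A)^{t,\alpha}=\Corep(A^{t,\alpha})$,'' and your argument is a careful expansion of precisely this, identifying $\K(A)\rtimes\Gamma$ as the Grothendieck ring of $\Corep(A)\rtimes_\alpha\Gamma$ and the coinvariants with the span of the classes $[U]\rtimes g$, $U\in\CC_g$. Your added verification that the degree-$e$ part is closed under multiplication (since the coaction of Proposition~\ref{prop:construction} is only a comodule-coalgebra structure) is a legitimate detail the paper leaves implicit, and you handle it correctly via the invariance $\alpha_g(A_h)=A_h$.
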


\subsection{Invariant cocentral actions by abelian groups}\label{sec:abelian-groups}

There is a slight asymmetry in our construction. If we start with an invariant cocentral action of $\Gamma$ on a Hopf algebra, then we get a new twisted Hopf algebra, but an action of $\Gamma$ is no longer defined on it. This is in itself not surprising, as typically the symmetries of deformations should also deform. The situation is better for abelian groups.

Thus, in this section we specialize to the case when $\Gamma$ is abelian to have an extra symmetry in our construction.

\begin{definition}
We denote by $\DD(\Gamma)$ the category whose objects are the triples $(A,p,\alpha)$, where $(p, \alpha)$ is an invariant cocentral action of $\Gamma$ on a Hopf algebra $A$, and whose morphisms $(A, p, \alpha)\rightarrow  (B, q, \beta)$ are the Hopf algebra homomorphisms $f\colon A \rightarrow B$ such that $q f=p$ and $\beta_g f=\alpha_g$ for any $g \in \Gamma$.
\end{definition}

In other words, the morphisms in $\DD(\Gamma)$ are the $\Gamma$-equivariant homomorphisms of $\Gamma$-graded Hopf algebras.

\begin{lemma}
If $\Gamma$ is abelian and $(A, p,\alpha)$ is an object of $\DD(\Gamma)$, then $\tilde{\alpha}_g(j(a)) = j(\alpha_{g}(a))$ for $g \in \Gamma$ defines an object $(A^{t,\alpha}, \tilde{p}, \tilde{\alpha})$ of $\mathcal D(\Gamma)$, where $j$ is the map defined in Proposition~\ref{prop:construction}.
\end{lemma}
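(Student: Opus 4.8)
The plan is to realize $\tilde\alpha_g$ as the conjugate $j\alpha_g j^{-1}$ and then to check the three things needed for $(\tilde p,\tilde\alpha)$ to be an invariant cocentral action on $A^{t,\alpha}$: that each $\tilde\alpha_g$ is a Hopf algebra automorphism, that $g\mapsto\tilde\alpha_g$ is a group action, and that it is invariant in the sense $\tilde p\tilde\alpha_g=\tilde p$. First I would note that by Proposition~\ref{prop:construction} the coalgebra map $j$ is bijective, so the defining relation $\tilde\alpha_g(j(a))=j(\alpha_g(a))$ determines $\tilde\alpha_g=j\alpha_g j^{-1}$ unambiguously; as a composite of coalgebra isomorphisms it is automatically a coalgebra automorphism of $A^{t,\alpha}$, and the group law $\tilde\alpha_g\tilde\alpha_h=j\alpha_g\alpha_h j^{-1}=j\alpha_{gh}j^{-1}=\tilde\alpha_{gh}$, together with $\tilde\alpha_e=\iota$, is immediate from $\alpha$ being an action. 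For the algebra structure it is convenient to compute $\tilde\alpha_g$ on homogeneous elements: for $a\in A_h$ we have $p(a)=\couni(a)h$, hence $j(a)=a\otimes h$, and since the action is invariant ($\alpha_g(A_h)=A_h$) we get $j(\alpha_g(a))=\alpha_g(a)\otimes h$. Thus $\tilde\alpha_g$ acts as $\alpha_g\otimes\iota$ on the spanning elements $a\otimes h$ with $a\in A_h$.

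The heart of the argument, and the place where abelianness of $\Gamma$ enters, is multiplicativity of $\tilde\alpha_g$ for the product inherited from $A\rtimes_\alpha\Gamma$. For $a\in A_{h_1}$ and $b\in A_{h_2}$ one computes in $A^{t,\alpha}$ that $(a\otimes h_1)(b\otimes h_2)=a\,\alpha_{h_1}(b)\otimes h_1h_2$, which is again homogeneous of degree $h_1h_2$ because $\alpha_{h_1}(b)\in A_{h_2}$ and $A_{h_1}A_{h_2}\subset A_{h_1h_2}$. Applying $\tilde\alpha_g$ therefore yields $\alpha_g(a)\,\alpha_g\alpha_{h_1}(b)\otimes h_1h_2$, whereas $\tilde\alpha_g(a\otimes h_1)\,\tilde\alpha_g(b\otimes h_2)=\alpha_g(a)\,\alpha_{h_1}\alpha_g(b)\otimes h_1h_2$. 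These coincide exactly when $\alpha_g\alpha_{h_1}=\alpha_{h_1}\alpha_g$, i.e.~when $gh_1=h_1g$; I expect this commutation of the automorphisms to be the only genuine obstacle, and it is resolved precisely by the hypothesis that $\Gamma$ is abelian.

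With multiplicativity in hand, each $\tilde\alpha_g$ is a bialgebra automorphism (it also fixes the unit $1\otimes e=j(1)$, since $1\in A_e$), hence a Hopf algebra automorphism, compatibility with the antipode being automatic. It then remains to verify that $(\tilde p,\tilde\alpha)$ is an invariant cocentral action: $\tilde p$ is already a cocentral Hopf algebra homomorphism by Proposition~\ref{prop:construction}, so what is left is the invariance $\tilde p\tilde\alpha_g=\tilde p$. Using $\tilde pj=p$ from Proposition~\ref{prop:construction} and the invariance $p\alpha_g=p$ of the original action, I would write $\tilde p\tilde\alpha_g j=\tilde p j\alpha_g=p\alpha_g=p=\tilde p j$ and cancel the surjection $j$ on the right to conclude $\tilde p\tilde\alpha_g=\tilde p$ (equivalently, $\tilde\alpha_g$ preserves the grading of $A^{t,\alpha}$). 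This shows that $(A^{t,\alpha},\tilde p,\tilde\alpha)$ is an object of $\DD(\Gamma)$.
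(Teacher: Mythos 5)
Your proof is correct and follows essentially the same route as the paper, which simply observes that $\alpha_g(a\otimes h)=\alpha_g(a)\otimes h$ defines Hopf algebra automorphisms of $A\rtimes_\alpha\Gamma$ (well-definedness being exactly the multiplicativity check you carry out, where abelianness of $\Gamma$ enters via $\alpha_g\alpha_{h}=\alpha_{h}\alpha_g$) and then restricts to $A^{t,\alpha}$ using invariance of the action. Your version merely spells out the details the paper leaves implicit, including the identification $\tilde\alpha_g=j\alpha_g j^{-1}$ and the verification $\tilde p\tilde\alpha_g=\tilde p$.
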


\bp
We have a well-defined action of $\Gamma$ on $A\rtimes_\alpha\Gamma$ by the Hopf algebra automorphisms
$$
\alpha_g(a\otimes h)=\alpha_g(a)\otimes h.
$$
By restriction we get the required action of $\Gamma$ on $A^{t,\alpha}$.
\ep

Given objects $(A, p, \alpha)$ and $(B,q,\beta)$ of $\mathcal D(\Gamma)$ and a morphism $f \colon (A, p, \alpha)\rightarrow(B,q,\beta)$, the natural extension $f \otimes \iota$ is a Hopf algebra homomorphism between the crossed products by $\Gamma$, and by restriction we obtain a Hopf algebra homomorphism $A^{t,\alpha} \rightarrow B^{t,\beta}$. Moreover if $\Gamma$ is abelian, it is a morphism from $(A^{t,\alpha}, \tilde{p}, \tilde{\alpha})$ to $(B^{t,\beta}, \tilde{q}, \tilde{\beta})$ in the category $\mathcal D(\Gamma)$. Therefore our graded twisting procedure defines an endofunctor of the category $\mathcal D(\Gamma)$, which is, as we shall see soon, an autoequivalence.

Since $\Gamma$ is abelian, the set of its endomorphisms $\End(\Gamma)$ has a ring structure, with the product given by composition $(\phi \cdot \psi)(g) = \phi(\psi(g))$ and the sum given by pointwise product $(\phi + \psi)(g) = \phi(g) \psi(g)$. Using this structure, one obtains the `$a x + b$' semigroup structure on $\End(\Gamma)^2$, whose product is given by
$$
(\phi_1, \psi_1) \cdot (\phi_2, \psi_2) = (\phi_1 + \psi_1 \cdot \phi_2, \psi_1 \cdot \psi_2).
$$
Let us show next that there is a right action of this semigroup on the category $\DD(\Gamma)$ such that the twisting functor $(A, p,\alpha)\mapsto(A^{t,\alpha}, \tilde{p}, \tilde{\alpha})$ corresponds to the action by the element $(\iota,\iota)$ in this semigroup.

If $\phi \in \End(\Gamma)$ and $(A, p, \alpha) \in \DD(\Gamma)$, the composition $\alpha \phi$ defines again an invariant cocentral action of $\Gamma$. Moreover, for each $g\in \Gamma$, the maps $\tilde{\alpha}_g = \alpha^g \otimes \iota$ on $A \rtimes_{\alpha \phi} \Gamma$ define an action on~$A^{t,\alpha\phi}$. Thus we can define an endofunctor $F^a_\phi$ on $\DD(\Gamma)$ by setting $F^a_\phi(A, p, \alpha) = (A^{t,\alpha \phi}, \tilde{p}, \tilde{\alpha})$. We need to verify the additivity in $\phi$.

\begin{lemma}
Let $\phi, \psi$ be endomorphisms of $\Gamma$, and $(A, p, \alpha)$ be an object of $\mathcal D(\Gamma)$. Then $A^{t,\alpha(\phi + \psi)}$ is naturally isomorphic to $(A^{t,\alpha\phi})^{t,\tilde{\alpha}\psi}$, given by the map $a_g \otimes g \mapsto a_g \otimes g \otimes g$ for $a_g \in A_g$.
\end{lemma}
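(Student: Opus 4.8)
The plan is to exhibit the stated assignment as an isomorphism $\theta\colon A^{t,\alpha(\phi+\psi)}\to(A^{t,\alpha\phi})^{t,\tilde\alpha\psi}$ in $\DD(\Gamma)$, natural in $(A,p,\alpha)$, where $\theta(a_g\otimes g)=(a_g\otimes g)\otimes g$ for $a_g\in A_g$. First I would record the descriptions of both sides as spans of homogeneous elements: $A^{t,\alpha(\phi+\psi)}$ is spanned by the $a\otimes g$ with $a\in A_g$, and, using that the residual action on $A^{t,\alpha\phi}$ is $\tilde\alpha_s=\alpha_s\otimes\iota$ and that its residual grading $\tilde p$ assigns degree $g$ to $a\otimes g$ with $a\in A_g$, the iterated twist $(A^{t,\alpha\phi})^{t,\tilde\alpha\psi}$ is spanned inside $(A\rtimes_{\alpha\phi}\Gamma)\rtimes_{\tilde\alpha\psi}\Gamma$ by the $(a\otimes g)\otimes g$ with $a\in A_g$. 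This already identifies $\theta$ as a linear bijection onto the target; and since $\Delta(A_g)\subset A_g\otimes A_g$ and all coproducts in sight are tensor products of $\Delta$ on $A$ with the diagonal coproducts on the group-algebra factors, $\theta$ is at once a coalgebra isomorphism preserving the grading homomorphisms.

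The hard part, really the only substantive step, is multiplicativity, and it is exactly here that the additivity $(\phi+\psi)(g)=\phi(g)\psi(g)$ must be used. I would take $a\in A_g$, $b\in A_h$ and compute both products. In the source the product is $(a\otimes g)(b\otimes h)=a\,\alpha_{(\phi+\psi)(g)}(b)\otimes gh$, whose image under $\theta$ is $(a\,\alpha_{(\phi+\psi)(g)}(b)\otimes gh)\otimes gh$. In the target, $\theta(a\otimes g)\,\theta(b\otimes h)$ is an iterated crossed-product multiplication: the outer factor applies $\tilde\alpha_{\psi(g)}$ to $b\otimes h$, yielding $\alpha_{\psi(g)}(b)\otimes h$, and the inner multiplication in $A\rtimes_{\alpha\phi}\Gamma$ then gives $a\,\alpha_{\phi(g)}(\alpha_{\psi(g)}(b))\otimes gh$. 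Since $\alpha$ is a homomorphism into $\Aut(A)$, we have $\alpha_{\phi(g)}\alpha_{\psi(g)}=\alpha_{\phi(g)\psi(g)}=\alpha_{(\phi+\psi)(g)}$, so after tensoring with $gh$ the two agree. This shows $\theta$ is an algebra, hence Hopf algebra, isomorphism.

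Finally I would verify that $\theta$ intertwines the residual $\Gamma$-actions and is natural; both are routine bookkeeping. On the source the action sends $a_g\otimes g$ to $\alpha_s(a_g)\otimes g$, and on the target the iterated residual action sends $(a_g\otimes g)\otimes g$ to $(\alpha_s(a_g)\otimes g)\otimes g$, which match under $\theta$ precisely because each twisting step reads its residual action off the same $\alpha$. Naturality in $(A,p,\alpha)$ is immediate since every structural map is given by the universal formulas and each $\DD(\Gamma)$-morphism is implemented by $f\otimes\iota$ and its iterate on the crossed products. The one conceptual point to keep in focus throughout, and the source of the main potential for error, is that twisting by $\alpha\phi$ leaves the residual action equal to $\alpha$ rather than $\alpha\phi$, so that a second twist by $\psi$ composes automorphisms at parameter $g$ into $\alpha_{\phi(g)}\alpha_{\psi(g)}$ and thereby realizes the sum $\phi+\psi$; the rest is homogeneous-component bookkeeping.
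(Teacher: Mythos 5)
Your proof is correct and follows essentially the same route as the paper: both identify the map as the $\Gamma$-graded coalgebra isomorphism coming from the homogeneous-component description of the two sides (the paper phrases this via the maps $j$ and $j^2$), and then reduce everything to the single multiplicativity check in the iterated crossed product, where the outer $\tilde\alpha_{\psi(g)}$ composed with the inner $\alpha_{\phi(g)}$ gives $\alpha_{\phi(g)\psi(g)}=\alpha_{(\phi+\psi)(g)}$. Your extra verifications of equivariance for the residual actions and of naturality merely make explicit what the paper leaves as routine.
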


\begin{proof}
The map in the assertion is induced by the $\Gamma$-graded coalgebra isomorphisms $j\colon A \to A^{t,\alpha(\phi + \psi)}$ and $j^2 \colon A \to (A^{t,\alpha\phi})^{t,\tilde{\alpha}\psi}$. Hence we only need to verify that it is compatible with the algebra structures.

If $a_g \in A_g$ and $b_h \in A_h$, one has
$$
((a_g \otimes g) \otimes g) ((b_h \otimes h) \otimes h) = ((a_g \otimes g) \tilde{\alpha}_{\psi(g)} (b_h \otimes h)) \otimes g h = ((a_g \otimes \alpha_{\phi(g)} \alpha_{\psi(g)}(b_h)) \otimes h)\otimes h.
$$
Since $\alpha_{\phi(g)} \alpha_{\psi(g)} = \alpha_{\phi(g) \psi(g)}$, one sees the compatibility with the algebra structures.
\end{proof}

It follows from the lemma that the graded twisting functor is indeed an autoequivalence of~$\mathcal D(\Gamma)$, with inverse given by the functor $(A,p,\alpha) \mapsto (A^{t,\alpha (-\iota)}, \tilde{p}, \tilde{\alpha})$, where $-\iota$ is the automorphism of $\Gamma$ given by $-\iota(g) = g^{-1}$.

As an immediate and useful consequence, we have the following result.

\begin{lemma}\label{iso-graded-twisted}
 Let $(A,p,\alpha)$ and $(B,q,\beta)$ be objects in $\mathcal D(\Gamma)$. Assume that we have a Hopf algebra map $$f : B \rightarrow A^{t,\alpha}$$ inducing a morphism $(B,q,\beta) \rightarrow (A^{t,\alpha},\tilde{p},\tilde{\alpha})$ in $\mathcal D(\Gamma)$. Then $f$ is an isomorphism if and only if there exists a Hopf algebra map $$f' : A \rightarrow B^{t,\beta(-\iota)}$$ inducing a morphism $(A,p,\alpha) \rightarrow (B^{t,\beta(-\iota)},\tilde{q},\tilde{\beta})$ such that $(f'\otimes \iota)f$ and $(f\otimes \iota)f'$ are the canonical isomorphisms $B\simeq B^{t,0}$ and $A\simeq A^{t,0}$, respectively.
\end{lemma}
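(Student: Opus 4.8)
The plan is to recognize the statement as the elementary fact that, for an autoequivalence of a category, a morphism into the image of an object is invertible precisely when it admits a two-sided inverse expressed through the quasi-inverse functor. Concretely, write $T$ for the graded twisting functor $(A,p,\alpha)\mapsto(A^{t,\alpha},\tilde p,\tilde\alpha)$, which by the preceding discussion is an autoequivalence of $\DD(\Gamma)$ with quasi-inverse $S:=T^{-1}$ given by $(A,p,\alpha)\mapsto(A^{t,\alpha(-\iota)},\tilde p,\tilde\alpha)$. For a morphism $u$ of $\DD(\Gamma)$ the extension $u\otimes\iota$ restricted to the twists is exactly $T(u)$, resp.\ $S(u)$; moreover the composition lemma identifies $TS(B)=(B^{t,\beta(-\iota)})^{t,\tilde\beta}$ with $B^{t,0}\cong B$ and $ST(A)=(A^{t,\alpha})^{t,\tilde\alpha(-\iota)}$ with $A^{t,0}\cong A$ via the canonical isomorphisms $\eta_B\colon B\to TS(B)$ and $\eta'_A\colon A\to ST(A)$, schematically $a_g\mapsto (a_g\otimes g)\otimes g$ for $a_g\in A_g$. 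Under these identifications the two hypotheses on $f'$ read $T(f')\circ f=\eta_B$ and $S(f)\circ f'=\eta'_A$. Finally I would record that a morphism of $\DD(\Gamma)$ is an isomorphism iff its underlying Hopf algebra map is bijective (the inverse of a bijective Hopf map automatically intertwines the gradings and actions), and that $T$ and $S$ preserve and reflect injectivity and surjectivity of underlying maps, since they act as $\,\cdot\otimes\id_{K\Gamma}$ followed by restriction to homogeneous components and a graded map is injective, resp.\ surjective, iff it is so componentwise.

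For the ``if'' direction I would argue purely by bijectivity. From $T(f')\circ f=\eta_B$ being an isomorphism, the inner map $f$ is injective and the outer map $T(f')$ is surjective, whence $f'$ is surjective; from $S(f)\circ f'=\eta'_A$ being an isomorphism, $f'$ is injective and $S(f)$ is surjective, whence $f$ is surjective. Thus $f$ is bijective and therefore an isomorphism in $\DD(\Gamma)$. This half needs no coherence and is the short one.

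For the ``only if'' direction, assuming $f$ is an isomorphism I would set $f':=S(f^{-1})\circ\eta'_A\colon A\to S(B)=B^{t,\beta(-\iota)}$, which is a morphism in $\DD(\Gamma)$ as a composite of such. Condition (ii) is then immediate, $S(f)\circ f'=S(f\circ f^{-1})\circ\eta'_A=\eta'_A$. For condition (i) I would compute $T(f')\circ f=TS(f^{-1})\circ T(\eta'_A)\circ f$ and rewrite this using naturality of $\eta$ for the morphism $f^{-1}\colon T(A)\to B$, namely $\eta_B\circ f^{-1}=TS(f^{-1})\circ\eta_{T(A)}$, reducing the claim to the triangle identity $T(\eta'_A)=\eta_{T(A)}$; granting it, $T(f')\circ f=TS(f^{-1})\circ\eta_{T(A)}\circ f=\eta_B\circ f^{-1}\circ f=\eta_B$.

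The one genuinely computational point, and the place I expect to spend effort, is exactly that triangle identity, i.e.\ the coherence of the two canonical identifications $X^{t,0}\cong X$ coming from the composition lemma. I would verify it directly from the explicit formula $a_g\otimes g\mapsto a_g\otimes g\otimes g$ of that lemma: for $a_g\in A_g$ both $T(\eta'_A)$ and $\eta_{T(A)}$ send the homogeneous element $a_g\otimes g\in T(A)$ to $((a_g\otimes g)\otimes g)\otimes g\in TST(A)$, so they agree. With this coherence in hand the construction of $f'$ is complete, and the two directions together give the stated equivalence.
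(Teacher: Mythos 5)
Your proposal is correct and follows exactly the route the paper intends: the lemma is stated there as an immediate consequence of the preceding discussion showing that graded twisting is an autoequivalence of $\DD(\Gamma)$ with quasi-inverse the $(-\iota)$-twist, via the additivity lemma's canonical isomorphisms $a_g\otimes g\mapsto a_g\otimes g\otimes g$, and your argument is a careful fleshing-out of precisely that. Your verifications of the delicate points the paper leaves implicit --- the triangle identity $T(\eta'_A)=\eta_{T(A)}$ checked on homogeneous elements, the naturality of the canonical isomorphisms, and the fact that $T$ and $S$ act componentwise on homogeneous parts (so they preserve and reflect bijectivity) --- are all accurate.
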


Next, define $F^m_\psi(A, p, \alpha) = (A, p, \alpha \psi)$.

\begin{lemma}
 The endofunctors $F^a_\phi$ and $F^m_\psi$ satisfy $F^a_\phi F^m_\psi = F^m_\psi F^a_{\psi \cdot \phi}$.
\end{lemma}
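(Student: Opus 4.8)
The plan is to prove the equality of the two endofunctors by evaluating both sides on an arbitrary object $(A,p,\alpha)$ of $\mathcal D(\Gamma)$ and then on an arbitrary morphism, so that the whole statement reduces to elementary bookkeeping of how the endomorphisms $\phi,\psi$ compose. The one point that requires care, and the only genuine obstacle, is that $F^a$ and $F^m$ treat their endomorphism argument in different ways: the functor $F^m_\psi$ \emph{precomposes} the action, producing the new action $\alpha\psi\colon g\mapsto\alpha_{\psi(g)}$ while leaving the Hopf algebra and the grading untouched, whereas the twisting functor $F^a_\phi$ twists by $\alpha\phi$ but defines the induced action using the \emph{original} automorphisms, namely $\tilde\alpha_g=\alpha_g\otimes\iota$ on $A\rtimes_{\alpha\phi}\Gamma$ (and \emph{not} $\alpha_{\phi(g)}\otimes\iota$). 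Once these two roles are kept separate, every step is forced.

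First I would compute the left-hand side. Applying $F^m_\psi$ to $(A,p,\alpha)$ gives $(A,p,\beta)$ with $\beta=\alpha\psi$, i.e.\ $\beta_g=\alpha_{\psi(g)}$. Applying $F^a_\phi$ then produces $(A^{t,\beta\phi},\tilde p,\tilde\beta)$, and here the key algebraic identity is $\beta\phi=(\alpha\psi)\phi=\alpha(\psi\cdot\phi)$, which is immediate from $(\beta\phi)_g=\beta_{\phi(g)}=\alpha_{\psi(\phi(g))}=\alpha_{(\psi\cdot\phi)(g)}$ using the product convention $(\psi\cdot\phi)(g)=\psi(\phi(g))$ in the ring $\End(\Gamma)$. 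Thus the underlying Hopf algebra is $A^{t,\alpha(\psi\cdot\phi)}$, and, by the definition of $F^a_\phi$ applied to the action $\beta$, the induced action is $\tilde\beta_g(a\otimes h)=\beta_g(a)\otimes h=\alpha_{\psi(g)}(a)\otimes h$. Next I would compute the right-hand side: applying $F^a_{\psi\cdot\phi}$ to $(A,p,\alpha)$ gives $(A^{t,\alpha(\psi\cdot\phi)},\tilde p,\tilde\alpha)$ with $\tilde\alpha_g(a\otimes h)=\alpha_g(a)\otimes h$, matching the Hopf algebra just found, and then $F^m_\psi$ fixes the Hopf algebra and grading while precomposing the action by $\psi$, yielding $(\tilde\alpha\psi)_g(a\otimes h)=\tilde\alpha_{\psi(g)}(a\otimes h)=\alpha_{\psi(g)}(a)\otimes h$. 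This coincides on the nose with $\tilde\beta$, and the grading homomorphisms $\tilde p$ agree on both sides, since $\tilde p=\varepsilon\otimes\iota$ is unchanged by $F^m_\psi$; hence the two triples are equal.

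Finally I would check agreement on morphisms, which is where the two a priori different crossed products must be seen to coincide. For a morphism $f$ in $\mathcal D(\Gamma)$, the functor $F^m_\psi$ acts as the identity on the underlying Hopf algebra map, while $F^a_\chi$ sends $f$ to the restriction of $f\otimes\iota$ between the appropriate crossed products. Using $(\alpha\psi)\phi=\alpha(\psi\cdot\phi)$ for the source and the same identity $(\gamma\psi)\phi=\gamma(\psi\cdot\phi)$ for the target of $f$, both composites send $f$ to the restriction of the \emph{identical} map $f\otimes\iota$ on $A\rtimes_{\alpha(\psi\cdot\phi)}\Gamma$, so they agree as morphisms as well. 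This completes the verification; as noted, the only substantive point is disentangling the precomposition role of $\psi$ in $F^m_\psi$ from the fixed-action role of the original $\alpha_g$ in the induced action of $F^a_\phi$.
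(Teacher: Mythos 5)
Your proof is correct and takes essentially the same route as the paper, whose proof is the one-line ``expanding the definitions'' computation: both composites send $(A,p,\alpha)$ to the same triple and act identically on morphisms, with the only real content being exactly the distinction you isolate between precomposition in $F^m_\psi$ and the untwisted induced action $\tilde\alpha_g=\alpha_g\otimes\iota$ in $F^a_\phi$. In fact your identification of the common value as $(A^{t,\alpha(\psi\cdot\phi)},\tilde p,\tilde\alpha\psi)$ is the accurate one: the paper's proof writes $(A^{t,\alpha\phi},\tilde p,\tilde\alpha\psi)$, which is the formula for $F^m_\psi F^a_\phi$ rather than for the two composites in the lemma, an apparent typo that your computation silently corrects.
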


\bp
Expanding the definitions, we see that both $F^a_\phi F^m_\psi$ and $F^m_\psi F^a_{\psi \cdot \phi}$ send the object $(A, p, \alpha)$ to $(A^{t,\alpha \phi}, \tilde{p}, \tilde{\alpha} \psi)$.
\ep

By this lemma, we obtain a right action of the $(a x + b)$-semigroup of $\End(\Gamma)$ on $\DD(\Gamma)$, defined~by
$$
(A, p, \alpha) \cdot (\phi, \psi) = F^m_\psi F^a_\phi (A, p, \alpha)=(A^{t,\alpha \phi}, \tilde{p}, \tilde{\alpha}\psi).
$$
As an example, with the automorphism $-\iota(g) = g^{-1}$, the action of $(-\iota,-\iota)$ defines an involution on $\DD(\Gamma)$.

\subsection{Examples} To illustrate the graded twisting procedure, we end the section with examples.

\begin{example}\label{ex-sl2}
First let us explain the relation of quantum $\SL(2)$ for the parameters  $q$ and $-q$ (see~\cite{MR3340190} for the compact case). At the categorical level this can be explained also from the universality of the Temperley--Lieb category, see~\cite{MR2046203}*{Remark~2.2}.

Denote by $a,b,c,d$ the standard generators of $\mathcal O(\SL_q(2))$. There is a cocentral Hopf algebra map
\begin{equation*}
   p: \mathcal O(\SL_q(2))  \rightarrow K\mathbb Z_2,\quad
\begin{pmatrix} a & b \\ c & d  \end{pmatrix} \mapsto \begin{pmatrix} g & 0 \\ 0 & g\end{pmatrix}
  \end{equation*}
where $g$ denotes the generator of $\mathbb Z_2$.
There is also an involutive Hopf algebra automorphism $\alpha=\alpha_g$ of $\mathcal O(\SL_q(2))$ defined by
$$
\begin{pmatrix} a & b \\ c & d  \end{pmatrix} \mapsto \begin{pmatrix} a & -b \\ -c & d  \end{pmatrix} = \begin{pmatrix} i & 0 \\0 & -i\end{pmatrix} \begin{pmatrix} a & b \\ c & d  \end{pmatrix} \begin{pmatrix} -i & 0 \\0 & i\end{pmatrix}.
$$
It is straightforward to check that there exits a Hopf algebra map
\begin{equation*}
 \mathcal O(\SL_{-q}(2)) \rightarrow \mathcal O(\SL_q(2))^{t,\alpha},\quad
\begin{pmatrix} a & b \\ c & d  \end{pmatrix} \mapsto \begin{pmatrix} a\otimes g & b\otimes g \\ c\otimes g & d \otimes g \end{pmatrix}
\end{equation*}
which is easily seen to be an isomorphism. Note that the above cocentral action also induces a cocentral action of $\mathbb Z_2$ on $\mathcal O(B_q)$, the quotient of $\mathcal O(\SL_q(2))$ by the ideal generated by $c$, and that similarly
$\mathcal O(B_{-q})$ is a graded twist of $\mathcal O(B_q)$. In addition, suppose that $K$ contains the imaginary unit $i$. Since
$$
\begin{pmatrix} a & b \\ c & d  \end{pmatrix} \mapsto \begin{pmatrix} i & 0 \\0 & -i\end{pmatrix}
$$
extends to an algebra homomorphism $\Pol(\SL_q(2)) \to K$, the pair $(p,\alpha)$ becomes an almost adjoint invariant cocentral action of $\mathbb Z_2$ on $\mathcal O(\SL_q(2))$.
\end{example}

\begin{example}\label{ex-be}
The example of  $\mathcal O(\SL_q(2))$ has the following natural generalization. Let $E\in \GL(n,K)$ and consider the Hopf algebra $\mathcal B(E)$ defined by Dubois-Violette and Launer \cite{MR1068703}: $\mathcal B(E)$ is the algebra generated by $a_{ij}$, $1 \leq i,j \leq n$, subject to the relations
$E^{-1}a^tEa=I_n=aE^{-1}a^tE$, where $a$ is the matrix $(a_{ij})$ (for an appropriate matrix $E_q$, one gets  $\mathcal O(\SL_q(2))$, see \cite{MR1068703}). Just as before, there is a cocentral Hopf algebra map
\begin{equation*}
 p\colon \mathcal B(E) \rightarrow K \mathbb Z_2,\quad
 a_{ij} \mapsto \delta_{ij}g.
\end{equation*}
Consider now a matrix $Q \in \GL(n,K)$ such that $Q^2=\pm I_n$  and $Q^tEQ=E$. Such a matrix defines an involutive Hopf algebra automorphism $\alpha_Q$ of $\mathcal B(E)$ with $\alpha_Q(a)=QaQ^{-1}$, and we get an almost adjoint invariant cocentral action $(p,\alpha_Q)$ of $\mathbb Z_2$ on $\mathcal B(E)$. Again a direct verification provides a Hopf algebra morphism
\begin{equation*}
 \mathcal B(EQ^{-1})=\mathcal B(EQ) \rightarrow \mathcal B(E)^{t, \alpha_Q},
 \quad a_{ij} \mapsto a_{ij} \otimes g
\end{equation*}
which is shown to be an isomorphism using Lemma \ref{iso-graded-twisted}.
\end{example}

\begin{example}\label{ex-freeproduct}
To finish the section with a cocentral action that is not almost adjoint, we examine an example related to free products. Let $A$ be a Hopf algebra endowed with a cocentral surjective Hopf algebra map $p \colon A \rightarrow K \mathbb Z_2$. Consider the free product Hopf algebra $A*A$ (as an algebra, it is the free product of $A$ with itself in the category of unital $K$-algebras). It admits the cocentral Hopf algebra map $A*A \rightarrow K\mathbb Z_2$ whose restriction to each copy is $p$, that we still denote $p$. Let $\alpha$ be the Hopf algebra automorphism of $A*A$ that exchanges the two copies of~$A$. We get a cocentral action $(p, \alpha)$ of $\mathbb Z_2$ on $A*A$, and hence a graded twisting $(A*A)^{t,\alpha}$. The simple corepresentations of $(A*A)^{t,\alpha}$ can be labeled by the reduced words on those of $A$ (since the same is true for the free product, see \cite{MR3413872}), and the fusion rules can be computed from those of $A$, see Proposition \ref{prop:grothendieck}.

Now let $A= \mathcal B(E)$ as in the previous example. Put $F=E^{t}E^{-1}$, and consider the universal cosovereign Hopf algebra $\Hpf(F)$ (see \cite{MR2302731}, these are the Hopf algebraic generalizations of the universal compact quantum groups from \cite{MR1382726}), which is the algebra generated by $u_{ij}$, $v_{ij}$, $1\leq i,j\leq n$, subject to the relations
$$uv^t=I_n=v^tu, \ Fu^tF^{-1}v=I_n= vFu^tF^{-1}$$
where $u$, $v$ are the matrices $(u_{ij})$, $(v_{ij})$ respectively. We can then directly verify that the correspondence
\begin{equation*}
u \mapsto a^{(1)} \otimes g, \quad v  \mapsto E^t a^{(2)} (E^{-1})^t \otimes g
\end{equation*}
defines a Hopf algebra homomorphism $\Hpf(F) \rightarrow (\mathcal B(E)*\mathcal B(E))^{t, \alpha}$, where the superscript refers to the numbering of copies inside the free product. To prove that this is an isomorphism, one constructs a cocentral action $(q,\beta)$ of $\mathbb Z_2$ on $\Hpf(F)$:
the cocentral Hopf algebra map is $q \colon \Hpf(F) \rightarrow K$, $u_{ij}, v_{ij} \mapsto \delta_{ij} g$, and the $\mathbb Z_2$-action on $\Hpf(F)$ is given by $\beta(u)= (E^{-1})^tvE^t$ and $\beta(v)=E^tu(E^{-1})^t$.
One then checks the existence of a Hopf algebra map
\begin{equation*}
 \mathcal B(E)*\mathcal B(E) \rightarrow \Hpf(F)^{t,\beta}, \quad
a^{(1)} \mapsto u \otimes g, \quad  a^{(2)} \mapsto (E^{-1})^tvE^t\otimes g
\end{equation*}
and using Lemma \ref{iso-graded-twisted} we conclude that $\Hpf(F)$ is a graded twist of $\mathcal B(E) * \mathcal B(E)$. The details are left to the reader.

It seems that the graded twisting picture is the simplest way to describe the corepresentations of $\Hpf(F)$ \citelist{\cite{MR1484551}\cite{MR2302731}\cite{MR2853627}\cite{MR3413872}} when $F$ is of type $E^{t}E^{-1}$.
\end{example}

Several other examples will be examined, at the compact quantum group level, in the next sections.

\section{Compact quantum groups}
\label{sec:cpt-q-grps}

In this section we specialize to the case of compact quantum groups. In particular, we assume that $K=\C$. We refer the reader to~\cite{MR3204665} for basic terminology.

\subsection{Twisting regular functions of compact quantum groups}

We work with Hopf algebras $A=\Pol(G)$ of regular functions of compact quantum groups. This simply means that $A$ is assumed to be a Hopf $*$-algebra such that every finite dimensional corepresentation is unitarizable, or equivalently, $A$ is generated as an algebra by matrix coefficients of finite dimensional unitary corepresentations, see~\citelist{\cite{MR1492989}*{Section~11.3.1}\cite{MR3204665}*{Section~1.6}}.

We denote the dual algebra of $\Pol(G)$ by $\U(G)$. It is a $*$-algebra, with the $*$-structure defined by $\omega^*(a)=\overline{\omega(S(a)^*)}$ for $\omega\in \U(G)$ and $a\in\Pol(G)$. More generally, we write $\U(G^n)$ for the dual of $\Pol(G)^{\otimes n}$. We denote by $\hat\Delta\colon \U(G)\to\U(G\times G)$ the `coproduct' map which is the dual of the product $\Pol(G)\otimes\Pol(G)\to\Pol(G)$. We also write $\Rep G$ instead of $\Corep(\Pol(G))$, and $\Ch(G)$ instead of $\Ch(\Corep(\Pol(G)))$.

When dealing with compact quantum groups, we require invariant cocentral actions $(p,\alpha)$ to be $*$-preserving. In other words, we assume that the action satisfies $\alpha_g(a)^*=\alpha_g(a^*)$. Note that we automatically have $a^*\in \Pol(G)_{g^{-1}}$ for $a\in \Pol(G)_g$. Indeed, such $a$ is of the form $(\omega_{\xi,\eta}\otimes\iota)(U)$ for a unitary representation $U \in (\Rep G)_g$ and $\xi, \eta \in H_U$, where we put $\omega_{\xi,\eta}(T) = (T \xi, \eta)$ for $T \in B(H_U)$. The involution is characterized by $(\omega_{\xi,\eta}\otimes\iota)(U)^* = (\omega_{\eta,\xi}\otimes S)(U)$, and $S(\Pol(G)_g) = \Pol(G)_{g^{-1}}$ (see remark after Lemma~\ref{lem:gradings}) implies $a^* \in \Pol(G)_{g^{-1}}$. Moreover, the corresponding cocentral homomorphism $\pi \colon \Pol(G) \to \C\Gamma$ becomes a $*$-homomorphism with respect to the standard $*$-structure $g^* = g^{-1}$ on $\C\Gamma$. Indeed, if $a\in\Pol(G)_g$, then $\pi(a)=\eps(a)g$, and since $\eps$ is a $*$-homomorphism, we get $\pi(a)^*=\eps(a^*)g^{-1}=\pi(a^*)$.

In this case the twisted Hopf algebra $\Pol(G)^{t,\alpha}$ is again a Hopf $*$-algebra, with the $*$-structure inherited from $A\rtimes_\alpha\Gamma$. Hence it is the algebra of regular functions on a compact quantum group, which we denote by $G^{t,\alpha}$.

While in the noncoamenable case it is hard to relate the C$^*$-norms on $\Pol(G^{t,\alpha})$ to those on~$\Pol(G)$, the reduced norms can be easily related as follows.

\begin{proposition}\label{prop:reduced-algebra}
The C$^*$-algebra $C_r(G^{t,\alpha})$ coincides with the norm closure of $\Pol(G)^{t,\alpha}$ in the reduced crossed product $C_r(G)\rtimes_{\alpha,r}\Gamma$.
\end{proposition}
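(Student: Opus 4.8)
The plan is to realize $C_r(G^{t,\alpha})$ as the GNS completion of $\Pol(G)^{t,\alpha}$ for the restriction of a canonical faithful state on the reduced crossed product, and then invoke faithfulness to match the two norms. Write $A=\Pol(G)$, let $h$ be the Haar state of $G$, and let $h^{t,\alpha}$ be the Haar state of $G^{t,\alpha}$, so that $C_r(G^{t,\alpha})$ is by definition the norm closure of $A^{t,\alpha}=\Pol(G)^{t,\alpha}$ in the GNS representation of $h^{t,\alpha}$. Set $M:=C_r(G)\rtimes_{\alpha,r}\Gamma$. It carries the canonical faithful conditional expectation $E\colon M\to C_r(G)$, $\sum_g a_g\lambda_g\mapsto a_e$, hence the state $\tilde h:=h\circ E$, whose GNS space is $L^2(G)\otimes\ell^2(\Gamma)$ with cyclic vector $\hat 1:=\hat 1_{L^2(G)}\otimes\delta_e$ (this is just the defining regular representation of the reduced crossed product). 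Under the canonical inclusion $A\rtimes_\alpha\Gamma\hookrightarrow M$ the Hopf $*$-subalgebra $A^{t,\alpha}$ sits inside $M$, and its norm closure $B_0$ is what we must identify with $C_r(G^{t,\alpha})$.

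The first and crucial step is to check that $\tilde h$ restricts to the Haar state on $A^{t,\alpha}$, i.e.~that $\tilde h|_{A^{t,\alpha}}=h^{t,\alpha}$ under the coalgebra isomorphism $j\colon A\to A^{t,\alpha}$ of Proposition~\ref{prop:construction}. I would first record that $h^{t,\alpha}\circ j=h$: the Haar state of a compact quantum group is nothing but the counit on the trivial subcoalgebra and zero on every other simple subcoalgebra, and $j$ is a coalgebra isomorphism sending the trivial subcoalgebra $K1_A$ to the trivial subcoalgebra $K(1\otimes e)$ of $A^{t,\alpha}$ (with $\couni^{t,\alpha}j=\couni$), so it intertwines these coalgebraic projections. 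On the other hand, for $a\in A_g$ one has $j(a)=a\otimes g$ and $E(a\otimes g)=\delta_{g,e}\,a$, whence $\tilde h(j(a))=\delta_{g,e}h(a)$; since $h$ already vanishes on $A_g$ for $g\ne e$ (these spaces are spanned by matrix coefficients of nontrivial corepresentations), this equals $h(a)=h^{t,\alpha}(j(a))$. Thus $\tilde h$ and $h^{t,\alpha}$ agree on $A^{t,\alpha}$.

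Consequently the assignment $x\mapsto x\hat 1$ identifies $L^2(G^{t,\alpha})$ isometrically with the closed subspace $\mathcal{H}_0:=\overline{A^{t,\alpha}\hat 1}\subset L^2(G)\otimes\ell^2(\Gamma)$, since $\langle x\hat 1,y\hat 1\rangle=\tilde h(y^*x)=h^{t,\alpha}(y^*x)$ for $x,y\in A^{t,\alpha}$. As $\hat 1\in\mathcal{H}_0$ and $B_0$ is an algebra, $\mathcal{H}_0$ is $B_0$-invariant, and the resulting representation of $A^{t,\alpha}$ on $\mathcal{H}_0$ is precisely its GNS representation associated with $h^{t,\alpha}$, whose closure is $C_r(G^{t,\alpha})$. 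Finally I would pass to norms via faithfulness. Since $h$ is faithful on $C_r(G)$ and $E$ is a faithful conditional expectation, $\tilde h$ is faithful on $M$; faithfulness then descends to the C$^*$-subalgebra $B_0$, because $x\in B_0$ with $\tilde h(x^*x)=0$ forces $x^*x=0$ in $M$, hence $x=0$. Therefore the GNS representation of $(B_0,\tilde h|_{B_0})$ is faithful, so isometric, and it is realized on $\mathcal{H}_0$ with cyclic vector $\hat 1$. Restricting to the dense subalgebra $A^{t,\alpha}$ gives $\|x\|_M=\|x\|_{B_0}=\|\pi_{h^{t,\alpha}}(x)\|_{B(\mathcal{H}_0)}=\|x\|_{C_r(G^{t,\alpha})}$, so the two completions of $A^{t,\alpha}$ coincide, proving $B_0=C_r(G^{t,\alpha})$.

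The main obstacle is the second step, namely verifying $\tilde h|_{A^{t,\alpha}}=h^{t,\alpha}$; once this state identification is in place, the rest is formal operator-algebra bookkeeping. The essential structural input is the faithfulness of the canonical conditional expectation on the \emph{reduced} crossed product (it fails for the universal one), which is exactly the reason why the reduced norms can be compared whereas, as noted before the proposition, the full C$^*$-norms cannot be related so directly.
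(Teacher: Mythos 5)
Your proof is correct and follows essentially the same route as the paper: both identify the state $h\circ E$ on the reduced crossed product as the one whose restriction to $\Pol(G)^{t,\alpha}$ is the Haar state $h^{t,\alpha}$, and then match the GNS completions. You merely spell out what the paper leaves implicit (that $j$ intertwines the Haar states, and the faithfulness argument showing the GNS norm of $h\circ E$ on the closure of $\Pol(G)^{t,\alpha}$ agrees with the norm inherited from $C_r(G)\rtimes_{\alpha,r}\Gamma$), and these details are all accurate.
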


\bp The Haar state on $\Pol(G)\rtimes_\alpha\Gamma$ coincides with the composition of the Haar state on $\Pol(G)$ with the canonical conditional expectation $\Pol(G)\rtimes_\alpha\Gamma\to\Pol(G)$, $a\otimes g\mapsto \delta_{g,e}a$. It follows that the completion of $\Pol(G)\rtimes_\alpha\Gamma$ in the corresponding GNS-representation coincides with $C_r(G)\rtimes_{\alpha,r}\Gamma$. Since the Haar state on $\Pol(G)^{t,\alpha}$ is the restriction of that on $\Pol(G)\rtimes_\alpha\Gamma$, we get the assertion.
\ep

As is common in operator algebra, from now on we denote the elements $a\otimes g\in \Pol(G)\rtimes_\alpha\Gamma$ by $a\lambda_g$.

Let us now say a few words about the case of an almost adjoint action. Denote by $H^1(\hat G;\T)$, resp.~by $H^1_G(\hat G;\T)$, the subgroup of $H^1(\Pol(G))$, resp.~of $H^1_\ell(\Pol(G))$, consisting of unitary elements. Then, when talking about compact quantum groups, we assume that an almost adjoint action is given by a pair $(\phi,\mu)$ such that $\phi$ takes values in $H^1(\hat G;\T)$, while $\mu$ is a cocycle with values in $H^1_G(\hat G;\T)$. Given such an invariant cocentral action, we get a cocycle $c\in Z^3(\Ch(G);\T)$ by Proposition~\ref{prop:3-cocycle} and a pseudo-$2$-cocycle $\sigma\in \U(G\times G)$ by Theorem~\ref{thm:pseudo-2-cocycle}. This pseudo-cocycle is unitary, and in our present notation identity \eqref{eq:associator2} becomes
\begin{equation}\label{eq:associator3}
(\iota\otimes\Dhat)(\sigma^{-1})(1\otimes\sigma^{-1})(\sigma\otimes1)(\Dhat\otimes\iota)(\sigma)=c\ \ \text{in}\ \ \U(G\times G\times G),
\end{equation}
where we view $c$ as an element of $\U(G^3)$ using the embedding of the function algebra on $\Ch(G)^3$ into $\U(G^3)$ dual to $p^{\otimes 3}\colon\Pol(G)^{\otimes 3}\to(\C\Ch(G))^{\otimes 3}$, and we omit the notation for the convolution product on $\U(G^n)$.

\begin{rem}\label{rem:abelian-case-and-lifting}
The compact group $T=H^1(\hat G;\T)$ is a closed subgroup of $G$: the surjective Hopf $*$-algebra homomorphism $\Pol(G)\to\Pol(T)$ maps $a\in\Pol(G)$ into the function that takes value $u(a)$ at $u\in T$. If $\Ch(G)$ is abelian, then its dual $H^1_G(\hat{G}; \T)$ is a central subgroup of $T$, and the pseudo-$2$-cocycle $\sigma$ is in $\U(T \times T)$. If furthermore $T$ is abelian, then $\sigma$ can be regarded as a $2$-cochain on $\hat T$ whose coboundary is the inverse of the lift of the $3$-cocycle $c$ on $\Ch(G)$ to~$\hat T$.
\end{rem}

\subsection{Orthogonal, hyperoctahedral and unitary quantum groups}

In this section we consider some concrete examples of our twisting procedure.

\begin{example}[Orthogonal quantum groups] \label{ex:orth}
Let $F \in M_n(\C)$ be a matrix satisfying $F\overline{F} = c I_n$, $c \in \mathbb R^*$. Recall that the free orthogonal group $\mathrm{O}_F^+$ \cites{MR1382726, MR1378260} is defined
as follows. The algebra $\Pol(\mathrm{O}_F^+)$ is the universal unital $*$-algebra generated by entries of an $n$-by-$n$ matrix $U=(u_{ij})_{i,j}$ subject to the relations $U^*U=UU^*=I_n$ and $F\bar UF^{-1}=U$, where $\bar U=(u^*_{ij})_{i,j}$.
 The C$^*$-envelope of $\Pol(\mathrm{O}^+_F)$ is denoted by $A_o(F)$.

It is easy to check that, as a Hopf algebra, $\mathcal O(\mathrm{O}^+_F)$ is the Hopf algebra $\mathcal B((F^{-1})^t)$ in Example~\ref{ex-be}. The considerations there can be easily adapted: let  $Q \in \GL(n,\mathbb C)$ be a unitary matrix such that $Q^2=\pm I_n$, $Q^t\bar FQ=\bar F$. We  obtain an almost adjoint invariant cocentral action $\alpha_Q$ of $\ZZ_2$ on $\Pol(\mathrm{O}^+_F)$, corresponding to the group homomorphism $\ZZ_2 \to H^1(\hat{\mathrm{O}}_F^+; \T) / H^1_{\mathrm{O}_F^+}(\hat{\mathrm{O}}_F^+; \T)$ sending the nontrivial element to the class of $Q$,
 with $\alpha_Q(U)= QUQ^{-1}$, and a twisted compact quantum group $(\mathrm{O}_F^+)^{t,\alpha_Q}$. Similarly to Example \ref{ex-be}, the compact quantum group $(\mathrm{O}_F^+)^{t,\alpha_Q}$ is isomorphic to $\mathrm{O}_{FQ^t}^+$.

If $F=I_n$, then $\mathrm{O}_F^+$ is denoted by $\mathrm{O}_n^+$, and the C$^*$-envelope of $\Pol(\mathrm{O}^+_n)$ is denoted by $A_o(n)$.
A matrix $Q$ as before is a real orthogonal matrix with $Q^2=\pm I_n$, and there are two cases:
\begin{enumerate}
 \item $Q^2=I_n$: in this case the matrix $Q$ is symmetric and $\mathrm{O}^+_Q$ is isomorphic to $\mathrm{O}^+_n$.
\item $Q^2=-I_{n}$: in this case necessarily $n$ is even and the matrix $Q$ is anti-symmetric. Without changing the isomorphism class of $\mathrm{O}^+_Q$, we can assume \cite{MR2202309} that $Q$ is the matrix
$$
J_{2m}=\left(\begin{array}{cc}0_m & I_m \\-I_m & 0_m\end{array}\right).
$$
The category $\Rep (\mathrm{O}^+_n)^{t,\alpha_Q}$ is the nontrivial twist $(\Rep \mathrm{O}^+_n, \Phi)$ for the associator $\Phi$ acting on the three-fold tensor product of irreducible representations $U_{a/2} \otimes U_{b/2} \otimes U_{c/2}$ by $(-1)^{a+b+c}$, where $U_{a/2}$ is the irreducible representation of spin $a/2$ for $a \in \N$.
\end{enumerate}
For $n=2$, we have $A_o(J_{2})\simeq C(\SU(2))$, while $A_o(2) \simeq C(\SU_{-1}(2))$, so we recover the twisting of $\SU(2)$ into $\SU_{-1}(2)$ in \cite{MR3340190}.
\end{example}

The above procedure for twisting $\mathrm{O}_{2m}^+$ works for any quantum subgroup $\ZZ_2 \subset H \subset \mathrm{O}_{2m}^+$ such that $J_{2m} \in H^1(\hat{H}; \T)$. We will have a brief look at two examples of this situation.

\begin{example}[Half-liberated orthogonal quantum groups]
Recall \cite{MR2554941} that the half-liberated orthogonal quantum group $\mathrm{O}_n^*$ is defined as follows: $\mathcal O(\mathrm{O}_n^*)$ is the quotient of $\mathcal O(\mathrm{O}_n^+)$ by the relations
$abc=cba$, $a,b,c \in \{u_{ij}\}$.
We get a twisted half-liberated orthogonal quantum group $\mathrm{O}_{J_{2m}}^*= (\mathrm{O}_{2m}^*)^{t,\alpha_{J_{2m}}}$, with $\mathcal O(\mathrm{O}_{J_{2m}}^*)$ the quotient of $\mathcal O(\mathrm{O}_{J_{2m}}^+)$ by the same relations as above.
\end{example}

\begin{example}[Hyperoctahedral quantum groups]
Recall \cites{MR2096666, MR2376808}
that the hyperoctahedral quantum group $\mathrm{H}_n^+$ is defined as follows: $\mathcal O(\mathrm{H}_n^+)$ is the quotient of $\mathcal O(\mathrm{O}_n^+)$ by the relations $u_{ij}u_{ik}=0=u_{ji}u_{ki}$ if $j\not=k$. We obtain a twisted hyperoctahedral quantum group $\mathrm{H}_{J_{2m}}^+$, with $\mathcal O(\mathrm{H}_{J_{2m}}^+)$ the quotient of  $\mathcal O(\mathrm{O}_{J_{2m}}^+)$ by the relations $u_{ij}u_{kl}=0$ if $i+k=m+1$ and $j+l\not=m+1$, or $i+k\not=m+1$ and $j+l=m+1$. Similar considerations also work for the hyperoctahedral series $\mathrm{H}_n^{(s)} \subset \mathrm{H}_n^* \subset \mathrm{H}_{n}^{[s]}$ considered in \cite{MR2718205}.
\end{example}

\begin{example}[Unitary quantum groups]
Recall that the free unitary quantum group $\mathrm{U}_n^+$ \cite{MR1316765} is defined as follows: the algebra $\Pol(\mathrm{U}_n^+)$ is the universal unital $*$-algebra generated by entries of an $n$-by-$n$ matrix $U=(u_{ij})_{i,j}$ subject to the relations $U^*U=UU^*=I_n$ and $\bar U U^t=I_n=U^t \bar U$, where $\bar U=(u^*_{ij})_{i,j}$.
 The C$^*$-envelope of $\Pol(\mathrm{U}^+_n)$ is denoted by $A_u(n)$.

We have $\mathcal O(\mathrm{U}_n^+) \simeq \Hpf(I_n)$ as Hopf algebras, where $\Hpf(I_n)$ is the Hopf algebra of Example~\ref{ex-freeproduct}, and the constructions there can be adapted easily, showing that $\mathcal O(\mathrm{U}_n^+)$ is a graded twist of $\mathcal O(\mathrm{O}_n^+)*\mathcal O(\mathrm{O}_n^+)$.
\end{example}

The above construction works for any quantum group $H$ with $\mathbb Z_2 \subset H \subset \mathrm{O}_n^+$, to produce a graded twisting of $\mathcal O(H)*\mathcal O(H)$.
To conclude the section, we briefly mention two more examples.

\begin{example}[Half-liberated unitary quantum groups] We put $H=\mathrm{O}_n^*$ above. The twisted version of  $\mathcal O(\mathrm{O}_n^*)*\mathcal O(\mathrm{O}_n^*)$ we obtain is $\mathcal O(\mathrm{U}_n^*)$, defined in \cite{MR2835874}, i.e.,~the quotient of $\mathcal O(\mathrm{U}_n^+)$ by the relations $ab^*c=cb^*a$, $a,b,c \in \{u_{ij}\}$.
\end{example}

\begin{example}
Now put $H=\mathrm{O}_n$. Then the twisted version of  $\mathcal O(\mathrm{O}_n)*\mathcal O(\mathrm{O}_n)$ we obtain is $\mathcal O(\mathrm{U}_n^{\#})$, recently defined in \cite{arXiv:1505.00646}, i.e.,~the quotient of $\mathcal O(\mathrm{U}_n^+)$ by the relations $ab^*=ba^*$, $a,b \in \{u_{ij}\}$.
\end{example}

\subsection{Twisting and coamenability}

Let us return to the general case of compact quantum groups. Recall that a compact quantum group $G$ is called \emph{coamenable} if the counit on $\Pol(G)$ extends to a bounded linear functional on $C_r(G)$, in which case the C$^*$-algebra $C_r(G)$ coincides with the C$^*$-envelope of $\Pol(G)$ and we simply write $C(G)$. In this section we explore implications of coamenability for twisting.

\smallskip

We start with the following simple observation.

\begin{proposition}\label{prop:enveloping}
 Let $G$ be a coamenable compact quantum group and $(p,\alpha)$ be an invariant cocentral action of $\Gamma$ on $\Pol(G)$. Then the image of $p$ is contained in $\C\Gamma_0$ for an amenable subgroup $\Gamma_0$ of $\Gamma$, and the compact quantum group $G^{t,\alpha}$ is coamenable.
\end{proposition}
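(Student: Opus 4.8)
The plan is to prove the two assertions in order, the first being needed for the second. I would begin with the image of $p$. Since $p\colon\Pol(G)\to\C\Gamma$ is a Hopf $*$-algebra homomorphism, its image is a sub-Hopf-$*$-algebra of $\C\Gamma$. As $\C\Gamma=\bigoplus_{g}\C g$ is a cosemisimple coalgebra whose group-like elements are exactly the elements of $\Gamma$, every subcoalgebra is the linear span of the group-likes it contains; being moreover closed under product and antipode, the set $\Gamma_0$ of those group-likes is a subgroup of $\Gamma$, and $p(\Pol(G))=\C\Gamma_0$. Thus $p$ is a surjective Hopf $*$-algebra map $\Pol(G)\to\C\Gamma_0=\Pol(\hat\Gamma_0)$, exhibiting the dual $\hat\Gamma_0$ as a closed quantum subgroup of $G$. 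Since coamenability is inherited by closed quantum subgroups, and $\hat\Gamma_0$ is coamenable precisely when $\Gamma_0$ is amenable (for then the trivial representation $g\mapsto1$ of $\C\Gamma_0$ extends to $C^*_r(\Gamma_0)$), the coamenability of $G$ forces $\Gamma_0$ to be amenable. This settles the first claim.

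Next I would pin down the support of the grading. For $a\in\Pol(G)_g$ we have $p(a)=\eps(a)g$, and taking $a$ to be a diagonal matrix coefficient $u_{ii}$ of an irreducible unitary corepresentation of degree $g$ gives $\eps(u_{ii})=1$; hence $\eps$ is nonzero on $\Pol(G)_g$ whenever $\Pol(G)_g\ne0$, so $\{g:\Pol(G)_g\ne0\}=\Gamma_0$. Consequently every spanning element $a\lambda_g$ of $\Pol(G)^{t,\alpha}$ has $g\in\Gamma_0$, whence $\Pol(G)^{t,\alpha}\subset C_r(G)\rtimes_{\alpha,r}\Gamma_0\subset C_r(G)\rtimes_{\alpha,r}\Gamma$, the last inclusion being the canonical isometric embedding of the reduced crossed product over a subgroup. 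By Proposition~\ref{prop:reduced-algebra}, $C_r(G^{t,\alpha})$ is the norm closure of $\Pol(G)^{t,\alpha}$, so $C_r(G^{t,\alpha})\subset C_r(G)\rtimes_{\alpha,r}\Gamma_0$.

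I would then build the required extension of the counit. Coamenability of $G$ gives $C_r(G)=C(G)$ together with a character $\eps$ extending the counit; as each $\alpha_g$ is a Hopf algebra automorphism it preserves the counit, and (preserving the Haar state) extends to $C(G)$ with $\eps\circ\alpha_g=\eps$. Thus the pair consisting of $\eps$ and the trivial representation $g\mapsto1$ of $\Gamma_0$ is a covariant representation of $(C(G),\Gamma_0,\alpha)$ on $\C$, yielding a character $\Psi$ of the full crossed product $C(G)\rtimes_\alpha\Gamma_0$ with $\Psi(a\lambda_g)=\eps(a)$. Here the amenability of $\Gamma_0$ proved above is used decisively: it gives $C_r(G)\rtimes_{\alpha,r}\Gamma_0=C(G)\rtimes_\alpha\Gamma_0$, so $\Psi$ is in fact a bounded functional on the reduced crossed product. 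Restricting $\Psi$ to $C_r(G^{t,\alpha})$ gives a bounded functional sending a generator $a\lambda_g$ (with $a\in\Pol(G)_g$) to $\eps(a)=\couni(a\lambda_g)$, which is exactly the counit of $G^{t,\alpha}$. Hence the counit of $G^{t,\alpha}$ extends boundedly to $C_r(G^{t,\alpha})$, i.e.\ $G^{t,\alpha}$ is coamenable.

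The genuinely substantive point is the amenability of $\Gamma_0$: once that is in hand the rest is a covariance and universal-property computation, but without it the final passage from the full to the reduced crossed product fails, and there is no reason for the counit character to survive on $C_r(G)\rtimes_{\alpha,r}\Gamma$ when $\Gamma$ is non-amenable. I therefore expect the real content to lie in identifying $p(\Pol(G))$ with $\C\Gamma_0$ and invoking inheritance of coamenability by the quantum subgroup $\hat\Gamma_0$; the remaining verifications — that the grading is supported on $\Gamma_0$, and that the pair $(\eps,\mathrm{triv})$ is covariant — are routine.
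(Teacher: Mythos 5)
Your proof is correct and follows essentially the same route as the paper: the paper also identifies $\Gamma_0$ as the image of the (amenable, by heredity of coamenability to the quantum subgroup dual to $\Ch(G)$) chain group, obtains $C_r(G^{t,\alpha})\subset C(G)\rtimes_r\Gamma_0$ from Proposition~\ref{prop:reduced-algebra}, and extends the counit through the crossed product using amenability of $\Gamma_0$. Your only variations are cosmetic: you identify $p(\Pol(G))=\C\Gamma_0$ directly via group-like elements rather than through the universal $\Ch(G)$-grading, and you realize the extended counit as a covariant character on the full crossed product (then invoke full $=$ reduced for amenable $\Gamma_0$) where the paper composes the equivariantly induced map $\couni\otimes\iota\colon C(G)\rtimes_r\Gamma_0\to C^*_r(\Gamma_0)$ with the counit of $C^*_r(\Gamma_0)$.
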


\begin{proof}
First of all observe that the cocentral $*$-homomorphism $\Pol(G) \to \C\Ch(G)$ realizes the dual quantum group of $\Ch(G)$ as a quantum subgroup of $G$. Since coamenability passes to quantum subgroups, the quantum group $\widehat{\Ch(G)}$ must be coamenable, hence $\Ch(G)$ is amenable. The $\Gamma$-grading on $\Pol(G)$ is defined by a homomorphism $\Ch(G)\to\Gamma$. Hence the image $\Gamma_0$ of $\Ch(G)$ in $\Gamma$ is amenable.

By Proposition~\ref{prop:reduced-algebra}, we have $C_r(G^{t,\alpha})\subset C(G)\rtimes_r\Gamma_0$.
Since the counit map $\couni \colon C(G) \to \C$ is $\Gamma$-equivariant, it induces a $*$-homomorphism $\couni \otimes \iota\colon C(G) \rtimes_r \Gamma_0 \to \C \rtimes_r \Gamma_0 = C^*_r(\Gamma_0)$. The counit on $C_r(G^{t,\alpha})$ is the composition of this homomorphism with the counit $C^*_r(\Gamma_0)\to\C$. Hence it is bounded.
\end{proof}

If $\alpha$ is almost adjoint, the above result follows also from the fact that $G$ and $G^{t,\alpha}$ have the same fusion rules and the classical dimension functions, see \cite{MR1679171} or \cite{MR3204665}*{Section~2.7}.

\smallskip

Our goal is to show that coamenability puts restrictions on applicability of our twisting procedure. We start with an auxiliary categorical consideration. Let $\A$ be an essentially small rigid C$^*$-tensor category. (We assume that the C$^*$-tensor categories that we consider in this section have simple units and are closed under finite direct sums and subobjects.) Consider the set~$I_\A$ of isomorphism classes of simple objects in $\A$. For every $s\in I_\A$ choose a representative~$U_s$. For every object~$U$ let $\Gamma_U\in B(\ell^2(I_\A))$ be the operator such that its matrix coefficient corresponding to $s,t\in I$ is the multiplicity of $U_s$ in $U\otimes U_t$.

Given another rigid C$^*$-tensor category $\BB$ and a unitary tensor functor $F\colon \A\to \BB$, the functor $F$ is called \emph{amenable} if the fusion ring of $\A$ is amenable (see~\cite{MR1644299}) and $\|\Gamma_U\|=d^\BB(F(U))$ for every object $U$ in $\A$, where $d^\BB$ denotes the quantum dimension. It is shown in~\cite{arXiv:1405.6572} that once the fusion ring of $\A$ is amenable, there exists a universal amenable functor $\Pi\colon\A\to\PP$. Furthermore, such a universal amenable functor can be constructed as follows. Assume $\A$ is a subcategory of a rigid C$^*$-tensor category~$\BB$ such that $\|\Gamma_U\|=d^\BB(U)$ for all objects $U$ in $\A$. For every object $U$ in $\A$ choose a standard solution $(R_U,\bar R_U)$ of the conjugate equations for $U$ in $\A$. Then there exists a unique positive automorphism $a_U$ of $U$ in $\BB$ such that $((\iota\otimes a_U^{1/2})R_U,(a_U^{-1/2}\otimes\iota)\bar R_U)$ is a standard solution of the conjugate equations for $U$ in~$\BB$. Then as $\PP$ we can take the C$^*$-tensor subcategory of $\BB$ generated by $\A$ and the morphisms~$a_U$, and as $\Pi$ we can take the embedding functor. In fact, this description of $\Pi\colon \A\to\PP$ was given in~\cite{arXiv:1405.6572}*{Section~4} for strict C$^*$-tensor categories. But since every category is equivalent to a strict one, it is clear that it remains true in general.

Consider now a coamenable compact quantum group $G$. Coamenability means exactly that the canonical fiber functor $\Rep G\to\Hilb_f$ is amenable. In this case it can be shown, see~\cite{arXiv:1405.6574}*{Theorem~2.1}, that the universal amenable unitary tensor functor is the restriction functor $\Rep G\to \Rep K$, where $K\subset G$ is the maximal closed quantum subgroup of Kac type, that is, $\Pol(K)$ is the quotient of $\Pol(G)$ by the two-sided ideal generated by the elements $a-S^2(a)$ for $a \in \Pol(G)$.

Before we formulate the main result of this section, let us also introduce another piece of notation. Given a cocycle $c\in Z^3(\Ch(G);\T)$, denote by $(\Rep G)^c$ the monoidal category $\Rep G$ with the new associativity morphisms such that
$(U\otimes V)\otimes W\to U\otimes (V\otimes W)$ is the scalar morphism $c(g,h,k)$ for $[U]=g$, $[V]=h$ and $[W]=k$. We can  also assume that the cocycle~$c$ is normalized,
$$
c(e,g,h)=c(g,e,h)=c(g,h,e)=1,
$$
without loss of generality.

\begin{theorem}
Let $G$ be a coamenable compact quantum group with maximal closed quantum subgroup $K$ of Kac type. Assume we are given a $3$-cocycle $c\in Z^3(\Ch(G);\T)$. Then $\Ch(G)$ is a quotient of $\Ch(K)$ and the restriction functor $(\Rep G)^c\to(\Rep K)^c$ is a universal amenable unitary tensor functor.
\end{theorem}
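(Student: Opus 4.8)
The plan is to prove the two assertions in turn, treating the $3$-cocycle $c$ throughout as a mild, associator-only perturbation that leaves all the combinatorial data (fusion rules, quantum dimensions, the operators $\Gamma_U$) untouched.

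For the first assertion I would work with the universal cocentral maps of Lemma~\ref{lem:gradings}. Let $p_G\colon\Pol(G)\to\C\Ch(G)$ be the universal cocentral homomorphism. Since $\C\Ch(G)$ is a group algebra, its antipode is involutive, so $p_G(a-S^2(a))=p_G(a)-S^2(p_G(a))=0$; hence $p_G$ kills the defining ideal of the Kac quotient and factors as $p_G=q\circ\pi$, where $\pi\colon\Pol(G)\to\Pol(K)$ is the quotient map and $q\colon\Pol(K)\to\C\Ch(G)$. Applying $\iota\otimes\pi$ to the cocentrality identity for $p_G$ and using surjectivity of $\pi$ shows that $q$ is cocentral, and it is surjective because $p_G$ is. By the universal property of $p_K\colon\Pol(K)\to\C\Ch(K)$, the map $q$ factors through $p_K$, producing a surjective Hopf algebra homomorphism $\C\Ch(K)\to\C\Ch(G)$, i.e.\ a surjection of groups $\Ch(K)\to\Ch(G)$. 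This exhibits $\Ch(G)$ as a quotient of $\Ch(K)$; it lets me pull $c$ back to $Z^3(\Ch(K);\T)$, and the relation $p_G=(\text{group algebra of }\rho)\circ p_K\circ\pi$ shows that the $\Ch(G)$-degree of an object is the image under $\rho\colon\Ch(K)\to\Ch(G)$ of its $\Ch(K)$-degree after restriction. The associator scalars therefore match, so the restriction $R\colon(\Rep G)^c\to(\Rep K)^c$ is a unitary tensor functor.

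Next I would check that $R$ is amenable. The fusion ring of $(\Rep G)^c$ equals that of $\Rep G$, hence is amenable by coamenability; the operators $\Gamma_U$ depend only on fusion multiplicities and so are unchanged by $c$; and the quantum dimensions are unchanged, since reassociating a conjugate-equation zigzag for $U$ introduces only the phase $c([U],[U]^{-1},[U])^{\pm1}$, which rescales a standard solution without altering $\|R_U\|\,\|\bar R_U\|$. As the untwisted restriction $\Rep G\to\Rep K$ is amenable by \cite{arXiv:1405.6574}*{Theorem~2.1}, I obtain $\|\Gamma_U\|=d^{\Rep K}(RU)=d^{(\Rep K)^c}(RU)$ for all $U$, so $R$ is amenable. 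For universality I would transport the explicit construction of the universal amenable functor recalled before the statement. That construction uses only the dimension-matching embedding, a choice of standard solutions $(R_U,\bar R_U)$, the positive endomorphisms $a_U$ determined by standardness in the ambient category, and the subcategory they generate together with the image. Each datum is insensitive to $c$ up to invertible chain-group scalars: standard solutions transport up to a phase, and since the defect scalar is the same on all of $\End(U)=\C$ it cancels between $a_U^{1/2}$ and $a_U^{-1/2}$, leaving every $a_U$ literally unchanged. Running the construction for $(\Rep G)^c$ inside $(\Rep K)^c$ thus produces the same underlying linear category $\Rep K$, now carrying the $c$-associators, namely $(\Rep K)^c$, with universal functor the restriction; hence $R$ is a universal amenable unitary tensor functor.

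The main obstacle is precisely the scalar bookkeeping in this last step. I must track the factors $c(g,h,k)$ through the conjugate equations, through the tensor products of morphisms used to generate $\PP$, and through the standardness and positivity conditions defining the $a_U$, and verify that they genuinely cancel, so that the generated category and the morphisms $a_U$ coincide with the untwisted ones and the universal property itself survives the twist. The formal manipulations of the first two parts are routine; it is this verification that $(\Rep G)^c\to(\Rep K)^c$ reproduces the untwisted universal construction which carries the real content.
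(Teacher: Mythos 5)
Your proposal is correct and follows essentially the same route as the paper's proof: factoring the universal cocentral homomorphism through the Kac quotient to get the surjection $\Ch(K)\to\Ch(G)$, noting amenability since $c$ changes neither fusion rules nor quantum dimensions, transporting standard solutions up to a chain-group phase (the paper uses $(R_U,\,c(g^{-1},g,g^{-1})\bar R_U)$) so that the morphisms $a_U=\rho_U$ are literally unchanged, and running the explicit construction of the universal amenable functor inside $(\Rep K)^c$. The ``scalar bookkeeping'' you flag as the remaining obstacle is exactly the paper's short closing argument and is immediate: since the associators of $(\Rep K)^c$ are scalars, every composition of morphisms of $\Rep G$ with morphisms of the form $\iota_X\otimes\rho_Y\otimes\iota_Z$ that witnesses generation of $\Rep K$ still makes sense in $(\Rep K)^c$ under any distribution of brackets, so the generated C$^*$-tensor category is all of $(\Rep K)^c$.
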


\bp The first statement is obvious, since the cocentral Hopf algebra homomorphism $\Pol(G)\to\C\Ch(G)$ factors through $\Pol(K)$ by the maximality of $K$.

Next, consider the Woronowicz character $\rho\in\U(G)$. For every representation $U\in B(H_U)\otimes\Pol(G)$ of $G$ it defines an operator $\rho_U=(\iota\otimes\rho)(U)$ on $H_U$. Then $\Rep K$ can be described as the C$^*$-tensor subcategory of $\Hilb_f$ generated by $\Rep G$ and the morphisms $\rho_U$, see the proof of~\cite{arXiv:1405.6574}*{Theorem~2.1}. In order to connect this with the above discussion of the construction of amenable functors, recall that $\rho_U$ are exactly the morphisms $a_U$ we discussed above for the categories $\Reg G\subset\Hilb_f$ and a suitable choice of standard solutions in $\Rep G$.

Consider the restriction functor $(\Rep G)^c\to(\Rep K)^c$. This functor is amenable, since the new associativity morphisms change neither fusion rules nor quantum dimensions. In order to show that this functor is universal we have to find the morphisms $a_U$ relating solutions of the conjugate equations in $(\Rep G)^c$ and $(\Rep K)^c$, and then show that $(\Rep K)^c$ is generated by $(\Rep G)^c$ and these morphisms. Given an irreducible representation $U$ of $G$ of degree $g\in\Ch(G)$ and a standard solution $(R_U,\bar R_U)$ of the conjugate equations for $U$ in $\Rep G$, the pair $(R_U^c,\bar R^c_U)=(R_U,c(g^{-1},g,g^{-1})\bar R_U)$ forms a standard solution for $U$ in $(\Rep G)^c$. From this it becomes clear that the morphism $a_U$ in $(\Rep K)^c$ such that $((\iota\otimes a_U^{1/2})R^c_U,(a_U^{-1/2}\otimes\iota)\bar R^c_U)$ is a standard solution of the conjugate equations for $U$ in $(\Rep K)^c$ is the same as in the case of the trivial cocycle, that is,~$a_U=\rho_U$.

It remains to show that $(\Rep K)^c$ is generated by $(\Rep G)^c$ and the morphisms $\rho_U$. Take representations $U$ and $V$ of $G$. Then the fact that $\Rep K$ is generated as a C$^*$-tensor category by $\Rep G$ and the morphisms $\rho_W$ means that any morphism $U|_K\to V|_K$ can be written as a linear combination of compositions of morphisms $U'\to V'$ in $\Rep G$ and morphisms of the form $\iota_{X}\otimes\rho_Y\otimes\iota_Z$. But the same compositions makes sense in $(\Rep K)^c$, with any distribution of brackets on $X\otimes Y\otimes Z$. Hence the tensor category generated by $(\Rep G)^c$ and $\rho_W$ contains all morphisms $U\to V$ in $(\Rep K)^c$. This proves the assertion.
\ep

The above theorem can be used to simplify the considerations in~\cite{arXiv:1405.6574}*{Section~3.1}. More importantly for the present work, it gives the following.

\begin{corollary} \label{cfib}
Under the assumptions of the theorem, a unitary fiber functor $F\colon(\Rep G)^c\to\Hilb_f$ such that $\dim F(U)=\dim U$ for all $U$ exists if and only if there exists a unitary fiber functor $(\Rep K)^c\to\Hilb_f$.
\end{corollary}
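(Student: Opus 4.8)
The plan is to read this off from the universal property of the amenable functor established in the theorem. Write $\Pi\colon(\Rep G)^c\to(\Rep K)^c$ for the restriction functor, which by the theorem is \emph{the} universal amenable unitary tensor functor; recall that universality (in the sense of~\cite{arXiv:1405.6572}) means precisely that every amenable unitary tensor functor out of $(\Rep G)^c$ factors through $\Pi$. Both implications will come down to recognizing when a fiber functor is amenable.

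For the forward direction, suppose $F\colon(\Rep G)^c\to\Hilb_f$ is a unitary fiber functor with $\dim F(U)=\dim U$ for all $U$. First I would check that $F$ is amenable. Since changing the associator affects neither the fusion rules nor the operators $\Gamma_U$, the fusion ring of $(\Rep G)^c$ coincides with that of $\Rep G$, which is amenable because $G$ is coamenable. Coamenability means exactly that the canonical fiber functor on $\Rep G$ is amenable, so $\|\Gamma_U\|=\dim U$; combined with the hypothesis $\dim F(U)=\dim U$ this yields $\|\Gamma_U\|=\dim F(U)=d^{\Hilb_f}(F(U))$, which is precisely the amenability condition for $F$. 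The universal property of $\Pi$ then gives a factorization $F\simeq E\circ\Pi$ for some unitary tensor functor $E\colon(\Rep K)^c\to\Hilb_f$, and $E$ is the sought fiber functor.

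For the converse, given a unitary fiber functor $E\colon(\Rep K)^c\to\Hilb_f$, the composite $E\circ\Pi$ is manifestly a unitary fiber functor on $(\Rep G)^c$, and it only remains to check that it is dimension preserving. Here I would use that $K$ is of Kac type, so the quantum and classical dimensions agree on $\Rep K$, and hence on $(\Rep K)^c$ since the associator does not change quantum dimensions. As any unitary tensor functor sends standard solutions of the conjugate equations to standard solutions, $E$ preserves quantum dimensions, so $\dim E(V)=d(V)=\dim V$ for every object $V$ of $(\Rep K)^c$. Finally $\Pi(U)=U|_K$ has the same classical dimension as $U$, whence $\dim(E\circ\Pi)(U)=\dim\Pi(U)=\dim U$.

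The routine part is the bookkeeping that passing to the twisted associator leaves fusion rules and quantum dimensions untouched. The conceptual heart of the argument—and the step I would be most careful about—is the identification of the dimension-preserving condition with amenability through the equality $\|\Gamma_U\|=\dim U$ furnished by coamenability; it is exactly this matching that allows the universal property of $\Pi$ to supply the factorization, with the Kac-type hypothesis on $K$ ensuring that dimension preservation comes for free on the other side.
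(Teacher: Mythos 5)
Your forward direction is essentially the paper's own argument: dimension preservation together with coamenability of $G$ (so that $\|\Gamma_U\|=\dim U$) makes $F$ amenable, and the universality of the restriction functor $\Pi\colon(\Rep G)^c\to(\Rep K)^c$ furnishes the factorization $F\simeq E\circ\Pi$. The gap is in your converse, specifically in the assertion that ``any unitary tensor functor sends standard solutions of the conjugate equations to standard solutions.'' This is false: a unitary tensor functor sends a standard solution to \emph{some} solution of the conjugate equations, which yields only the inequality $d(E(V))\le d(V)$, and quantum dimension can strictly decrease. The canonical fiber functor of $\Rep\SU_q(2)$, $q\ne1$, drops the quantum dimension $q+q^{-1}$ of the fundamental representation to $2$. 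Nor does your appeal to the Kac property rescue the step: $\Rep \mathrm{O}_n^+$ for $n\ge 3$ is of Kac type with $d(U_1)=\dim U_1=n$, yet it is unitarily monoidally equivalent to $\Rep\SU_q(2)$ for a suitable $q$ with $|q+q^{-1}|=n$ (see \cite{MR2202309}), so composing with the canonical fiber functor of $\SU_q(2)$ gives a unitary fiber functor with $\dim E(U_1)=2<n$. Hence ``Kac type $\Rightarrow$ every unitary fiber functor is dimension-preserving'' is simply not true; what is missing from your argument is the \emph{amenability} of $(\Rep K)^c$, which is exactly what the paper invokes in its parenthetical remark.

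The correct justification sandwiches the dimension. Since coamenability passes to quantum subgroups, $K$ is coamenable; being also of Kac type, one has $\|\Gamma_V\|=d(V)=\dim V$ for all $V$ in $\Rep K$, and the cocycle $c$ changes neither fusion rules nor quantum dimensions, so $(\Rep K)^c$ is amenable. Now for any unitary fiber functor $E\colon(\Rep K)^c\to\Hilb_f$: on one side, $\dim E(V)=d^{\Hilb_f}(E(V))\le d(V)=\dim V$, by the genuine (weaker) fact that $E$ maps standard solutions to solutions; on the other side, $V\mapsto\dim E(V)$ is a dimension function on the fusion ring, and every dimension function dominates $\|\Gamma_V\|$ (standard in the Hiai--Izumi framework \cite{MR1644299}), whence $\dim E(V)\ge\|\Gamma_V\|=\dim V$. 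Therefore $E$, and with it $E\circ\Pi$, is dimension-preserving. Your conclusion in the converse direction is thus correct, but it is delivered by amenability, not by any preservation of standard solutions, and as stated your proof of that half does not go through.
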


\bp Since a unitary fiber functor $F\colon(\Rep G)^c\to\Hilb_f$ such that $\dim F(U)=\dim U$ is amenable, by universality it must factor through $(\Rep K)^c$, in a unique up to a natural unitary monoidal isomorphism way. This proves the corollary in one direction, and the other direction is obvious. (It is worth recalling that since $(\Rep K)^c$ is amenable, any unitary fiber functor $(\Rep K)^c\to\Hilb_f$ is dimension-preserving.)
\ep

Therefore if, for example, there are no unitary fiber functor $(\Rep K)^c\to\Hilb_f$, then for any twist $G^{t,\alpha}$ of $G$ we cannot have the category $(\Rep G)^c$ as the representation category of $G^{t,\alpha}$.

\subsection{Drinfeld--Jimbo deformations of compact Lie groups}


Consider a compact simply connected semisimple Lie group $G$ and, for $q>0$, its $q$-deforma\-tion~$G_q$. (In fact, with a suitable definition of $q$-deformation, everything what follows remains true in the non-simply-connected case.) We denote the coproduct on $\U(G_q)$ by $\Dhat_q$.

Let $T$ be the maximal torus in $G$ which remains undeformed in $G_q$. The center $Z(G)$ of $G$ is well-known to be contained in $T$, and its dual is~$P/Q$, where $P$ and $Q$ are the weight and root lattices, respectively. It is also known that for $q\ne1$ we have
$$
H^1(\hat G_q;\T)=T\ \ \text{and}\ \ \Ch(G_q)=\widehat{Z(G)}=P/Q.
$$
In particular, we are in the setting of Remark~\ref{rem:abelian-case-and-lifting}, so our construction of pseudo-$2$-cocycles simply produces particular $2$-cochains on $\hat T=P$ with coboundary living on $P/Q$. The following result shows that there is essentially no other way of constructing pseudo-$2$-cocycles with coboundary living on $\Ch(G_q)$.

\begin{theorem}\label{thm:3coc-2pseudo}
With the above notation, assume $q>0$, $q\ne1$, and $c\in Z^3(P/Q;\T)$. Then the following conditions are equivalent:
\begin{enumerate}
\item there is a unitary fiber functor $F\colon(\Rep G_q)^c\to\Hilb_f$ such that $\dim F(U)=\dim U$ for all finite dimensional unitary representations $U$ of $G_q$;
\item there exists a unitary $\sigma\in\U(G_q\times G_q)$ such that
\begin{equation} \label{eassoc}
(\iota\otimes\Dhat_q)(\sigma^{-1})(1\otimes\sigma^{-1})(\sigma\otimes1)(\Dhat_q\otimes\iota)(\sigma)=c;
\end{equation}
\item there exists a unitary $\sigma$ satisfying~\eqref{eassoc} obtained by the construction in Theorem~\ref{thm:pseudo-2-cocycle} (for a quotient $\Gamma$ of $P/Q$ and an action of $\Gamma$ on $C_r(G_q)$);
\item the lift of $c$ to $P$ is a coboundary;
\item the cocycle $c$ vanishes on $\wedge^3(P/Q)\subset H_3(P/Q;\ZZ)$.
\end{enumerate}

Furthermore, if these conditions are satisfied, then all unitaries $\sigma$ satisfying \eqref{eassoc} have the form
$$
\sigma=(u\otimes u)f\Dhat_q(u)^{-1},
$$
where $u\in\U(G_q)$ is a unitary element and $f$ is a $\T$-valued $2$-cochain on $P$ such that $\partial f=c^{-1}$.
\end{theorem}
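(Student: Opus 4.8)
The plan is to reduce the whole statement to the maximal torus $T$, which for $q\neq1$ is the maximal Kac-type quantum subgroup of the coamenable quantum group $G_q$, and then to exploit the fact that $(\Rep T)^c$ is a \emph{pointed} category. I would first dispose of the equivalence (i)$\Leftrightarrow$(ii) by Tannaka--Krein reconstruction: a dimension-preserving unitary fiber functor $F$ on $(\Rep G_q)^c$ may be normalized to agree with the forgetful functor on objects, so that its tensor structure $F_2$ is implemented by a unitary element of $\U(G_q\times G_q)$, say $\sigma^{-1}$. The only constraint on $F_2$ is compatibility with the associator of $(\Rep G_q)^c$ (which is $c$) and the trivial associator of $\Hilb_f$, and spelling this out is exactly equation \eqref{eassoc}. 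Conversely, any unitary $\sigma$ solving \eqref{eassoc} reconstructs such an $F$; thus (i) and (ii) encode the same datum.

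Next I would carry out the reduction to the torus. Since $G_q$ is coamenable and, for $q\neq1$, its maximal Kac-type quantum subgroup is the classical torus $T$, the theorem preceding Corollary~\ref{cfib} identifies the restriction $(\Rep G_q)^c\to(\Rep T)^c$ as the universal amenable unitary tensor functor. A dimension-preserving unitary fiber functor is amenable, so Corollary~\ref{cfib} gives that (i) holds if and only if $(\Rep T)^c$ admits a unitary fiber functor. Now $(\Rep T)^c$ is the pointed category $\Vect_P^{\tilde c}$ of $P=\hat T=\Ch(T)$-graded Hilbert spaces whose associator is the lift $\tilde c$ of $c$ along the quotient $\Ch(T)=P\twoheadrightarrow P/Q=\Ch(G_q)$. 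A pointed category $\Vect_P^{\omega}$ admits a fiber functor precisely when $\omega$ is a coboundary, the fiber functors being then parametrized by $\T$-valued $2$-cochains $f\in C^2(P;\T)$ with $\partial f=\omega^{-1}$. This establishes (i)$\Leftrightarrow$(iv) and simultaneously records the normal form needed for the final clause.

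The equivalence (iv)$\Leftrightarrow$(v) is then pure group cohomology of lattices. As $P$ is free abelian one has $H_\bullet(P;\ZZ)=\wedge^\bullet P$, and since $\T$ is injective, $H^3(P;\T)=\Hom(\wedge^3 P,\T)$; moreover the map induced by $\pi\colon P\to P/Q$ on $H_3$ factors as $\wedge^3 P\twoheadrightarrow\wedge^3(P/Q)\hookrightarrow H_3(P/Q;\ZZ)$. Hence $[\tilde c]=\pi^*[c]$ vanishes in $H^3(P;\T)$ exactly when $[c]$ annihilates $\wedge^3(P/Q)$, which is (v). To close the cycle I would prove (iv)$\Rightarrow$(iii)$\Rightarrow$(ii): a trivializing cochain $f$ with $\partial f=c^{-1}$ on $\hat T=P$ is precisely the output of the construction of Theorem~\ref{thm:pseudo-2-cocycle} in the abelian setting of Remark~\ref{rem:abelian-case-and-lifting} (for a suitable quotient $\Gamma$ of $P/Q$), so that $\sigma=f$ solves \eqref{eassoc} by a short cocycle computation, and (iii) is a special case of (ii).

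Finally, for the classification clause I would show every solution $\sigma$ of \eqref{eassoc} is a gauge transform of a torus cochain. Such a $\sigma$ defines a dimension-preserving, hence amenable, fiber functor $F_\sigma$ on $(\Rep G_q)^c$, which by the universal property factors through $(\Rep T)^c$, \emph{uniquely up to natural unitary monoidal isomorphism}, i.e. through a $2$-cochain $f$ on $P$ with $\partial f=c^{-1}$. The mediating natural unitary monoidal isomorphism is implemented by a unitary $u\in\U(G_q)$, and unwinding its monoidality turns the equality of tensor structures into $\sigma=(u\otimes u)f\Dhat_q(u)^{-1}$. I expect this last step to be the main obstacle: one must invoke the \emph{uniqueness} part of the universal property to guarantee that $F_\sigma$ and $F_f$ are monoidally (not merely linearly) isomorphic, and then track the coherence carefully enough that the gauge unitary genuinely lands in $\U(G_q)$ and yields exactly the stated coboundary form. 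The cohomological steps (iv)$\Leftrightarrow$(v) and the verification that $\sigma=f$ solves \eqref{eassoc} are routine by comparison.
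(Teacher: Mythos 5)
Your handling of the equivalences follows the paper's skeleton quite closely: (i)$\Leftrightarrow$(ii) by Tannaka--Krein reconstruction, and (i)$\Rightarrow$(iv) by reduction to the maximal Kac-type subgroup $T$ via Corollary~\ref{cfib}, exactly as in the paper. Two deviations deserve comment. For (iv)$\Leftrightarrow$(v) the paper simply cites \cite{MR3340190}*{Corollary~A.4}, whereas you give a direct homological argument ($H_3(P;\ZZ)\simeq\wedge^3P$ for the free abelian group $P$, divisibility of $\T$ killing the $\Ext$ term in universal coefficients, and naturality of $\wedge^3A\to H_3(A;\ZZ)$ identifying the image of $\pi_*$ with $\wedge^3(P/Q)$); this is correct and makes that step self-contained. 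For the final classification clause you genuinely diverge from the paper: there, one fixes a trivializing cochain $f$, passes to the twisted quantum group $G_q^f$ with coproduct $\Dhat_q^f=f\Dhat_q(\cdot)f^{-1}$, observes that $\Omega=\sigma f^{-1}$ is an honest dual $2$-cocycle on $G_q^f$, and invokes the classification of unitary dual $2$-cocycles on twisted $q$-deformations, \cite{arXiv:1405.6574}*{Corollary~3.3}, to write $\Omega=(u\otimes u)\omega\Dhat_q^f(u)^{-1}$ with $\omega\in Z^2(P;\T)$. You instead use the uniqueness half of the universal amenable functor (the theorem preceding Corollary~\ref{cfib}, resting on \cite{arXiv:1405.6572}) to identify $F_\sigma$ monoidally with a torus-cochain fiber functor, then unwind the monoidality of the mediating natural unitary, which is automatically an element of $\U(G_q)$ and yields $\sigma=(u\otimes u)f\Dhat_q(u)^{-1}$. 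This works: fiber functors on the pointed category $(\Rep T)^c$ are indeed classified by $2$-cochains on $P$ trivializing the lift of $c$, and your unwinding of the tensor structures is the right computation. Your route buys independence from \cite{arXiv:1405.6574}*{Corollary~3.3} as a black box, using only the universality theorem already established in the paper (though the proof of the cited corollary runs through the same circle of ideas, so the two arguments are cousins at bottom).

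The one genuine gap is (iv)$\Rightarrow$(iii). Your claim that a trivializing cochain $f$ with $\partial f=c^{-1}$ on $P$ ``is precisely the output of the construction of Theorem~\ref{thm:pseudo-2-cocycle}'' is false as stated. That construction only produces cochains of a very particular shape: since $\sigma(a\otimes b)=\phi^{-1}_{p(a)}(b)$ with $\phi_g\in T$, the resulting $f(\lambda,\nu)$ must be a character of $P$ in $\nu$ for each fixed $\lambda$, and must depend on $\lambda$ only through its image in the chosen quotient $\Gamma$ of $P/Q$; correspondingly, the right-hand side of \eqref{eassoc} it produces is automatically of the form $c(g,h,k)=\mu(g,h)(k)$, multiplicative in the last variable. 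A generic trivializing cochain has neither property, and since (iii) demands that the given $c$ be realized \emph{exactly} (the left-hand side of \eqref{eassoc} determines the right-hand side), one must show that $c$ admits such a bimultiplicative-type trivialization coming from a pair $(\phi,\mu)$. This is nontrivial lattice-cohomology work, and it is exactly what the paper outsources to \cite{MR3340190}*{Theorem~3.1 and Corollary~A.4}; a ``short cocycle computation'' does not supply it. Everything else in your proposal is sound.
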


\bp The equivalence of (i) and (ii) is a simple consequence of the definitions and is well-known to be true for any compact quantum group and a unitary associator $c$ on its representation category.

Since $q\ne1$, the maximal closed quantum subgroup of $G_q$ of Kac type is $T$, see~\cite{MR2335776}*{Lemma~4.10} or~\cite{arXiv:1405.6574}*{Theorem~3.1}. Therefore by Corollary~\ref{cfib}, if (i) is satisfied then there exists a unitary fiber functor $(\Rep T)^c\to\Hilb_f$, so by the analogue of equivalence between (i) and (ii) for $T$ instead of $G_q$ we conclude that $c$ is a coboundary on $P$. Thus (i) implies (iv).

The equivalence of (iv) and (v) is proved in~\cite{MR3340190}*{Corollary~A.4}. In \cite{MR3340190}*{Theorem~3.1 and Corollary~A.4} it is also shown that (iv) implies (iii). Finally, (iii) obviously implies~(ii).

\smallskip

Assume now that conditions (i)--(v) are satisfied and choose a $\T$-valued $2$-cochain $f$ on $P$ such that $\partial f=c^{-1}$. Consider the quantum group $G^f_q$ obtained by twisting the product on $\Pol(G_q)$ by $f$, or in other words, by replacing the coproduct on $\U(G_q)$ by $\Dhat^f_q=f\Dhat_q(\cdot)f^{-1}$. Take any unitary $\sigma$ satisfying \eqref{eassoc}. Then, since $f$ also satisfies \eqref{eassoc}, the unitary $\Omega=\sigma f^{-1}$ is a dual $2$-cocycle on $G^f_q$, that is,
$$
(\Omega\otimes1)(\Dhat^f_q\otimes\iota)(\Omega)=(1\otimes\Omega)(\iota\otimes\Dhat^f_q)(\Omega).
$$
By~\cite{arXiv:1405.6574}*{Corollary~3.3}, any such cocycle is cohomologous to a cocycle on $P$, that is, there exist a unitary $u\in\U(G_q^f)=\U(G_q)$ and a cocycle $\omega\in Z^2(P;\T)$ such that $\Omega=(u\otimes u)\omega\Dhat^f_q(u)^{-1}$. Then $\sigma=(u\otimes u)\omega f\Dhat_q(u)^{-1}$ and $\partial(\omega f)=c^{-1}$.
\ep

The above theorem shows that if $\wedge^3(P/Q) \neq 0$, e.g., $G = \mathrm{SU}(2)^3$, any choice of $c$ which is nontrivial on $\wedge^3(P/Q)$ leads to the absence of unitary fiber functors on $(\Rep G_q)^c$ preserving the classical dimension. This gives a partial answer to the question of existence of fiber functors on $(\Rep G_q)^c$ raised in~\cite{MR3340190}.

It is still possible, however, that there are fiber functors of a different dimension. Of course, if such functors exist, they are not defined by pseudo-cocycles. Let us show that at least for some $q$ such functors do not exist.

\begin{corollary}\label{cor:kwnotqg}
Assume that a cocycle $c\in Z^3(P/Q;\T)$ does not vanish on $\wedge^3(P/Q)$. Then there exists an interval $(a,b)\subset\R$ containing $1$ such that for all $q\in(a,b)$, $q\ne1$, there are no unitary fiber functors $(\Rep G_q)^c\to\Hilb_f$.
\end{corollary}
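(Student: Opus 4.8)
The plan is to argue by contradiction, reducing the existence of an arbitrary unitary fiber functor $F\colon(\Rep G_q)^c\to\Hilb_f$ to the existence of a \emph{dimension-preserving} one, which is already excluded. Indeed, since $c$ does not vanish on $\wedge^3(P/Q)$, condition~(v) of Theorem~\ref{thm:3coc-2pseudo} fails, hence so does condition~(i): there is no unitary fiber functor $(\Rep G_q)^c\to\Hilb_f$ preserving the classical dimension, for any $q>0$, $q\ne1$. It therefore suffices to produce an interval $(a,b)\ni1$ such that, for $q\in(a,b)$ with $q\ne1$, every unitary fiber functor $F$ automatically satisfies $\dim F(U)=\dim U$ for all representations $U$ of $G_q$.

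The heart of the argument is a sandwich $\dim U\le\dim F(U)\le d_q(U)$, where $d_q(U)$ is the quantum dimension. For the upper bound, note that twisting the associator by the $\T$-valued cocycle $c$ changes neither the fusion rules nor the intrinsic categorical dimension, since a standard solution of the conjugate equations for $U$ in $\Rep G_q$ yields a standard solution $(R_U,c(g^{-1},g,g^{-1})\bar R_U)$ in $(\Rep G_q)^c$ of the same norm (as in the proof of the preceding theorem); thus the intrinsic dimension of $U$ in $(\Rep G_q)^c$ is $d_q(U)$. As a unitary tensor functor carries this solution to a solution for $F(U)$ in $\Hilb_f$ of the same categorical dimension, and the intrinsic dimension in $\Hilb_f$ is just the vector-space dimension, we get $\dim F(U)\le d_q(U)$. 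For the lower bound, the fusion ring of $(\Rep G_q)^c$ is that of the compact group $G$, hence amenable, so $\|\Gamma_U\|=\dim U$; combined with the general inequality $\|\Gamma_U\|\le\dim F(U)$ valid for any unitary tensor functor into $\Hilb_f$ (whose equality case is precisely the amenability recalled above, cf.~\cite{MR1644299}), and which here follows from a Schur test with weights $t\mapsto\dim F(U_t)$, this gives $\dim U\le\dim F(U)$.

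Finally I would pass from objects to a single interval. Fix the finitely many fundamental representations $U_1,\dots,U_r$, which generate the representation ring $R(G)$ as a ring. By the quantum Weyl dimension formula the map $q\mapsto d_q(U_i)$ is continuous with value $\dim U_i$ at $q=1$, so there is $\delta>0$ with $d_q(U_i)<\dim U_i+1$ for every $i$ whenever $|q-1|<\delta$. For such $q$ the sandwich forces the integer $\dim F(U_i)$ into the interval $[\dim U_i,\dim U_i+1)$, whence $\dim F(U_i)=\dim U_i$. Since both $\dim F$ and the classical dimension are ring homomorphisms $R(G)\to\R$ (multiplicativity of $\dim F$ coming from the tensor structure of $F$) agreeing on the generators $U_i$, they coincide on all of $R(G)$, so $F$ is dimension-preserving, contradicting the first paragraph. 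Setting $(a,b)=(1-\delta,1+\delta)\cap(0,\infty)$ completes the proof. The one delicate point is exactly this uniformity: the sandwich alone pins down $\dim F(U)$ only on a $U$-dependent neighbourhood of $1$, and it is the finite generation of $R(G)$ together with the multiplicativity of $\dim F$ that upgrade the conclusion to a single interval valid for all objects simultaneously.
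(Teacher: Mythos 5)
Your proof is correct, and its overall skeleton---trap the integer $\dim F(U)$ between the classical and quantum dimensions for $q$ near $1$, propagate dimension-preservation to all of $R(G)$ from finitely many representations, and then contradict Theorem~\ref{thm:3coc-2pseudo}---is the same as the paper's. The genuine difference is in the propagation step. The paper pins down $\dim F$ on a \emph{single} self-conjugate faithful representation $U$ (choosing $(a,b)$ so that $\dim_q U<\dim U+1$) and then invokes Wassermann's rigidity result (proof of Theorem~19 in \cite{MR1014926}): there is only one dimension function on the representation ring of $G$ taking the value $\dim U$ at $[U]$. You instead pin down $\dim F$ on all $r$ fundamental representations and use that $R(G)$ is a polynomial ring over $\ZZ$ on their classes (this is where simple connectedness of $G$ enters), so that two ring homomorphisms agreeing on generators coincide; this trades the appeal to Wassermann for an elementary argument, at the mild cost of controlling $r$ quantum dimensions instead of one, with self-conjugacy of $U$ replaced by multiplicativity of $\dim F$. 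You also make explicit the sandwich $\dim U\le\dim F(U)\le\dim_q U$ that the paper leaves implicit, and both halves of it are justified correctly: the upper bound because the unitary tensor functor carries the standard solution $(R_U,c(g^{-1},g,g^{-1})\bar R_U)$ of the conjugate equations in $(\Rep G_q)^c$ (same norms as in $\Rep G_q$, the twisting scalar being in $\T$) to a solution for $F(U)$ in $\Hilb_f$, and the lower bound from $\|\Gamma_U\|=\dim U$ (amenability of the fusion ring of the compact group $G$, cf.~\cite{MR1644299}) together with the Schur-test inequality $\|\Gamma_U\|\le\dim F(U)$, where one uses $\dim F(\bar U)=\dim F(U)$ to control $\Gamma_U^t=\Gamma_{\bar U}$. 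Both routes are sound; yours is more self-contained, while the paper's is shorter given the reference.
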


\bp Choose a self-conjugate faithful unitary representation $U$ of $G$. Denote by the same symbol $U$ the corresponding representation of $G_q$. Choose an interval $(a,b)\subset\R$ such that $\dim_q U<\dim U+1$ for $q\in(a,b)$. Then for any unitary fiber functor $F\colon(\Rep G_q)^c\to\Hilb_f$ we have $\dim F(U)=\dim U$. But there exists only one dimension function on the representation ring of $G$ with value $\dim U$ on the class of $U$, see the proof of \cite{MR1014926}*{Theorem~19}. Therefore~$F$ preserves the classical dimension, and by the above theorem we know that such a functor does not exist for $q\ne1$.
\ep

\bigskip

\section{Compact groups}
\label{sec:cpt-grp-twist}

In this section we consider genuine compact groups $G$, and study the problem of describing closed quantum subgroups of $G^{t,\alpha}$.

\subsection{Quantum subgroups}
\label{sec:quotients}

Let us start with a short general discussion of quotients of~$A^{t,\alpha}$. Let $(p, \alpha)$ be an invariant cocentral action of $\Gamma$ on a Hopf algebra $A$.
The basic construction is the following. Let $I$ be a $\Gamma$-stable Hopf ideal of $A$ such that $I \subset \Ker(p)$, $\pi\colon A\to A/I$ be the quotient map, $\bar{p}$ be the induced cocentral homomorphism of Hopf algebras $\bar{p}\colon A/I \rightarrow K\Gamma$, and $\bar{\alpha}$ be the induced action $\bar{\alpha}\colon \Gamma \rightarrow \Aut(A/I)$ of $\Gamma$ on the quotient. Then $(\bar{p},\bar{\alpha})$ is again an invariant cocentral action of $\Gamma$ on $A/I$. The map $\pi \otimes \iota\colon A\rtimes_\alpha\Gamma\to (A/I)\rtimes_{\bar\alpha}\Gamma$ is a homomorphism of Hopf algebras, so by restriction it defines a Hopf algebra homomorphism $A^{t,\alpha} \rightarrow (A/I)^{t,\bar{\alpha}}$. As $\pi(A_g)=(A/I)_g$, this map is surjective, so $(A/I)^{t,\bar{\alpha}}$ is a quotient of $A^{t,\alpha} $.

Let us characterize the quotients of $A^{t,\alpha}$ that arise in this manner. For any $g \in \Gamma$, the restriction of $\alpha_g\otimes \iota$ defines a $\Gamma$-graded coalgebra automorphism of $A^{t,\alpha}$. (As we observed in Section~\ref{sec:abelian-groups}, this is an algebra automorphism if $\Gamma$ is abelian, but not in general.) We say that a subspace $X \subset  A^{t,\alpha}$ is $\Gamma$-stable if $(\alpha_g\otimes \iota)(X)\subset X$ holds for any $g \in \Gamma$.

\begin{proposition}\label{prop:correspideals}
Let $(p, \alpha)$ be an invariant cocentral action of $\Gamma$ on $A$, and
let $f\colon A^{t,\alpha} \rightarrow B$ be a surjective Hopf algebra map such that $J=\Ker(f)$ is a $\Gamma$-stable Hopf ideal with $J \subset \Ker(\tilde{p})$. Then there exists a $\Gamma$-stable Hopf ideal $I \subset A$ satisfying $I \subset \Ker(p)$, such that $(A/I)^{t,\bar{\alpha}}$ is isomorphic to $B$ as a Hopf algebra.
\end{proposition}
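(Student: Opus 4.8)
The plan is to transport the kernel $J=\Ker(f)$ back to $A$ through the coalgebra isomorphism $j\colon A\to A^{t,\alpha}$ of Proposition~\ref{prop:construction}, and then to check that the resulting subspace $I=j^{-1}(J)$ is a genuine $\Gamma$-stable Hopf ideal of $A$ contained in $\Ker(p)$. Three of the required properties are immediate. Since $j$ is a coalgebra map, $I$ is a coideal; since $\tilde{p}j=p$ and $J\subset\Ker(\tilde{p})$, we get $I\subset\Ker(p)$; and since $(\alpha_g\otimes\iota)j=j\alpha_g$ (which follows from $j(a)=a\otimes g$ for $a\in A_g$ together with the invariance $\alpha_g(A_h)=A_h$), the $\Gamma$-stability of $J$ translates into $\alpha_g(I)\subset I$ for all $g$. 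The two genuinely non-trivial points are that $I$ is a two-sided \emph{ideal} of $A$ and that it is stable under the antipode, and here the difficulty is precisely that $j$ is \emph{not} an algebra map.

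The key observation that unlocks the argument is that the hypothesis $J\subset\Ker(\tilde{p})$ forces $J$, and hence $I$, to be homogeneous. Indeed, applying $\iota\otimes\tilde{p}$ to $\Delta(J)\subset J\otimes A^{t,\alpha}+A^{t,\alpha}\otimes J$ and using $\tilde{p}(J)=0$ gives $(\iota\otimes\tilde{p})\Delta(J)\subset J\otimes K\Gamma$, so $J$ is a subcomodule for the grading coaction; as $j(A_g)=(A^{t,\alpha})_g$, the space $I$ is graded, $I=\oplus_g I_g$ with $I_g=I\cap A_g$. Now, although $j$ does not intertwine the two products, it satisfies $j(a)j(b)=j(a\,\alpha_g(b))$ for homogeneous $a\in A_g$. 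Thus $J=j(I)$ being an ideal of $A^{t,\alpha}$ is equivalent to $a\,\alpha_g(x)\in I$ and $x\,\alpha_h(a)\in I$ whenever $a\in A_g$ and $x\in I_h$. Combining this with $\alpha$-stability and homogeneity yields the ordinary ideal property: for $a\in A_g$, $x\in I_h$ one writes $ax=a\,\alpha_g(\alpha_{g^{-1}}(x))$ with $\alpha_{g^{-1}}(x)\in I_h$, and $xa=x\,\alpha_h(\alpha_{h^{-1}}(a))$ with $\alpha_{h^{-1}}(a)\in A_g$, so both lie in $I$. For the antipode I would transport the Hopf algebra structure of $A^{t,\alpha}$ to $A$ via $j$ and record that the resulting antipode is $S'(a)=\alpha_{g^{-1}}(S(a))$ for $a\in A_g$; since $J$ is a Hopf ideal, $I$ is $S'$-stable, and applying $\alpha_g$ to $S'(x)=\alpha_{g^{-1}}(S(x))\in I$ gives $S(x)\in I$. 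Hence $I$ is a $\Gamma$-stable Hopf ideal of $A$ with $I\subset\Ker(p)$.

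Finally, to identify the quotient I would invoke the construction recalled at the beginning of the section: the $\Gamma$-stable Hopf ideal $I\subset\Ker(p)$ produces a surjective Hopf algebra map $A^{t,\alpha}\to(A/I)^{t,\bar{\alpha}}$ obtained by restricting $\pi\otimes\iota\colon A\rtimes_\alpha\Gamma\to(A/I)\rtimes_{\bar{\alpha}}\Gamma$. Its kernel is $(I\otimes K\Gamma)\cap A^{t,\alpha}$, which by homogeneity of $I$ equals $j(I)=J$. Therefore $B=A^{t,\alpha}/J$ and $(A/I)^{t,\bar{\alpha}}$ are quotients of $A^{t,\alpha}$ by the same Hopf ideal, whence they are isomorphic as Hopf algebras. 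I expect the main obstacle to be exactly the bookkeeping forced by the failure of $j$ to be an algebra map: the entire argument hinges on first deducing homogeneity of $I$ from $J\subset\Ker(\tilde{p})$, since this is what allows one to untwist the product and the antipode degree by degree and thereby recover an honest ideal of $A$.
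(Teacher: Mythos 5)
Your proposal is correct and takes essentially the same route as the paper's proof: you pull $J$ back through the coalgebra isomorphism $j$ to get $I=j^{-1}(J)$, deduce homogeneity of $J$ from $J\subset\Ker(\tilde p)$ via the coaction $x\mapsto x_{(1)}\otimes\tilde p(x_{(2)})$ (exactly the paper's key step), untwist the product and the antipode degree by degree using $\Gamma$-stability (your formula $S'(a)=\alpha_{g^{-1}}(S(a))$ agrees with the paper's $S(j(a_g))=j(S(\alpha_{g^{-1}}(a_g)))$ since each $\alpha_g$ commutes with $S$), and identify $J$ with the kernel of $\pi\otimes\iota$ restricted to $A^{t,\alpha}$. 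The only cosmetic difference is that you check both one-sided ideal conditions directly, where the paper verifies the right-ideal property and lets antipode stability supply the rest.
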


\begin{proof}
Using the $\Gamma$-graded coalgebra isomorphism $j$, we put $I = j^{-1}(J)$. Thus, an element $a = \sum_g a_g$ with $a_g \in A_g$ belongs to $I$ if and only if $\sum_g a_g \otimes g$ belongs to $J$. We claim that $I$ is a $\Gamma$-stable Hopf ideal of $A$ with $I \subset \Ker(p)$.

The first observation is that $I$ is a coideal, since $j$ is a coalgebra map and $J$ is a coideal. Next, by $\tilde{p} j = p$, $J \subset \Ker(\tilde{p})$ implies $I \subset \Ker(p)$. Since $j$ intertwines $\alpha_g$ with $\alpha_g\otimes\iota$, we also see that $I$ is $\Gamma$-stable.

In order to show that $I$ is also an ideal, let us note that since $J$ is a coideal contained in $\Ker(\tilde p)$, the map $J\to J\otimes K\Gamma $, $x\mapsto x_{(1)}\otimes\tilde p(x_{(2)})$, defines a coaction of $\Gamma$ on $J$. Hence $J$ decomposes into homogeneous components, so that if $a = \sum_g a_g\in A$, then the element $j(a)$ belongs to $J$ if and only if each $j(a_g) = a_g \otimes g$ belongs to $J$.
Combining the homogeneous decomposition $a = \sum_g a_g$ and the antipode formula $S(j(a_g)) = j(S(\alpha_{g^{-1}}(a_g)))$, we see that $I$ is stable under the antipode.

Now, suppose that $a = \sum_g a_g \in I$, and let $b=\sum_h b_h$, for $b_h \in A_h$, be another element. Then for any $g,h$, we have $a_g \otimes g \in J$, which implies that $(a_g  \otimes g)(\alpha_{g^{-1}}(b_h)\otimes h)= a_g b_h\otimes g h \in J$. Thus we obtain $a_g b_h \in I$, so that $I$ is a right ideal, hence a $\Gamma$-invariant Hopf ideal as claimed.

We can therefore form the Hopf algebra $(A/I)^{t,\bar{\alpha}}$ as above, and the canonical projection $\pi\colon A \rightarrow A/I$ induces a surjective Hopf algebra map $\pi \otimes \iota \colon A^{t,\alpha} \rightarrow (A/I)^{t,\bar{\alpha}}$. Looking at the homogeneous elements in the kernel, we see that $J$ is exactly the kernel of this projection.
\end{proof}

Let us now turn to the case of compact groups. So from now on we assume that $K=\C$ and $A = \Pol(G)$, the Hopf $*$-algebra of regular functions on a compact group $G$. Then the category $\Corep(A) = \Rep G$ is semisimple and has a commutative chain group, namely, the Pontryagin dual of the center $Z(G)$ of $G$. Therefore, since a $\Gamma$-grading is given by a homomorphism $\Ch(G)\to\Gamma$ and only depends on the (abelian) image of this homomorphism, the group $\Gamma$ can be assumed, without loss of generality, to be abelian.

We denote by $i\colon\hat\Gamma\to Z(G)$ the homomorphism dual to $\Ch(G)\to\Gamma$. A function $f \in \Pol(G)$ belongs to the homogeneous subspace $\Pol(G)_g$ for some $g \in \Gamma$ if and only if $f(i(\psi) x) = \psi(g) f(x)$ holds for any $\psi \in \hat{\Gamma}$ and any $x \in G$. Moreover, the decomposition of $f$ according to the grading $\Pol(G)=\oplus_{g \in \Gamma} \Pol(G)_g$ is given by
\begin{equation*}
\label{eq:homogen-decomp-of-func}
f = \sum_{g \in \Gamma} f_g, \ \text{ where} \ f_g(x) = \int_{\psi \in \hat{\Gamma}} \overline{\psi(g)}f(i(\psi)x) d\psi,
\end{equation*}
where the integration is with respect to the normalized Haar measure on $\hat{\Gamma}$. This decomposition holds as well for any $f \in C(G)$, at least if the convergence of $\sum_{g \in \Gamma} f_g$ is understood in $L^2(G)$.

Assume next we are given an invariant cocentral action of $\Gamma$ on $\Pol(G)$. It must be given by a continuous action $\alpha\colon\Gamma \curvearrowright G$ by group automorphisms leaving $i(\hat \Gamma)$ pointwise invariant. We denote by the same symbols $\alpha_g$ these automorphisms of $G$ and the corresponding automorphisms of~$C(G)$, so that $\alpha_g(f)(x)=f(\alpha^{-1}_g(x))$.

The coaction of $\Gamma$ on $\Pol(G) \rtimes_\alpha \Gamma$ defined in Proposition~\ref{prop:construction} can be regarded as an action of~$\hat{\Gamma}$. As $\Gamma$ is abelian, this is an action by $*$-algebra automorphisms. Explicitly, this action arises from the action of $\hat\Gamma$ on $C(G)$ by translations and from the dual action on $C(G)\rtimes_\alpha\Gamma$. We denote this combined action by $\tr\times\hat\alpha$, so
\begin{equation}\label{eg:hat-Gamma-action}
(\tr\times\hat\alpha)_\psi(f\lambda_g)=\overline{\psi(g)}f(i(\psi)\,\cdot)\lambda_g.
\end{equation}
Therefore Propositions~\ref{prop:construction} and~\ref{prop:reduced-algebra} imply that for this action of $\hat\Gamma$ on $C(G)\rtimes_\alpha\Gamma$ we have
$$
\Pol(G^{t,\alpha})=(\Pol(G)\rtimes_\alpha\Gamma)^{\hat\Gamma}\ \ \text{and}\ \ C(G^{t,\alpha})=(C(G)\rtimes_\alpha\Gamma)^{\hat\Gamma}.
$$
Note that as any compact group is coamenable, by Proposition~\ref{prop:enveloping} the quantum group $G^{t,\alpha}$ is coamenable as well, so the notation $C(G^{t,\alpha})$ is unambiguous.

Finally, the automorphisms $\alpha_g$ of $C(G)$ extend to $C(G)\rtimes_\alpha\Gamma$ by $\alpha_g(f\lambda_h)=\alpha_g(f)\lambda_h$. On~$\Pol(G^{t,\alpha})$, this is what we denoted by $\alpha_g\otimes\iota$ above.

With this setup, Proposition~\ref{prop:correspideals} (which is easily seen to remain true for Hopf $*$-algebras) translates into the following.

\begin{proposition}\label{prop:correspideals2}
Let $G$ be a compact group, $\Gamma$ a discrete abelian group, $i\colon\hat\Gamma\to Z(G)$ a continuous homomorphism, and $\alpha$ an action of $\Gamma$ on $G$ by group automorphisms that leave $i(\hat\Gamma)$ pointwise invariant, so that we can define the twisted compact quantum group~$G^{t,\alpha}$. Assume~$J$ is a Hopf $*$-ideal in $\Pol(G^{t,\alpha})$ such that $\alpha_g(J)=J$ for all $g\in\Gamma$ and $J$ is contained in the kernel of the homomorphism $\tilde p\colon \Pol(G)\rtimes_\alpha\Gamma\to\Pol(\hat\Gamma)$, $f\lambda_g\mapsto f (e)g$. Then~$J$ is defined by a closed $\alpha$-invariant subgroup $H$ of $G$ containing $i(\hat\Gamma)$, so that $\Pol(G^{t,\alpha})/J\simeq\Pol(H^{t,\alpha})$.
\end{proposition}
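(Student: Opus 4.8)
The plan is to deduce everything from the abstract correspondence of Proposition~\ref{prop:correspideals}, which (as noted just before the statement) remains valid for Hopf $*$-algebras, and then to translate the resulting Hopf $*$-ideal of $\Pol(G)$ into a closed subgroup of $G$ via Tannaka--Krein duality for compact groups. First I would apply the $*$-version of Proposition~\ref{prop:correspideals} to the quotient map $f\colon\Pol(G^{t,\alpha})\to\Pol(G^{t,\alpha})/J$ with kernel $J$. Since $J$ is by hypothesis a $\Gamma$-stable Hopf $*$-ideal contained in $\Ker(\tilde p)$, that proposition produces a $\Gamma$-stable Hopf $*$-ideal $I\subset\Pol(G)$ with $I\subset\Ker(p)$ together with a $*$-isomorphism $\Pol(G^{t,\alpha})/J\simeq(\Pol(G)/I)^{t,\bar\alpha}$, where $(\bar p,\bar\alpha)$ is the induced invariant cocentral action on the quotient.

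Second, I would interpret $I$ group-theoretically. The quotient $\Pol(G)/I$ is a commutative Hopf $*$-algebra each of whose elements is a matrix coefficient of a unitary corepresentation, so it is $\Pol(H)$ for a compact group $H$, and the surjection $\Pol(G)\to\Pol(H)$ is the restriction map attached to a realization of $H$ as the closed subgroup $H=\{x\in G\mid f(x)=0\text{ for all }f\in I\}$, with $I$ its vanishing ideal. It then remains to match the two side conditions with properties of $H$. For the grading: since $f\in\Pol(G)_g$ holds precisely when $f(i(\psi)x)=\psi(g)f(x)$, the map $p$ is, under the identification $\C\Gamma=\Pol(\hat\Gamma)$, nothing but restriction of functions along $i$, i.e.\ $p(f)(\psi)=f(i(\psi))$; hence $\Ker(p)$ is exactly the vanishing ideal of $i(\hat\Gamma)$, and $I\subset\Ker(p)$ forces $i(\hat\Gamma)\subset H$. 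For the action: because $\alpha_g$ on $\Pol(G)$ is the automorphism dual to the group automorphism $\alpha_g$ of $G$, the $\Gamma$-stability of $I$ is equivalent to $\alpha$-invariance of its zero set $H$.

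Finally I would assemble these observations. As $i(\hat\Gamma)\subset Z(G)$ lies in $H$, it is central in $H$, and $\alpha$ restricts to an action of $\Gamma$ on $H$ fixing $i(\hat\Gamma)$ pointwise; thus the data $(\bar p,\bar\alpha)$ on $\Pol(H)$ is precisely the invariant cocentral action defining the twisted compact quantum group $H^{t,\alpha}$. Consequently $(\Pol(G)/I)^{t,\bar\alpha}=\Pol(H)^{t,\bar\alpha}=\Pol(H^{t,\alpha})$, and combined with the isomorphism from the first step this yields $\Pol(G^{t,\alpha})/J\simeq\Pol(H^{t,\alpha})$, as required.

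The one genuinely nonformal point, and the step I expect to demand the most care, is the Tannaka--Krein identification of the abstract Hopf $*$-ideal $I$ with the vanishing ideal of an honest closed subgroup $H$ of $G$ (equivalently, checking that the quotient $\Pol(G)/I$ is the regular function algebra of a genuine compact group realized inside $G$). Once $H$ is in hand, the translation of the two conditions and the reassembly reduce to routine verifications on homogeneous components, exactly as in the proof of Proposition~\ref{prop:correspideals}.
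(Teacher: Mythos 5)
Your proposal is correct and follows essentially the paper's own route: the paper's proof consists precisely of the remark that Proposition~\ref{prop:correspideals} ``remains true for Hopf $*$-algebras'' and ``translates into'' the statement, which is your two-step argument of applying the $*$-version of that proposition and then identifying the resulting Hopf $*$-ideal $I\subset\Pol(G)$ with the vanishing ideal of a closed subgroup $H$ via the classical correspondence. The dictionary you make explicit --- $\Ker(p)$ being the vanishing ideal of $i(\hat\Gamma)$ so that $I\subset\Ker(p)$ gives $i(\hat\Gamma)\subset H$, and $\Gamma$-stability of $I$ corresponding to $\alpha$-invariance of $H$ --- is exactly what the paper leaves implicit.
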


In order to formulate our main result we need to introduce more notation. Under the assumptions of the above proposition, define an action of $\Gamma \times \hat{\Gamma}$ on $G$ by homeomorphisms by
\begin{equation} \label{eq:double-action}
(g, \psi). x= \alpha_g(i(\psi)x)=i(\psi)\alpha_g(x).
\end{equation}
Consider the following subsets of $G$:
\begin{align*}
G_\reg &=\{x\in G\mid\text{the stabilizer of}\ x\ \text{in}\ \Gamma\times\hat\Gamma\ \text{is trivial}\},\\
G_1 &=\{ x \in G \mid \alpha_g(x) \in i(\hat{\Gamma})x, \ \forall g \in \Gamma\},\\
G^\Gamma &=\{x \in G \mid \alpha_g(x) = x, \ \forall g \in \Gamma \} \subset G_1.
\end{align*}

Our goal is to prove the following theorem.

\begin{theorem}\label{thm:subgroups}
Let $G$, $\Gamma$, $i$, $\alpha$ be as in Proposition~\ref{prop:correspideals2}, and assume that $\Gamma$ is finite and $i\colon\hat\Gamma\to Z(G)$ is injective. Assume also that one of the following conditions holds:
\begin{enumerate}
\item $\wedge^2\Gamma=0$ and $G = G_\reg \cup G_1$; or
\item $G = G_\reg \cup G^\Gamma$.
\end{enumerate}
Then there is a one-to-one correspondence between
\begin{enumerate}
\item[--] Hopf $*$-ideals $J\subset \Pol(G^{t,\alpha})$ with $\Pol(G^{t,\alpha})/J$ noncommutative, and
\item[--] closed $\alpha$-invariant subgroups of $G$ containing $i(\hat\Gamma)$ and an element of $G_{\rm reg}$.
\end{enumerate}
\end{theorem}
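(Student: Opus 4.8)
The plan is to reduce everything to Proposition~\ref{prop:correspideals2}. That proposition already sets up a bijection between Hopf $*$-ideals $J\subset\Pol(G^{t,\alpha})$ that are $\alpha$-invariant and contained in $\Ker\tilde p$, on the one hand, and closed $\alpha$-invariant subgroups $H\supset i(\hat\Gamma)$, on the other, via $\Pol(G^{t,\alpha})/J\simeq\Pol(H^{t,\alpha})$. So two things remain to be shown: (a) among these, the quotient is noncommutative exactly when $H$ meets $G_\reg$; and (b) \emph{every} Hopf $*$-ideal with noncommutative quotient is automatically $\alpha$-invariant and contained in $\Ker\tilde p$, so that no ideals are missed. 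Both rest on a classification of the irreducible $*$-representations of $\Pol(G^{t,\alpha})$, which I would establish first.

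\textbf{Classification of representations.} Here I would use $C(G^{t,\alpha})=(C(G)\rtimes_\alpha\Gamma)^{\hat\Gamma}$ together with the two commuting actions on $G$ — $\Gamma$ acting by $\alpha$ and $\hat\Gamma$ acting by translation through $i$ — that is, the action of $\Gamma\times\hat\Gamma$ from~\eqref{eq:double-action}. By the crossed-product/Clifford-theory analysis used in~\cite{MR3194750}, the irreducible $*$-representations are organized by the orbits of $\Gamma\times\hat\Gamma$. A free orbit, i.e.\ a point $x_0\in G_\reg$, has trivial $\Gamma$-isotropy (here $i$ injective is used), so the induced representation of $C(G)\rtimes_\alpha\Gamma$ has dimension $|\Gamma|$; since $\hat\Gamma$ then permutes the $|\hat\Gamma|$ central translates freely, these all restrict to a single irreducible representation $\bar\rho_{x_0}$ of $C(G^{t,\alpha})$, again of dimension $|\Gamma|$. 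A point of $G^\Gamma$ instead yields one-dimensional representations indexed by the characters of $\Gamma$. For $x\in G_1$ the relevant datum is the homomorphism $\rho_x\colon\Gamma\to\hat\Gamma$ with $\alpha_g(x)=i(\rho_x(g))x$, and building a character amounts to solving an equation of the form $c_{gh}=\rho_x(g)(h)^{-1}c_gc_h$; this is solvable precisely when the bicharacter $(g,h)\mapsto\rho_x(g)(h)$ is symmetric, which is guaranteed by $\wedge^2\Gamma=0$, since then every bicharacter on $\Gamma$ is symmetric. Thus under hypothesis (i) every point of $G_1$ contributes one-dimensional representations, and under (ii) every point of $G^\Gamma$ does; in both cases the covering $G=G_\reg\cup G_1$ (resp.\ $G_\reg\cup G^\Gamma$) forces the clean dichotomy that every irreducible $*$-representation is either one-dimensional (from a degenerate point) or $|\Gamma|$-dimensional (from a regular point).

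\textbf{The two directions.} The reverse direction is then quick: if $H$ is closed, $\alpha$-invariant, contains $i(\hat\Gamma)$ and a point $x_0\in G_\reg$, then $x_0$ is still regular for the restricted $\Gamma\times\hat\Gamma$-action on $H$, so $\Pol(H^{t,\alpha})$ carries a $|\Gamma|$-dimensional irreducible representation and is noncommutative; injectivity of $H\mapsto J_H$ is inherited from Proposition~\ref{prop:correspideals2}. For the forward direction, suppose $B=\Pol(G^{t,\alpha})/J$ is noncommutative. Then $B$ has an irreducible $*$-representation of dimension $>1$, which by the dichotomy must be some $\bar\rho_{x_0}$ with $x_0\in G_\reg$; factoring this representation through $B$ is what will eventually produce, via Proposition~\ref{prop:correspideals2}, a subgroup $H$ with $x_0\in H\cap G_\reg$.

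\textbf{Main obstacle.} The hard part, which is the real content beyond Proposition~\ref{prop:correspideals2}, is the implication that a noncommutative quotient forces $J\subset\Ker\tilde p$ and $\alpha_g(J)=J$ for all $g$. For the first, I would note that $\Ker\tilde p=\bigcap_\psi\Ker(\chi_\psi\circ\tilde p)$, and that the characters $\chi_\psi\circ\tilde p$ are exactly the representations based at $e\in G^\Gamma$; the key point is that the $|\Gamma|$-dimensional representation $\bar\rho_{x_0}$ collapses all $|\hat\Gamma|$ central translates of $x_0$ into one irreducible, so it records the action of the central subgroup $i(\hat\Gamma)$ through every character $\psi$, and this should force each $\chi_\psi\circ\tilde p$ — hence $\tilde p$ itself — to descend to $B$. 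The $\alpha$-invariance of $J$ should then follow because $\tilde\alpha_g=\alpha_g\otimes\iota$ is a Hopf $*$-algebra automorphism (using that $\Gamma$ is abelian) that fixes the equivalence class of each regular-orbit representation, so it preserves the family of representations factoring through $B$ once the grading $\tilde p$ is known to survive. Converting the existence of this single high-dimensional representation into the global statement that both the grading and the $\Gamma$-action descend to $B$ is, I expect, the delicate step where the hypotheses (i)/(ii), by ruling out any intermediate representations, are indispensable. Once these two conditions are in place, Proposition~\ref{prop:correspideals2} supplies $H$ and the bijection is complete.
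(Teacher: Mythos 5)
Your overall architecture coincides with the paper's: the same Mackey-type classification of the irreducible representations of $C(G^{t,\alpha})$ (one $|\Gamma|$-dimensional class per $(\Gamma\times\hat\Gamma)$-orbit in $G_\reg$, and $|\Gamma|$ one-dimensional ones per orbit in $G_1$, your bicharacter-symmetry analysis under $\wedge^2\Gamma=0$ matching the paper's computation of the twisted group algebra $(C(\hat\Gamma)\rtimes\Gamma)^{\hat\Gamma}$), followed by reduction to Proposition~\ref{prop:correspideals2}. The classification and the reverse direction are correct in outline.

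But at the step you yourself flag as the main obstacle --- showing that a noncommutative quotient forces $J\subset\Ker(\tilde p)$ --- what you offer (``$\bar\rho_{x_0}$ records the action of the central subgroup through every character $\psi$, and this should force $\tilde p$ to descend'') is not an argument, and as a purely ideal-theoretic claim it is false: $\Ker(\bar\rho_{x_0})$ is a single primitive ideal of the C$^*$-algebra and is \emph{not} contained in the kernels of the characters based at points of $i(\hat\Gamma)$, so no property of that one representation can give $J\subset\Ker(\tilde p)$ without using the coalgebra structure of $J$. The paper's mechanism, absent from your sketch, is the tensor-square trick: since $J$ is a Hopf $*$-ideal, $J\subset\Ker(\pi)$ implies $J\subset\Ker(\pi\otimes\pi^\vee)$, where $\pi^\vee=\pi(S(\cdot))^t$; the counit of $C(G)\rtimes_\alpha\Gamma$ equals $\rho_e$ and is always a subrepresentation of $\pi\otimes\pi^\vee$; and because the class of $\rho_x|_{C(G^{t,\alpha})}$ for $x\in G_\reg$ is fixed by $\hat\alpha$, each $\rho_{i(\psi)}=\rho_e\hat\alpha_\psi$ is a subrepresentation of $\rho_x\otimes\rho_x^\vee$ (Lemma~\ref{lem:eqv-rho-under-inverse}), while $\tilde p$ restricted to $\Pol(G^{t,\alpha})$ is exactly $\oplus_\psi\rho_{i(\psi)}$. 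You also implicitly use, twice, that a Hopf $*$-ideal is the intersection of the kernels of the irreducible representations annihilating it --- once to extract a representation of dimension $>1$ from noncommutativity of $B$, and once to convert kernel containments into $\alpha_g(J)=J$; this requires proof (the quotient is again a CQG algebra, hence admits a faithful $*$-representation: Lemma~\ref{lem:hopf-ideal-as-intersection-of-prim-ideals}). Finally, your $\alpha$-invariance argument only addresses the regular-orbit representation, whereas $J$ also lies in kernels of one-dimensional representations, whose classes are moved by $\Gamma$; there you need the containment $\Gamma[\pi]\subset\hat\Gamma[\pi]$ from Proposition~\ref{prop:rep-classification} combined with the just-established $\hat\Gamma$-invariance of $J$, again filtered through the intersection-of-kernels lemma.
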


\begin{remark}
If $\Gamma$ is a cyclic group of prime order, then $G = G_\reg \cup G_1$, so condition (i) is satisfied.
\end{remark}

\subsection{Classification of irreducible representations}

The proof of Theorem~\ref{thm:subgroups} is based on a
classification of the irreducible representations of $C(G^{t,\alpha})$.  Let us record some C$^*$-algebraic facts that we will use.

\smallskip

When $A$ is a C$^*$-algebra, we denote the set of equivalence classes of its irreducible representations by $\hat{A}$.
We need a very particular case of the Mackey-type analysis of crossed product C$^*$-algebras $A=C(X)\rtimes\Gamma$~\cite{MR2288954}. Namely, let $X$ be a compact topological space, and $\Gamma$ be a finite abelian group acting on~$X$. Then the irreducible representations of $C(X) \rtimes \Gamma$ are parametrized by the pairs $([x], \psi)$, where $[x]$ is a class in $X/\Gamma$ and $\psi$ is a character on the stabilizer group $\St_x$ of $x$. The irreducible representation corresponding to such a pair is given as the induction of the representation $\ev_x \otimes \psi$ on $C(\Gamma x) \otimes C^*(\St_x)$ to $C(\Gamma x) \rtimes \Gamma$ composed with the restriction homomorphism $C(X) \rtimes \Gamma \to C(\Gamma x) \rtimes \Gamma$. In fact, all we need to know is that any irreducible representation of $C(X) \rtimes \Gamma$ factors through $C(\Gamma x) \rtimes \Gamma$ for some $x\in X$, which is just the first easy step in the Mackey-type analysis.

\smallskip

Recall also that if $A\subset B$ is an inclusion of C$^*$-algebras, then any irreducible representation of $A$ appears as a subrepresentation of some irreducible representation of $B$.

\smallskip

Turning to $C(G^{t,\alpha})$, from the above discussion we conclude that any irreducible representation of $C(G^{t,\alpha})$ factors through $C(G^{t,\alpha})\to C(\Gamma x)\rtimes\Gamma$, where $x \in G$ and $\Gamma x=\{\alpha_g(x)\}_{g\in\Gamma}$. Instead of the $\Gamma$-orbit of $x$ it is convenient to consider the larger $(\Gamma\times \hat\Gamma)$-orbit. The point is that the image of $C(G^{t,\alpha})$ in $C((\Gamma\times \hat\Gamma).x)\rtimes\Gamma$ coincides with the fixed point algebra $(C((\Gamma\times \hat\Gamma).x)\rtimes\Gamma)^{\hat\Gamma}$ with respect to the action of $\hat\Gamma$ defined by~\eqref{eg:hat-Gamma-action}. It follows that every irreducible representation~$\pi$ of~ $C(G^{t,\alpha})$ factors through $(C(O_\pi)\rtimes\Gamma)^{\hat\Gamma}$ for a uniquely defined $(\Gamma\times\hat\Gamma)$-orbit $O_\pi$ in $G$. Under the assumption of Theorem~\ref{thm:subgroups}, we just need to consider two types of orbits.

\smallskip

First consider a $(\Gamma\times\hat\Gamma)$-orbit $O$ in $G_\reg$. Then $O$ can be $(\Gamma\times\hat\Gamma)$-equivariantly identified with $\Gamma\times\hat\Gamma$. Therefore to understand the corresponding representations it suffices to observe the following.

\begin{lemma}
Consider the action of $\Gamma$ on $C(\Gamma\times\hat\Gamma)$ by translations and the action of $\hat \Gamma$ on $C(\Gamma\times\hat\Gamma)\rtimes\Gamma$ defined similarly to~\eqref{eg:hat-Gamma-action} as the combination of the action by translations on $C(\Gamma\times\hat\Gamma)$ and the dual action on $C^*(\Gamma)$.
Then $(C(\Gamma\times\hat\Gamma)\rtimes\Gamma)^{\hat\Gamma}\simeq \Mat_{|\Gamma|}(\C)$.
\end{lemma}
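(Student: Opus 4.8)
The plan is to compute the fixed-point algebra in three steps, reducing everything to the elementary fact that the crossed product of $C(\Gamma)$ by the translation action of $\Gamma$ is a matrix algebra.

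First I would observe that, since $\Gamma$ acts on $\Gamma\times\hat\Gamma$ only through translation on the first factor and trivially on the second, the crossed product factors as
\[
C(\Gamma\times\hat\Gamma)\rtimes\Gamma\simeq(C(\Gamma)\rtimes\Gamma)\otimes C(\hat\Gamma).
\]
The action of $\Gamma$ on itself by translation is free and transitive, so the standard isomorphism $C(\Gamma)\rtimes\Gamma\simeq\B(\ell^2(\Gamma))=\Mat_{|\Gamma|}(\C)$ holds: it sends $f\in C(\Gamma)$ to the corresponding multiplication operator and each $\lambda_g$ to the translation operator on $\ell^2(\Gamma)$, and these generate all of $\B(\ell^2(\Gamma))$.

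Next I would track the $\hat\Gamma$-action under this identification. By its definition (as in~\eqref{eg:hat-Gamma-action}) it is the combination of translation on the $\hat\Gamma$-factor of $C(\Gamma\times\hat\Gamma)$, which becomes the translation action $\tau$ on $C(\hat\Gamma)$, and the dual action on $C^*(\Gamma)$, which fixes the multiplication operators and sends $\lambda_g\mapsto\overline{\psi(g)}\lambda_g$. A direct computation on $\ell^2(\Gamma)$ shows that this dual action is inner: it equals $\Ad(U_\psi)$, where $U_\psi\in\B(\ell^2(\Gamma))$ is the multiplication operator by the character $\psi$, viewed as an element of $C(\Gamma)$. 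Hence, writing the algebra as $C(\hat\Gamma,\B(\ell^2(\Gamma)))$, the $\hat\Gamma$-action is $\psi\mapsto\Ad(U_\psi)\otimes\tau_\psi$.

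The key step is then to untwist this action. Since $\psi\mapsto U_\psi$ is a group homomorphism, the function $V(\chi)=U_\chi$ is a unitary element of $C(\hat\Gamma,\B(\ell^2(\Gamma)))$ (here $\hat\Gamma$ is finite, so there is no continuity issue), and conjugation by $V$ transforms $\Ad(U_\psi)\otimes\tau_\psi$ into the pure translation $1\otimes\tau_\psi$. Therefore the fixed-point algebra is isomorphic to $\B(\ell^2(\Gamma))\otimes C(\hat\Gamma)^{\hat\Gamma}$. As translation of $\hat\Gamma$ on itself is transitive, the only invariant functions are the constants, so $C(\hat\Gamma)^{\hat\Gamma}=\C$ and we obtain $(C(\Gamma\times\hat\Gamma)\rtimes\Gamma)^{\hat\Gamma}\simeq\B(\ell^2(\Gamma))=\Mat_{|\Gamma|}(\C)$. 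The only point requiring care is the bookkeeping in the third paragraph, namely verifying that the dual action is implemented by the character multiplication operators $U_\psi$; once this is in place, the untwisting by $V$ makes the computation of the fixed points immediate.
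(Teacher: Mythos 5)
Your proof is correct, and it takes a genuinely different, more computational route than the paper's one-line argument. The paper decomposes $C(\Gamma\times\hat\Gamma)\rtimes\Gamma$ as the direct sum of the blocks $C(\Gamma\times\{\psi\})\rtimes\Gamma\simeq\Mat_{|\Gamma|}(\C)$ over $\psi\in\hat\Gamma$ and simply observes that the $\hat\Gamma$-action permutes these blocks (simply transitively, via the translation part); since an action permuting matrix blocks simply transitively has fixed-point algebra isomorphic to a single block, the result follows without ever computing the isomorphisms connecting the blocks. You instead keep $C(\hat\Gamma)$ as a tensor factor, writing the algebra as $C(\hat\Gamma,\B(\ell^2(\Gamma)))$, identify the dual action on $C^*(\Gamma)$ as inner, implemented by character multiplication operators, and then trivialize the whole $\hat\Gamma$-action by conjugating with the unitary $V(\chi)=U_\chi$ --- the two pictures are of course related by evaluating your tensor factorization at the points of $\hat\Gamma$. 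One harmless convention check in your third step: with the convention of \eqref{eg:hat-Gamma-action} the dual action is $\lambda_g\mapsto\overline{\psi(g)}\lambda_g$, whereas $\Ad(U_\psi)\lambda_g=\psi(g)\lambda_g$, so the implementing unitary is really multiplication by $\bar\psi$, i.e., $U_\psi^*$; since $\psi\mapsto U_\psi^*$ is still a group homomorphism, your untwisting computation goes through verbatim. What each approach buys: the paper's argument is shorter and avoids any computation of the action beyond the block permutation being transitive, while yours exhibits an explicit trivializing unitary, showing the $\hat\Gamma$-action is unitarily conjugate to the product-type action $1\otimes\tau$ --- an explicit isomorphism that could be useful downstream, for instance in identifying concretely the $|\Gamma|$-dimensional irreducible representations $\rho_x|_{C(G^{t,\alpha})}$ attached to regular orbits.
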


\bp Indeed, the algebra $C(\Gamma\times\hat\Gamma)\rtimes\Gamma$ is the direct sum of the blocks $C(\Gamma\times\{\psi\})\rtimes\Gamma\simeq  \Mat_{|\Gamma|}(\C)$ over $\psi\in\hat\Gamma$, while the action of $\hat\Gamma$ permutes these blocks.
\ep

Therefore every $(\Gamma\times\hat\Gamma)$-orbit $O$ in $G_\reg$ determines a unique up to equivalence irreducible representation of $C(G^{t,\alpha})$. Explicitly, for every $x\in G$ we have a representation of $\rho_x$ of $C(G) \rtimes \Gamma$ on~$\ell^2(\Gamma x)$ defined by
$$
\rho_x(f \lambda_g) \delta_{\alpha_h(x)} = f(\alpha_{g h}(x)) \delta_{\alpha_{g h}(x)}.
$$
If $x\in O$, then $\rho_x|_{C(G^{t,\alpha})}$ is the required representation corresponding to $O$, since it factors through
$(C(O)\rtimes\Gamma)^{\hat\Gamma}$ and has dimension $|\Gamma|$.

Let us also note that since the kernel of the map $C(G^{t,\alpha})\to (C(O)\rtimes\Gamma)^{\hat\Gamma}$ is $\alpha$- and $\hat\alpha$-invariant, the class of the representation corresponding to $O$ is $\alpha$- and $\hat\alpha$-invariant.

\smallskip

Next, consider an orbit $O$ in $G_1$. Fix a point $x\in O$. Then we have $\alpha_g(x)=i(B^x_g)x$ for a uniquely defined $B^x_g\in\hat\Gamma$. We thus get a homomorphism $B^x\colon\Gamma\to\hat\Gamma$. The bijective map $O=i(\hat\Gamma)x\to\hat\Gamma$, $i(\psi)x\mapsto \psi$, intertwines the action of $\Gamma\times\hat\Gamma$ on $i(\hat\Gamma)x$ with the action $(g,\psi)\eta=B^x_g\psi\eta$. Therefore it suffices to understand the algebra $(C(\hat\Gamma)\rtimes\Gamma)^{\hat\Gamma}$.

\begin{lemma}
Assume we are given a homomorphism $B\colon\Gamma\to\hat\Gamma$. Define an action $\beta$ of $\Gamma$ on~$C(\hat \Gamma)$ by $\beta_g(f)(\psi)=f(B_g^{-1}\psi)$, and consider the action $\tr\times\hat\beta$ of $\hat\Gamma$ on $C(\hat\Gamma)\rtimes_\beta\Gamma$ defined similarly to~\eqref{eg:hat-Gamma-action}. If either $\wedge^2\Gamma=0$ or $B$ is trivial, then the algebra $(C(\hat\Gamma)\rtimes_\beta\Gamma)^{\hat\Gamma}$ is abelian and the dual action $\hat\beta$ defines a free transitive action of $\hat\Gamma$ on its spectrum.
\end{lemma}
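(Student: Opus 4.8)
The plan is to compute the invariant algebra $(C(\hat\Gamma)\rtimes_\beta\Gamma)^{\hat\Gamma}$ explicitly and recognize it as a twisted group algebra of $\Gamma$. Working in the basis $\{e_\psi\}_{\psi\in\hat\Gamma}$ of delta functions on $\hat\Gamma$, a general element of the crossed product is $\sum_{\psi,g}c_{\psi,g}\,e_\psi\lambda_g$, and since $\beta_g(e_\psi)=e_{B_g\psi}$ the action (defined as in \eqref{eg:hat-Gamma-action}) reads $(\tr\times\hat\beta)_\chi(e_\psi\lambda_g)=\overline{\chi(g)}\,e_{\chi^{-1}\psi}\lambda_g$. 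Imposing invariance gives the relations $c_{\chi\psi,g}=\chi(g)c_{\psi,g}$ for all $\chi,\psi,g$; solving them (set $\psi=e$) shows that the invariant elements are exactly the linear combinations of the $|\Gamma|$ elements
$$
v_g:=\sum_{\psi\in\hat\Gamma}\psi(g)\,e_\psi\lambda_g=w_g\lambda_g,\qquad g\in\Gamma,
$$
where $w_g\in C(\hat\Gamma)$ is the character $\eta\mapsto\eta(g)$ of $\hat\Gamma$. In particular the invariant algebra is $|\Gamma|$-dimensional with basis $\{v_g\}_g$.

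Next I would compute the product. Using $\beta_g(w_h)=\overline{B_g(h)}\,w_h$ and $w_gw_h=w_{gh}$ one finds
$$
v_gv_h=\overline{B_g(h)}\,v_{gh},
$$
so that $(C(\hat\Gamma)\rtimes_\beta\Gamma)^{\hat\Gamma}$ is the twisted group algebra of $\Gamma$ for the $2$-cocycle $\omega(g,h)=\overline{B_g(h)}$. Since $v_{gh}=v_{hg}$, commutativity is equivalent to $\omega$ being symmetric, i.e.~to $B_g(h)=B_h(g)$ for all $g,h$. This holds under either hypothesis: if $B$ is trivial then $\omega\equiv1$; and in general the map $(g,h)\mapsto B_g(h)\overline{B_h(g)}$ is an alternating bicharacter $\Gamma\times\Gamma\to\T$, hence factors through a homomorphism $\wedge^2\Gamma\to\T$, which is forced to be trivial when $\wedge^2\Gamma=0$. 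In both cases the algebra is abelian.

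Finally, granting commutativity, the invariant algebra is a commutative finite-dimensional C$^*$-algebra of dimension $|\Gamma|$, so its spectrum $X$ is a set of $|\Gamma|=|\hat\Gamma|$ points, and the residual dual action $\hat\beta_\chi(v_g)=\overline{\chi(g)}v_g$ induces an action of $\hat\Gamma$ on $X$. Each $v_g$ is unitary (a product of the unitaries $w_g$ and $\lambda_g$), so $\tau(v_g)\in\T$ is nonzero for every character $\tau\in X$; hence $\chi\cdot\tau=\tau$ forces $\chi(g)=1$ for all $g$, i.e.~$\chi=e$, and the action is free. A free action of the finite group $\hat\Gamma$ on a set of cardinality $|\hat\Gamma|$ has a single orbit, so it is transitive as well.

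The genuinely substantive point is the second step: identifying the twisted multiplication and observing that the obstruction to commutativity is precisely the alternating part of the bicharacter $(g,h)\mapsto B_g(h)$, which is annihilated exactly by the condition $\wedge^2\Gamma=0$. I expect the bookkeeping of the invariance condition (the passage from the coefficients $c_{\psi,g}$ to the basis $\{v_g\}$) to be the most error-prone part, whereas once commutativity is in hand the freeness and transitivity of the dual action are a routine Pontryagin-duality argument.
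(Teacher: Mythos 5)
Your proof is correct, and its core is the same computation as in the paper: both identify the fixed-point algebra as the span of the unitaries $v_g=w_g\lambda_g$ (the paper writes $u_g=g\lambda_g$, viewing $g$ as a character on $\hat\Gamma$) with multiplication $v_gv_h=\overline{B_g(h)}\,v_{gh}$, i.e., as a twisted group algebra of $\Gamma$ with cocycle the bicharacter $(g,h)\mapsto\overline{B_g(h)}$. Where you genuinely diverge is the endgame. The paper argues that under either hypothesis this $2$-cocycle is a \emph{coboundary} (for $\wedge^2\Gamma=0$ this rests on the standard fact that for finite abelian $\Gamma$ the antisymmetrization induces an isomorphism $H^2(\Gamma;\T)\simeq\Hom(\wedge^2\Gamma,\T)$), yielding an explicit isomorphism $q$ onto $C^*(\Gamma)$ that sends $u_g$ to a scalar multiple of $\lambda_g$; commutativity then follows from $C^*(\Gamma)\simeq C(\hat\Gamma)$, and free transitivity of $\hat\beta$ on the spectrum follows because $q$ intertwines $\hat\beta$ with the standard dual action. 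You avoid the cohomological input entirely: since the $v_{gh}$ are linearly independent, commutativity is equivalent to symmetry of the cocycle, and the antisymmetrization $(g,h)\mapsto B_g(h)\overline{B_h(g)}$ is an alternating bicharacter, hence trivial when $\wedge^2\Gamma=0$; freeness you then get by hand, since $\hat\beta_\chi(v_g)=\overline{\chi(g)}v_g$ and every character of the algebra is nonzero on the unitaries $v_g$, and transitivity by counting (a free action of $\hat\Gamma$, of order $|\Gamma|$, on the $|\Gamma|$-point spectrum has one orbit). The paper's route buys slightly more -- an explicit $\hat\Gamma$-equivariant identification of the invariant algebra with $C(\hat\Gamma)$ carrying the translation action -- while yours is more elementary and self-contained, and in addition supplies a detail the paper only asserts: the verification, via the invariance relations $c_{\chi\psi,g}=\chi(g)c_{\psi,g}$, that the $v_g$ actually span the fixed-point algebra (here, as in your counting step, you use finiteness of $\Gamma$, which is part of the standing assumptions of Theorem~\ref{thm:subgroups}).
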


\bp If we view elements of $\Gamma$ as functions on $\hat\Gamma$, then $(C(\hat\Gamma)\rtimes_\beta\Gamma)^{\hat\Gamma}$ is spanned by the unitaries $u_g=g\lambda_g$. We have
$$
u_gu_h=g\beta_g(h)\lambda_{gh}=\overline{h(B_g)}u_{gh}.
$$
Therefore we see that $(C(\hat\Gamma)\rtimes\Gamma)^{\hat\Gamma}$ is a twisted group C$^*$-algebra of $\Gamma$. If $\wedge^2\Gamma=0$ or $B$ is trivial, then the corresponding $2$-cocycle is a coboundary, and $(C(\hat\Gamma)\rtimes_\beta\Gamma)^{\hat\Gamma}\simeq C^*(\Gamma)$ via an isomorphism $q$ that maps $u_g$ into a scalar multiple of $\lambda_g\in C^*(\Gamma)$. As $C^*(\Gamma)\simeq C(\hat\Gamma)$ is abelian, we get the first statement in the formulation, and since the isomorphism $q$ intertwines $\hat\beta$ with the standard dual action on $C^*(\Gamma)$, we get the second statement as well.
\ep

Thus, under the assumption of Theorem~\ref{thm:subgroups}, every orbit $O$ in $G_1$ (which is just $G^\Gamma$ for the case~(ii)) gives us a set of $|\Gamma|$ one-dimensional representations, on which~$\hat\Gamma$ acts transitively. Note again that since the kernel of the map $C(G^{t,\alpha})\to( C(O)\rtimes\Gamma)^{\hat\Gamma}$ is $\alpha$-invariant, the action of $\Gamma$ leaves this set invariant.

\smallskip

To summarize, we get the following classification.

\begin{proposition}\label{prop:rep-classification}
Under the assumption of Theorem~\ref{thm:subgroups}, consider the action of $\Gamma\times\hat\Gamma$ on~$\widehat{C(G^{t,\alpha})}$ defined by $\alpha$ and $\hat\alpha$, that is, $(g,\psi)[\pi]=[\pi\alpha_g^{-1}\hat\alpha_\psi^{-1}]$. Then the set $\widehat{C(G^{t,\alpha})}$ consists of points of two types:
\begin{enumerate}
\item every $(\Gamma\times\hat\Gamma)$-orbit in $G_\reg$ defines a point in $\widehat{C(G^{t,\alpha})}$ that is represented by a $|\Gamma|$-dimensional representation and is stabilized by $\Gamma\times\hat\Gamma$; namely, the point is represented by $\rho_x|_{C(G^{t,\alpha})}$ for any $x\in G_\reg$ in the original orbit;
\item every $\hat\Gamma$-orbit in $G_1$ defines a $\hat\Gamma$-orbit in $\widehat{C(G^{t,\alpha})}$ consisting of $|\Gamma|$ one-dimensional representations; this orbit is invariant under the action of $\Gamma$.
\end{enumerate}

In particular, $\Gamma[\pi]\subset\hat\Gamma[\pi]$ for any $[\pi]\in\widehat{C(G^{t,\alpha})}$, and the set of $\hat\Gamma$-orbits in $\widehat{C(G^{t,\alpha})}$ is canonically identified with the set of $(\Gamma\times\hat\Gamma)$-orbits in $G$.
\end{proposition}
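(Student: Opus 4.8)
The plan is to assemble the pieces already established in the preceding discussion into the stated classification; almost all of the analytic content is carried by the two lemmas computing $(C(\Gamma\times\hat\Gamma)\rtimes\Gamma)^{\hat\Gamma}$ and $(C(\hat\Gamma)\rtimes_\beta\Gamma)^{\hat\Gamma}$, so the proof is essentially bookkeeping. The starting point is the fact, noted above, that by the Mackey-type analysis every irreducible representation $\pi$ of $C(G^{t,\alpha})$ factors through the quotient $C(G^{t,\alpha})\to(C(O_\pi)\rtimes\Gamma)^{\hat\Gamma}$ for a uniquely determined $(\Gamma\times\hat\Gamma)$-orbit $O_\pi\subset G$. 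First I would invoke the hypothesis of Theorem~\ref{thm:subgroups}: in both cases (i) and (ii) every point of $G$, hence every orbit, lies either in $G_\reg$ or in $G_1$ (recalling $G^\Gamma\subset G_1$). This dichotomy produces the two types in the statement. It is also worth recording at this stage that for $x\in G_1$ one has $(g,\psi).x=i(\psi B^x_g)x\in i(\hat\Gamma)x$, so that the $(\Gamma\times\hat\Gamma)$-orbit and the $\hat\Gamma$-orbit of such a point coincide; this is what legitimizes phrasing type (ii) in terms of $\hat\Gamma$-orbits in $G_1$.

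For an orbit $O\subset G_\reg$ the stabilizers in $\Gamma\times\hat\Gamma$ are trivial, so $O$ is equivariantly isomorphic to $\Gamma\times\hat\Gamma$, and the first lemma gives $(C(O)\rtimes\Gamma)^{\hat\Gamma}\simeq\Mat_{|\Gamma|}(\C)$. Since a full matrix algebra has a single irreducible representation up to equivalence, $O$ contributes exactly one class in $\widehat{C(G^{t,\alpha})}$, of dimension $|\Gamma|$, realized by $\rho_x|_{C(G^{t,\alpha})}$ for any $x\in O$. For an orbit $O\subset G_1$ I would apply the second lemma: fixing $x\in O$ and the associated homomorphism $B^x\colon\Gamma\to\hat\Gamma$, in case (i) the hypothesis $\wedge^2\Gamma=0$ applies, while in case (ii) one has $x\in G^\Gamma$, whence $\alpha_g(x)=x$ and $B^x$ is trivial. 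In either situation the lemma yields $(C(O)\rtimes\Gamma)^{\hat\Gamma}\simeq C^*(\Gamma)$ abelian, with a free transitive $\hat\Gamma$-action on its $|\Gamma|$-point spectrum, so $O$ contributes a single free $\hat\Gamma$-orbit of one-dimensional representations.

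The invariance assertions follow from the observation, already recorded, that the kernel of $C(G^{t,\alpha})\to(C(O)\rtimes\Gamma)^{\hat\Gamma}$ is stable under both $\alpha$ and $\hat\alpha$; consequently $\alpha_g$ and $\hat\alpha_\psi$ permute the representations factoring through a fixed orbit $O$. In type (i) there is only one such class, so it is fixed by all of $\Gamma\times\hat\Gamma$; in type (ii) the $|\Gamma|$ classes form a single $\hat\Gamma$-orbit, and the $\alpha$-stability shows this set is preserved by $\Gamma$, giving the claimed $\Gamma$-invariance. The inclusion $\Gamma[\pi]\subset\hat\Gamma[\pi]$ is then immediate in both cases: the point is fixed in type (i), and in type (ii) applying any $\alpha_g$ keeps $[\pi]$ inside its $\hat\Gamma$-orbit.

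Finally, for the canonical identification I would use uniqueness of $O_\pi$ in two directions: surjectivity of the assignment onto $\widehat{C(G^{t,\alpha})}$ is the factorization fact, while disjointness of the contributions of distinct orbits, hence injectivity of $O\mapsto(\text{its }\hat\Gamma\text{-orbit of representations})$, follows because $\pi$ determines $O_\pi$ uniquely. I do not anticipate a serious obstacle here, since the two lemmas supply everything analytic; the one point that genuinely requires care is the verification that the second lemma's hypotheses ($\wedge^2\Gamma=0$ or $B^x$ trivial) hold under each alternative of Theorem~\ref{thm:subgroups}, which is precisely the place where the distinction between cases (i) and (ii) is used.
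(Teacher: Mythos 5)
Your proposal is correct and takes essentially the same approach as the paper, which presents Proposition~\ref{prop:rep-classification} precisely as a summary of the preceding discussion: the Mackey-type factorization through $(C(O_\pi)\rtimes\Gamma)^{\hat\Gamma}$ for a unique orbit $O_\pi$, the two lemmas computing $(C(\Gamma\times\hat\Gamma)\rtimes\Gamma)^{\hat\Gamma}\simeq \Mat_{|\Gamma|}(\C)$ and $(C(\hat\Gamma)\rtimes_\beta\Gamma)^{\hat\Gamma}\simeq C^*(\Gamma)$ under the hypotheses $\wedge^2\Gamma=0$ or $B^x$ trivial, and the kernel-invariance argument for the $\Gamma$- and $\hat\Gamma$-equivariance claims. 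Your explicit remark that for $x\in G_1$ the $(\Gamma\times\hat\Gamma)$-orbit coincides with the $\hat\Gamma$-orbit is a useful clarification of a point the paper leaves implicit, but it does not amount to a different method.
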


\subsection{Classification of noncommutative quotients}

We still need some preparation in order to apply our description of irreducible representations to the classification of quantum subgroups.

\begin{lemma}
\label{lem:hopf-ideal-as-intersection-of-prim-ideals}
 Let $G$ be a compact quantum group and let $I$ be a Hopf $*$-ideal of $\Pol(G)$. Then
$$
I = \bigcap_{\rho  \in \widehat{C_u(G)},\ \rho(I)=0}\Ker(\rho),
$$
where $C_u(G)$ denotes the C$^*$-envelope of $\Pol(G)$.
\end{lemma}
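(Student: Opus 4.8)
The plan is to descend to the quotient Hopf $*$-algebra $\Pol(G)/I$ and to use that it is again a CQG algebra, so that irreducible representations separate its points.

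The inclusion $I\subset\bigcap_{\rho(I)=0}\Ker(\rho)$ is trivial, since $\rho(I)=0$ gives $I\subset\Ker(\rho)$. For the reverse inclusion it is enough to show that every $a\in\Pol(G)$ with $a\notin I$ fails to be annihilated by some irreducible representation $\rho$ of $C_u(G)$ satisfying $\rho(I)=0$. First I would observe that $\Pol(G)/I$ is a CQG algebra. Since $I$ is a Hopf $*$-ideal, the quotient map $\pi\colon\Pol(G)\to\Pol(G)/I$ is a surjective $*$-homomorphism of Hopf $*$-algebras; applying $\pi$ to the unitarity relations of a finite dimensional unitary corepresentation $U=(u_{ij})$ of $G$ shows that $(\pi(u_{ij}))$ is again a finite dimensional unitary corepresentation, and as $\Pol(G)$ is spanned by matrix coefficients of such corepresentations, so is $\Pol(G)/I$. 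Hence $\Pol(G)/I=\Pol(G')$ for a compact quantum group $G'$, and by the structure theory of compact quantum groups \cite{MR3204665} its Haar state is faithful, so that the canonical map $\Pol(G')\to C_u(G')$ is injective.

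I would then conclude as follows. For $a\notin I$ the image $\bar a\in\Pol(G')$ is nonzero, hence nonzero in $C_u(G')$ by injectivity; by the Gelfand--Naimark theorem the irreducible representations of $C_u(G')$ separate its points, so there is an irreducible $\bar\rho$ with $\bar\rho(\bar a)\ne0$. Because every $*$-representation of the CQG algebra $\Pol(G)$ is automatically bounded on the generators (which are coefficients of unitary corepresentations) and hence extends to $C_u(G)$, the representation $\bar\rho\circ\pi$ of $\Pol(G)$ extends to a representation $\rho$ of $C_u(G)$ with $\rho(I)=0$ and $\rho(a)=\bar\rho(\bar a)\ne0$. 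Moreover $\rho$ is irreducible: since $\pi(\Pol(G))$ is dense in $C_u(G')$ and $\Pol(G)$ is dense in $C_u(G)$, the von Neumann algebras generated by $\rho(C_u(G))$ and $\bar\rho(C_u(G'))$ have the same commutant, which is $\C$. This yields the desired $\rho$ and proves equality.

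The step I expect to carry the real weight is the claim that $\Pol(G)/I$ is again a CQG algebra with faithful Haar state; granting this structural input, the remainder is a routine combination of Gelfand--Naimark with the universal property of $C_u(G)$. A minor point to treat with care is the verification that pulled-back irreducible representations of $C_u(G')$ remain irreducible over $C_u(G)$, which follows from the density of the respective smooth subalgebras noted above.
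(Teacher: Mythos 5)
Your proposal is correct and follows essentially the same route as the paper: the paper's proof also reduces the statement to the fact that $\Pol(G)/I$, being spanned by matrix coefficients of finite dimensional unitary corepresentations, is again the Hopf $*$-algebra of a compact quantum group and hence admits a faithful Hilbert space representation. You have merely spelled out the details the paper leaves implicit (faithfulness of the Haar state, Gelfand--Naimark for $C_u(G')$, and irreducibility of representations pulled back along the surjection $C_u(G)\to C_u(G')$), all of which are verified correctly.
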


\begin{proof}
It suffices to show that $\Pol(G)/I$ admits a faithful representation on a Hilbert space. But this is true, since $\Pol(G)/I$, being generated by matrix coefficients of finite dimensional unitary corepresentations, is the Hopf $*$-algebra of regular functions on a compact quantum group.
\end{proof}

For $T \in B(H)$, let $T^t$ be the transpose operator acting on $H^* \simeq \bar{H}$. In terms of the latter space, we have $T^t \bar{\xi} = \overline{T^* \xi}$. Then, whenever $\pi$ is a representation of $C(G) \rtimes_\alpha \Gamma$, we obtain another representation $\pi^\vee$ defined by $x \mapsto \pi^\vee(x) = \pi(S(x))^t$.

\begin{lemma}
\label{lem:eqv-rho-under-inverse}
For any $x\in G_\reg$ and $\psi\in\hat\Gamma$, the representation $\rho_{i(\psi)}|_{C(G^{t,\alpha})}$ is a subrepresentation of $(\rho_x\otimes\rho_x^\vee)|_{C(G^{t,\alpha})}$.
\end{lemma}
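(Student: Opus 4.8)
The plan is to make everything completely explicit in the orbit basis of $\ell^2(\Gamma x)$. Since $x\in G_\reg$, the stabilizer of $x$ in $\Gamma$ is trivial, so $h\mapsto\alpha_h(x)$ is a bijection $\Gamma\to\Gamma x$ and the vectors $e_h:=\delta_{\alpha_h(x)}$ form an orthonormal basis of $\ell^2(\Gamma x)$. First I would record the action $\rho_x(f\lambda_g)e_h=f(\alpha_{gh}(x))e_{gh}$ and then compute the conjugate representation $\rho_x^\vee(a)=\rho_x(S(a))^t$. Using the crossed-product antipode $S(f\lambda_g)=S(\alpha_{g^{-1}}(f))\lambda_{g^{-1}}$ together with the convention $T^t\bar\xi=\overline{T^*\xi}$, a direct computation (passing through the adjoint of $\rho_x(S(f\lambda_g))$) yields the clean formula $\rho_x^\vee(f\lambda_g)\bar e_k=f(\alpha_{gk}(x)^{-1})\bar e_{gk}$.

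Next I would assemble the tensor product on the Hopf $*$-subalgebra $C(G^{t,\alpha})$. Using the coproduct $\Delta(f\lambda_g)=(f_{(1)}\lambda_g)\otimes(f_{(2)}\lambda_g)$ and $\sum f_{(1)}(a)f_{(2)}(b)=f(ab)$, the two one-variable formulas combine into
$$(\rho_x\otimes\rho_x^\vee)(f\lambda_g)(e_h\otimes\bar e_k)=f\big(\alpha_{gh}(x)\,\alpha_{gk}(x)^{-1}\big)\,e_{gh}\otimes\bar e_{gk}.$$

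The key observation is then to restrict attention to the diagonal subspace $D=\mathrm{span}\{v_h:=e_h\otimes\bar e_h\}_{h\in\Gamma}$. Setting $k=h$ collapses the group element inside $f$ to the identity, so $D$ is invariant and $(\rho_x\otimes\rho_x^\vee)(f\lambda_g)v_h=f(e)\,v_{gh}$; on $D$ the representation is therefore a permutation representation of $\Gamma$ scaled by $f(e)$, which I diagonalize by a Fourier transform over $\Gamma$. For the given $\psi\in\hat\Gamma$ I would set $w_\psi=\sum_{h\in\Gamma}\overline{\psi(h)}\,v_h$; a one-line reindexing $h'=gh$ gives $(\rho_x\otimes\rho_x^\vee)(f\lambda_g)w_\psi=\psi(g)f(e)\,w_\psi$. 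It then remains to identify this scalar with $\rho_{i(\psi)}$: for $f\lambda_g\in C(G^{t,\alpha})$ one has $f\in C(G)_g$, so by the defining property of the grading and $\alpha_g(i(\psi))=i(\psi)$ we get $\rho_{i(\psi)}(f\lambda_g)=f(i(\psi))=\psi(g)f(e)$. Hence $\C w_\psi$ carries a copy of $\rho_{i(\psi)}|_{C(G^{t,\alpha})}$, which proves the claim.

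I expect the only delicate point to be the bookkeeping in computing $\rho_x^\vee$, where the crossed-product antipode, the transpose convention, and the Hilbert-space adjoint must all be tracked correctly; once the one-variable formulas are in hand, the passage to the diagonal subspace and the Fourier transform make the conclusion immediate. The regularity hypothesis $x\in G_\reg$ is used in exactly one place, namely to guarantee that the $|\Gamma|$ points $\alpha_h(x)$ are distinct, so that the $v_h$ (and hence $w_\psi$) are nonzero and $D$ has full dimension $|\Gamma|$.
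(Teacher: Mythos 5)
Your proof is correct, but it takes a genuinely different route from the paper's. The paper disposes of the lemma in two lines of abstract Hopf-algebra reasoning: the counit of $C(G)\rtimes_\alpha\Gamma$ coincides with $\rho_e$, so the case $\psi=e$ follows from the standard fact that the counit embeds into $\pi\otimes\pi^\vee$ via the canonical invariant vector $\sum_h e_h\otimes\bar e_h$, and the general case is then obtained by translating with the dual action, using $\rho_{i(\psi)}=\rho_e\hat\alpha_\psi$, the equivariance $\Delta\hat\alpha_\psi=(\hat\alpha_\psi\otimes\iota)\Delta$, and the $\hat\alpha$-invariance of $[\rho_x|_{C(G^{t,\alpha})}]$ from Proposition~\ref{prop:rep-classification}(i). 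You instead compute everything in the orbit basis, and the bookkeeping checks out: from $S(f\lambda_g)=S(\alpha_{g^{-1}}(f))\lambda_{g^{-1}}$ one gets $\rho_x(S(f\lambda_g))e_h=f(\alpha_h(x)^{-1})e_{g^{-1}h}$, whence indeed $\rho_x^\vee(f\lambda_g)\bar e_k=f(\alpha_{gk}(x)^{-1})\bar e_{gk}$, and the rest follows as you say. Note that your $w_e$ is precisely the canonical invariant vector of the abstract argument, while the Fourier-transformed vectors $w_\psi$ replace the paper's appeal to $\rho_x\sim\rho_x\hat\alpha_\psi$; your route is thus self-contained (it does not use Proposition~\ref{prop:rep-classification}(i) at all) and gives more: the diagonal $D$ is invariant even under all of $C(G)\rtimes_\alpha\Gamma$, and it carries exactly $\oplus_{\psi\in\hat\Gamma}\rho_{i(\psi)}$, which is the representation $\tilde p$ that the lemma feeds into the proof of Theorem~\ref{thm:subgroups}; it also makes visible that only triviality of the stabilizer of $x$ in $\Gamma$, not in $\Gamma\times\hat\Gamma$, is needed. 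The paper's argument buys brevity and freedom from basis computations. Two cosmetic points: not every element of $C(G^{t,\alpha})$ is of the form $f\lambda_g$ with $f\in C(G)_g$, so you should phrase the final identification as a computation on the spanning set $\{f\lambda_g : f\in\Pol(G)_g\}$ of the dense $*$-subalgebra $\Pol(G^{t,\alpha})$, concluding by linearity and continuity; and your last step correctly needs both $\alpha_g(i(\psi))=i(\psi)$ (so that $\rho_{i(\psi)}(f\lambda_g)=f(i(\psi))$) and the grading identity $f(i(\psi))=\psi(g)f(e)$, exactly as you wrote.
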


\bp Since the counit of $C(G) \rtimes_\alpha \Gamma$ coincides with $\rho_e$, the lemma is true for $\psi=e\in\hat\Gamma$. But since $\rho_{i(\psi)}=\rho_e\hat\alpha_\psi$, $\Delta\hat\alpha_\psi=(\hat\alpha_\psi\otimes\iota)\Delta$ and $\rho_x\sim \rho_x\hat\alpha_\psi$ by Proposition~\ref{prop:rep-classification}(i), the lemma is then true for any $\psi$.
\ep

\bp[Proof of Theorem~\ref{thm:subgroups}]
Assume that $J\subset\Pol(G^{t,\alpha})$ is a Hopf $*$-ideal with $\Pol(G^{t,\alpha})/J$ noncommutative. We want to apply Proposition~\ref{prop:correspideals2} and for this we have to show that $J$ is $\alpha$-invariant and contained in the kernel of the homomorphism $\tilde p\colon \Pol(G)\rtimes_\alpha\Gamma\to\Pol(\hat\Gamma)$. We start by establishing the second property.

\smallskip

First we note that on $\Pol(G^{t,\alpha})$ the homomorphisms $\Pol(G)\rtimes_\alpha\Gamma\to\Pol(\hat\Gamma)$ given by $f\lambda_g\mapsto f(e)g$ and $f\lambda_g\mapsto fi$ coincide. Therefore on $\Pol(G^{t,\alpha})$ the homomorphism $\tilde p$ can be identified with $\oplus_{\psi \in \hat{\Gamma}} \rho_{i(\psi)}$.

By Lemma~\ref{lem:hopf-ideal-as-intersection-of-prim-ideals}, there exists an irreducible representation $\pi$ of $C(G^{t,\alpha})$ such that $J\subset\Ker(\pi)$ and $\pi(C(G^{t,\alpha}))$ is noncommutative. Since $J$ is a Hopf ideal, we have $J\subset\Ker(\pi\otimes\pi^\vee)$. As~$\pi$ cannot be one-dimensional, by Proposition~\ref{prop:rep-classification} the representation $\pi$ must be equivalent to $\rho_x|_{C(G^{t,\alpha})}$ for some $x\in G_\reg$. But then $\big(\oplus_{\psi \in \hat{\Gamma}} \rho_{i(\psi)}\big)|_{C(G^{t,\alpha})}$ is a subrepresentation of $\pi\otimes\pi^\vee$ by Lemma~\ref{lem:eqv-rho-under-inverse}, hence $J\subset\Ker(\tilde p)$. It follows, in particular, that $J$ is invariant under the coaction of $\Gamma$, that is, under the dual action $\hat\alpha$ of $\hat\Gamma$.

\smallskip

In order to prove that $J$ is $\alpha$-invariant, by Lemma~\ref{lem:hopf-ideal-as-intersection-of-prim-ideals} it suffices to show that if $\pi$ is an irreducible representation of $C(G^{t,\alpha})$ with $J\subset\Ker(\pi)$, then $J\subset\Ker(\pi\alpha_g^{-1})$. But this is clear, since we already proved that $J$ is $\hat\Gamma$-invariant, and $\Gamma[\pi]\subset\hat\Gamma[\pi]$ by Proposition~\ref{prop:rep-classification}.

\smallskip

Therefore by Proposition~\ref{prop:correspideals2} the Hopf $*$-ideal $J$ is defined by a unique closed $\alpha$-invariant subgroup~$H$ of~$G$ containing $i(\hat\Gamma)$, so that $\Pol(G^{t,\alpha})/J=\Pol(H^{t,\alpha})$. By the above argument $\rho_x|_{\Pol(G^{t,\alpha})}$ factors through $\Pol(H^{t,\alpha})$. This is possible only when $x\in H$.
\ep

\begin{example}
As a first illustration of Theorem \ref{thm:subgroups}, let us give the description of  the nonclassical quantum subgroups of $\SU_{-1}(2)$, due to Podle\`s \cite{MR1331688}. Recall first from \cite{MR3340190} and Example~\ref{ex:orth} above that $\SU_{-1}(2)$ can be obtained from $\SU(2)$ by our general twisting procedure, via the $\mathbb Z_2$-action
$$
\begin{pmatrix}
   a & b \\
c & d
  \end{pmatrix} \mapsto \begin{pmatrix}
   a & -b \\
-c & d
  \end{pmatrix}.
$$
Therefore  the non-classical quantum subgroups of $\SU_{-1}(2)$
 correspond to the closed subgroups of $\SU(2)$ containing $\{\pm I_2\}$, stable under the previous $\mathbb Z_2$-action
and containing an element $\begin{pmatrix}
   a & b \\
c & d
  \end{pmatrix}$ with $a b c d\not=0$.
\end{example}

\begin{example} Let $\xi \in \mathbb T$ be a root of unity of order $N\geq 2$, and consider the compact quantum group
${\rm U}_\xi(2)$, which is the compact form of $\GL_{\xi,\xi^{-1}}(2)$~\cite{MR1432363}. Thus, $\Pol(\GL_{\xi,\xi^{-1}}(2)) = \Pol(\mathrm{U}_\xi(2))$ is generated as a $*$-algebra by the elements $(u_{i j})_{i,j=1}^2$ subject to the relations
$$
u_{i 2} u_{i 1} = \xi u_{i 1} u_{i 2}, \quad u_{2 k} u_{1 k} = \xi^{-1} u_{1 k} u_{2 k}, \quad u_{2 1} u_{1 2} = \xi^{-2} u_{1 2} u_{2 1}, \quad [u_{2 2}, u_{1 1}] = 0,
$$
and that the matrix $u = (u_{i j})_{i,j=1}^2 \in M_2(\Pol(\mathrm{U}_\xi(2))$ is unitary. It is well-known \cite{MR1432363}  that $\GL_{\xi,\xi^{-1}}(2)$ is a cocycle twisting of $\GL(2)$. One can also check that this cocycle arises from our recipe in Section~\ref{sec:cpt-q-grps}. To be specific, let $g$ be a generator of $\mathbb{Z}_N$, and consider the $\mathbb Z_N$-action
$$g\begin{pmatrix}
   a & b \\
c & d
  \end{pmatrix} = \begin{pmatrix}
   a & \xi^{-1}b \\
\xi c & d
  \end{pmatrix}.
$$
The Pontryagin dual of $\mathbb Z_N$ embeds into the center of $\mathrm{U}(2)$ in such a way that the defining representation $u$ of $\mathrm{U}(2)$ has degree $g$. Then it follows from the definition that the resulting twisting of ${\rm U}(2)$ is ${\rm U}_\xi(2)$. At $N=2$, i.e.,~$\xi=-1$, the description of the non-classical quantum subgroups of ${\rm U}_{-1}(2)$ is similar to the one given for $\SU_{-1}(2)$ in the previous example.

The subset ${\rm U}(2)^{\ZZ_N}$ consists of the subgroup of diagonal matrices. If we further assume that $N$ is odd, we have a decomposition ${\rm U}(2)={\rm U}(2)_{\rm reg}\cup {\rm U}(2)^{\ZZ_N}$. Thus, in this case Theorem \ref{thm:subgroups} implies that the non-classical quantum subgroups of ${\rm U}_\xi(2)$ correspond to the closed subgroups of ${\rm U}(2)$ stable under the operations
$$\begin{pmatrix}
   a & b \\
c & d
  \end{pmatrix} \mapsto \begin{pmatrix}
   \xi^k a & \xi^{k-l}b \\
\xi^{k+l}c & \xi^kd
  \end{pmatrix}, \ k,l \in \mathbb Z$$
and containing a nondiagonal element.
\end{example}

\begin{remark}
Note that although ${\rm U}_\xi(2)$ is a cocycle twisting of ${\rm U}(2)$ in the usual sense,
the techniques of ~\cite{MR3194750}  do not work well for this example, because $C({\rm U}_\xi(2))$ does not have any irreducible representation of dimension $N^2$ (the order of the twisting subgroup $\mathbb Z_N \times \mathbb Z_N$), which would be required to apply
Theorem 3.6 in \cite{MR3194750}, and here our present graded twisting procedure is better adapted to the study of quantum subgroups. In yet another direction, note as well that ~\cite{MR2671812} (where a different convention for parameter~$\alpha$ is used than in~\cite{MR1432363}) also studies quantum subgroups of $\GL_{\alpha,\beta}(n)$ under certain restrictions on $(\alpha, \beta)$, but the framework there is strictly that of non semisimple (and hence non compact) quantum groups, and these results do not apply to ${\rm U}_\xi(2)$.
\end{remark}

\begin{example}
Let $\tau=(\tau_1, \ldots ,\tau_{n-1}) \in \mu_n^{n-1}$ and consider
 the quantum group  $\SU^\tau(n)$ defined in \cite{MR3340190}, corresponding to the graded twisting of $\SU(n)$ associated to the cocentral $\mathbb Z_n$-action given by the embedding of $\mathbb Z_n$ into the center of $\SU(n)$ and by
$\alpha(u_{ij})=\gamma_i\gamma_j^{-1}u_{ij}$, where $\gamma_i=\prod_{1\leq r<i}\tau_r$.
It follows from Theorem \ref{thm:subgroups} that if $n\geq 3$ is prime,  the non-classical quantum subgroups of  $\SU^\tau(n)$
correspond to the closed subgroups of $\SU(n)$ stable under the operations
$$(g_{ij}) \mapsto (\xi^k(\gamma_i\gamma_j^{-1})^lg_{ij}), \ k,l \in \mathbb Z$$
where $\xi$ is a root of unity of order $n$, and containing an element $g=(g_{ij})$ with $\gamma_i\gamma_j^{-1} \not=1$ for some indices $i,j$.

\end{example}

\raggedright
\begin{bibdiv}
\begin{biblist}

\bib{MR1127037}{article}{
      author={Artin, Michael},
      author={Schelter, William},
      author={Tate, John},
       title={Quantum deformations of {${\rm GL}_n$}},
        date={1991},
        ISSN={0010-3640},
     journal={Comm. Pure Appl. Math.},
      volume={44},
      number={8-9},
       pages={879\ndash 895},
         url={http://dx.doi.org/10.1002/cpa.3160440804},
         doi={10.1002/cpa.3160440804},
      review={\MR{1127037 (92i:17014)}},
}

\bib{MR1378260}{article}{
      author={Banica, Teodor},
       title={Th{\'e}orie des repr{\'e}sentations du groupe quantique compact
  libre {${\rm O}(n)$}},
        date={1996},
        ISSN={0764-4442},
     journal={C. R. Acad. Sci. Paris S{\'e}r. I Math.},
      volume={322},
      number={3},
       pages={241\ndash 244},
      eprint={\href{http://arxiv.org/abs/math/9806063v4}{{\tt
  arXiv:math/9806063v4 [math.QA]}}},
      review={\MR{1378260 (97a:46108)}},
}

\bib{MR1484551}{article}{
      author={Banica, Teodor},
       title={Le groupe quantique compact libre {${\rm U}(n)$}},
        date={1997},
        ISSN={0010-3616},
     journal={Comm. Math. Phys.},
      volume={190},
      number={1},
       pages={143\ndash 172},
      eprint={\href{http://arxiv.org/abs/math/9901042v5}{{\tt
  arXiv:math/9901042v5 [math.QA]}}},
         url={http://dx.doi.org/10.1007/s002200050237},
         doi={10.1007/s002200050237},
      review={\MR{1484551 (99k:46095)}},
}

\bib{MR1679171}{article}{
      author={Banica, Teodor},
       title={Representations of compact quantum groups and subfactors},
        date={1999},
        ISSN={0075-4102},
     journal={J. Reine Angew. Math.},
      volume={509},
       pages={167\ndash 198},
      eprint={\href{http://arxiv.org/abs/math/9804015}{{\tt arXiv:math/9804015
  [math.QA]}}},
         url={http://dx.doi.org/10.1515/crll.1999.037},
         doi={10.1515/crll.1999.037},
      review={\MR{1679171 (2000g:46087)}},
}

\bib{arXiv:1505.00646}{misc}{
      author={Banica, Teodor},
       title={Half-liberated manifolds, and their quantum isometries},
         how={preprint},
        date={2015},
      eprint={\href{http://arxiv.org/abs/1505.00646}{{\tt arXiv:1505.00646
  [math.OA]}}},
}

\bib{MR2376808}{article}{
      author={Banica, Teodor},
      author={Bichon, Julien},
      author={Collins, Beno{\^{\i}}t},
       title={The hyperoctahedral quantum group},
        date={2007},
        ISSN={0970-1249},
     journal={J. Ramanujan Math. Soc.},
      volume={22},
      number={4},
       pages={345\ndash 384},
      eprint={\href{http://arxiv.org/abs/math/0701859}{{\tt arXiv:math/0701859
  [math.RT]}}},
      review={\MR{2376808 (2009d:46125)}},
}

\bib{MR2718205}{article}{
      author={Banica, Teodor},
      author={Curran, Stephen},
      author={Speicher, Roland},
       title={Classification results for easy quantum groups},
        date={2010},
        ISSN={0030-8730},
     journal={Pacific J. Math.},
      volume={247},
      number={1},
       pages={1\ndash 26},
      eprint={\href{http://arxiv.org/abs/0906.3890}{{\tt arXiv:0906.3890
  [math.OA]}}},
         url={http://dx.doi.org/10.2140/pjm.2010.247.1},
         doi={10.2140/pjm.2010.247.1},
      review={\MR{2718205 (2011k:46099)}},
}

\bib{MR2554941}{article}{
      author={Banica, Teodor},
      author={Speicher, Roland},
       title={Liberation of orthogonal {L}ie groups},
        date={2009},
        ISSN={0001-8708},
     journal={Adv. Math.},
      volume={222},
      number={4},
       pages={1461\ndash 1501},
      eprint={\href{http://arxiv.org/abs/0808.2628}{{\tt arXiv:0808.2628
  [math.QA]}}},
         url={http://dx.doi.org/10.1016/j.aim.2009.06.009},
         doi={10.1016/j.aim.2009.06.009},
      review={\MR{2554941 (2010j:46125)}},
}

\bib{MR2835874}{article}{
      author={Bhowmick, Jyotishman},
      author={D'Andrea, Francesco},
      author={D{\polhk{a}}browski, Ludwik},
       title={Quantum isometries of the finite noncommutative geometry of the
  standard model},
        date={2011},
        ISSN={0010-3616},
     journal={Comm. Math. Phys.},
      volume={307},
      number={1},
       pages={101\ndash 131},
      eprint={\href{http://arxiv.org/abs/1009.2850}{{\tt arXiv:1009.2850
  [math.QA]}}},
         url={http://dx.doi.org/10.1007/s00220-011-1301-2},
         doi={10.1007/s00220-011-1301-2},
      review={\MR{2835874}},
}

\bib{MR2096666}{article}{
      author={Bichon, Julien},
       title={Free wreath product by the quantum permutation group},
        date={2004},
        ISSN={1386-923X},
     journal={Algebr. Represent. Theory},
      volume={7},
      number={4},
       pages={343\ndash 362},
      eprint={\href{http://arxiv.org/abs/math/0107029}{{\tt arXiv:math/0107029
  [math.QA]}}},
         url={http://dx.doi.org/10.1023/B:ALGE.0000042148.97035.ca},
         doi={10.1023/B:ALGE.0000042148.97035.ca},
      review={\MR{2096666 (2005j:46043)}},
}

\bib{MR2302731}{article}{
      author={Bichon, Julien},
       title={Co-representation theory of universal co-sovereign {H}opf
  algebras},
        date={2007},
        ISSN={0024-6107},
     journal={J. Lond. Math. Soc. (2)},
      volume={75},
      number={1},
       pages={83\ndash 98},
      eprint={\href{http://arxiv.org/abs/math/0211059}{{\tt arXiv:math/0211059
  [math.QA]}}},
         url={http://dx.doi.org/10.1112/jlms/jdl007},
         doi={10.1112/jlms/jdl007},
      review={\MR{2302731}},
}

\bib{MR3413872}{article}{
      author={Bichon, Julien},
      author={Riche, Simon},
       title={Hopf algebras having a dense big cell},
        date={2016},
        ISSN={0002-9947},
     journal={Trans. Amer. Math. Soc.},
      volume={368},
      number={1},
       pages={515\ndash 538},
      eprint={\href{http://arxiv.org/abs/1307.3567}{{\tt arXiv:1307.3567
  [math.QA]}}},
         url={http://dx.doi.org/10.1090/tran/6335},
         doi={10.1090/tran/6335},
      review={\MR{3413872}},
}

\bib{MR2202309}{article}{
      author={Bichon, Julien},
      author={De~Rijdt, An},
      author={Vaes, Stefaan},
       title={Ergodic coactions with large multiplicity and monoidal
  equivalence of quantum groups},
        date={2006},
        ISSN={0010-3616},
     journal={Comm. Math. Phys.},
      volume={262},
      number={3},
       pages={703\ndash 728},
      eprint={\href{http://arxiv.org/abs/math/0502018}{{\tt arXiv:math/0502018
  [math.OA]}}},
         url={http://dx.doi.org/10.1007/s00220-005-1442-2},
         doi={10.1007/s00220-005-1442-2},
      review={\MR{2202309 (2007a:46072)}},
}

\bib{MR3194750}{article}{
      author={Bichon, Julien},
      author={Yuncken, Robert},
       title={Quantum subgroups of the compact quantum group {$\rm
  SU_{-1}(3)$}},
        date={2014},
        ISSN={0024-6093},
     journal={Bull. Lond. Math. Soc.},
      volume={46},
      number={2},
       pages={315\ndash 328},
      eprint={\href{http://arxiv.org/abs/1306.6244}{{\tt arXiv:1306.6244
  [math.QA]}}},
         url={http://dx.doi.org/10.1112/blms/bdt105},
         doi={10.1112/blms/bdt105},
      review={\MR{3194750}},
}

\bib{MR2853627}{article}{
      author={Chirvasitu, Alexandru},
       title={Grothendieck rings of universal quantum groups},
        date={2012},
        ISSN={0021-8693},
     journal={J. Algebra},
      volume={349},
       pages={80\ndash 97},
      eprint={\href{http://arxiv.org/abs/1006.3464}{{\tt arXiv:1006.3464
  [math.RA]}}},
         url={http://dx.doi.org/10.1016/j.jalgebra.2011.09.020},
         doi={10.1016/j.jalgebra.2011.09.020},
      review={\MR{2853627}},
}

\bib{MR1213985}{article}{
      author={Doi, Yukio},
       title={Braided bialgebras and quadratic bialgebras},
        date={1993},
        ISSN={0092-7872},
     journal={Comm. Algebra},
      volume={21},
      number={5},
       pages={1731\ndash 1749},
         url={http://dx.doi.org/10.1080/00927879308824649},
         doi={10.1080/00927879308824649},
      review={\MR{1213985 (94a:16071)}},
}

\bib{MR1047964}{article}{
      author={Drinfel{\cprime}d, V.~G.},
       title={Quasi-{H}opf algebras},
        date={1989},
        ISSN={0234-0852},
     journal={Algebra i Analiz},
      volume={1},
      number={6},
       pages={114\ndash 148},
        note={Translation in Leningrad Math. J. \textbf{1} (1990), no. 6,
  1419--1457},
      review={\MR{1047964 (91b:17016)}},
}

\bib{MR1068703}{article}{
      author={Dubois-Violette, Michel},
      author={Launer, Guy},
       title={The quantum group of a nondegenerate bilinear form},
        date={1990},
        ISSN={0370-2693},
     journal={Phys. Lett. B},
      volume={245},
      number={2},
       pages={175\ndash 177},
         url={http://dx.doi.org/10.1016/0370-2693(90)90129-T},
         doi={10.1016/0370-2693(90)90129-T},
      review={\MR{1068703 (91j:16049)}},
}

\bib{MR2046203}{article}{
      author={Etingof, Pavel},
      author={Ostrik, Viktor},
       title={Module categories over representations of {${\rm SL}\sb q(2)$}
  and graphs},
        date={2004},
        ISSN={1073-2780},
     journal={Math. Res. Lett.},
      volume={11},
      number={1},
       pages={103\ndash 114},
      eprint={\href{http://arxiv.org/abs/math/0302130}{{\tt arXiv:math/0302130
  [math.QA]}}},
      review={\MR{2046203 (2005d:20088)}},
}

\bib{MR1395206}{article}{
      author={Enock, Michel},
      author={Va{\u\i}nerman, Leonid},
       title={Deformation of a {K}ac algebra by an abelian subgroup},
        date={1996},
        ISSN={0010-3616},
     journal={Comm. Math. Phys.},
      volume={178},
      number={3},
       pages={571\ndash 596},
         url={http://projecteuclid.org/getRecord?id=euclid.cmp/1104286767},
         doi={10.1007/BF02108816},
      review={\MR{1395206 (97f:46115)}},
}

\bib{MR2671812}{article}{
      author={Garc{\'{\i}}a, Gast{\'o}n~Andr{\'e}s},
       title={Quantum subgroups of {${\rm GL}\sb {\alpha,\beta}(n)$}},
        date={2010},
        ISSN={0021-8693},
     journal={J. Algebra},
      volume={324},
      number={6},
       pages={1392\ndash 1428},
      eprint={\href{http://arxiv.org/abs/0903.5510}{{\tt arXiv:0903.5510
  [math.QA]}}},
         url={http://dx.doi.org/10.1016/j.jalgebra.2010.07.020},
         doi={10.1016/j.jalgebra.2010.07.020},
      review={\MR{2671812 (2011i:20071)}},
}

\bib{MR2383894}{article}{
      author={Gelaki, Shlomo},
      author={Nikshych, Dmitri},
       title={Nilpotent fusion categories},
        date={2008},
        ISSN={0001-8708},
     journal={Adv. Math.},
      volume={217},
      number={3},
       pages={1053\ndash 1071},
      eprint={\href{http://arxiv.org/abs/math/0610726}{{\tt arXiv:math/0610726
  [math.QA]}}},
         url={http://dx.doi.org/10.1016/j.aim.2007.08.001},
         doi={10.1016/j.aim.2007.08.001},
      review={\MR{2383894}},
}

\bib{MR1644299}{article}{
      author={Hiai, Fumio},
      author={Izumi, Masaki},
       title={Amenability and strong amenability for fusion algebras with
  applications to subfactor theory},
        date={1998},
        ISSN={0129-167X},
     journal={Internat. J. Math.},
      volume={9},
      number={6},
       pages={669\ndash 722},
         url={http://dx.doi.org/10.1142/S0129167X98000300},
         doi={10.1142/S0129167X98000300},
      review={\MR{1644299 (99h:46116)}},
}

\bib{MR1492989}{book}{
      author={Klimyk, Anatoli},
      author={Schm{\"u}dgen, Konrad},
       title={Quantum groups and their representations},
      series={Texts and Monographs in Physics},
   publisher={Springer-Verlag},
     address={Berlin},
        date={1997},
        ISBN={3-540-63452-5},
      review={\MR{1492989 (99f:17017)}},
}

\bib{MR1237835}{incollection}{
      author={Kazhdan, David},
      author={Wenzl, Hans},
       title={Reconstructing monoidal categories},
        date={1993},
   booktitle={I. {M}. {G}el\cprime fand {S}eminar},
      series={Adv. Soviet Math.},
      volume={16},
   publisher={Amer. Math. Soc.},
     address={Providence, RI},
       pages={111\ndash 136},
      review={\MR{1237835 (95e:18007)}},
}

\bib{MR2130607}{article}{
      author={M{\"u}ger, Michael},
       title={On the center of a compact group},
        date={2004},
        ISSN={1073-7928},
     journal={Int. Math. Res. Not.},
      number={51},
       pages={2751\ndash 2756},
      eprint={\href{http://arxiv.org/abs/math/0312257}{{\tt arXiv:math/0312257
  [math.GR]}}},
         url={http://dx.doi.org/10.1155/S1073792804133850},
         doi={10.1155/S1073792804133850},
      review={\MR{2130607 (2005m:22003)}},
}

\bib{MR3204665}{book}{
      author={Neshveyev, Sergey},
      author={Tuset, Lars},
       title={Compact quantum groups and their representation categories},
      series={Cours Sp{\'e}cialis{\'e}s [Specialized Courses]},
   publisher={Soci{\'e}t{\'e} Math{\'e}matique de France, Paris},
        date={2013},
      volume={20},
        ISBN={978-2-85629-777-3},
      review={\MR{3204665}},
}

\bib{arXiv:1405.6574}{misc}{
      author={Neshveyev, Sergey},
      author={Yamashita, Makoto},
       title={Classification of non-{K}ac compact quantum groups of {${\rm
  SU}(n)$} type},
         how={preprint},
        date={2014},
      eprint={\href{http://arxiv.org/abs/1405.6574}{{\tt arXiv:1405.6574
  [math.QA]}}},
}

\bib{arXiv:1405.6572}{misc}{
      author={Neshveyev, Sergey},
      author={Yamashita, Makoto},
       title={Poisson boundaries of monoidal categories},
         how={preprint},
        date={2014},
      eprint={\href{http://arxiv.org/abs/1405.6572}{{\tt arXiv:1405.6572
  [math.OA]}}},
}

\bib{MR3340190}{article}{
      author={Neshveyev, Sergey},
      author={Yamashita, Makoto},
       title={Twisting the {$q$}-deformations of compact semisimple {L}ie
  groups},
        date={2015},
        ISSN={0025-5645},
     journal={J. Math. Soc. Japan},
      volume={67},
      number={2},
       pages={637\ndash 662},
      eprint={\href{http://arxiv.org/abs/1305.6949}{{\tt arXiv:1305.6949
  [math.OA]}}},
         url={http://dx.doi.org/10.2969/jmsj/06720637},
         doi={10.2969/jmsj/06720637},
      review={\MR{3340190}},
}

\bib{MR1331688}{article}{
      author={Podle{\'s}, Piotr},
       title={Symmetries of quantum spaces. {S}ubgroups and quotient spaces of
  quantum {${\rm SU}(2)$} and {${\rm SO}(3)$} groups},
        date={1995},
        ISSN={0010-3616},
     journal={Comm. Math. Phys.},
      volume={170},
      number={1},
       pages={1\ndash 20},
      eprint={\href{http://arxiv.org/abs/hep-th/9402069}{{\tt
  arXiv:hep-th/9402069 [hep-th]}}},
         url={http://projecteuclid.org/getRecord?id=euclid.cmp/1104272946},
      review={\MR{1331688 (96j:58013)}},
}

\bib{MR1408508}{article}{
      author={Schauenburg, Peter},
       title={Hopf bi-{G}alois extensions},
        date={1996},
        ISSN={0092-7872},
     journal={Comm. Algebra},
      volume={24},
      number={12},
       pages={3797\ndash 3825},
         url={http://dx.doi.org/10.1080/00927879608825788},
         doi={10.1080/00927879608825788},
      review={\MR{1408508 (97f:16064)}},
}

\bib{MR1065785}{article}{
      author={Takeuchi, Mitsuhiro},
       title={A two-parameter quantization of {${\rm GL}(n)$}},
    subtitle={Summary},
        date={1990},
        ISSN={0386-2194},
     journal={Proc. Japan Acad. Ser. A Math. Sci.},
      volume={66},
      number={5},
       pages={112\ndash 114},
         url={http://projecteuclid.org/euclid.pja/1195512514},
      review={\MR{1065785 (92f:16049)}},
}

\bib{MR1432363}{article}{
      author={Takeuchi, Mitsuhiro},
       title={Cocycle deformations of coordinate rings of quantum matrices},
        date={1997},
        ISSN={0021-8693},
     journal={J. Algebra},
      volume={189},
      number={1},
       pages={23\ndash 33},
         url={http://dx.doi.org/10.1006/jabr.1996.6878},
         doi={10.1006/jabr.1996.6878},
      review={\MR{1432363 (97m:16077)}},
}

\bib{MR1815142}{article}{
      author={Tambara, Daisuke},
       title={Invariants and semi-direct products for finite group actions on
  tensor categories},
        date={2001},
        ISSN={0025-5645},
     journal={J. Math. Soc. Japan},
      volume={53},
      number={2},
       pages={429\ndash 456},
         url={http://dx.doi.org/10.2969/jmsj/05320429},
         doi={10.2969/jmsj/05320429},
      review={\MR{1815142 (2002e:18010)}},
}

\bib{MR2335776}{article}{
      author={Tomatsu, Reiji},
       title={A characterization of right coideals of quotient type and its
  application to classification of {P}oisson boundaries},
        date={2007},
        ISSN={0010-3616},
     journal={Comm. Math. Phys.},
      volume={275},
      number={1},
       pages={271\ndash 296},
      eprint={\href{http://arxiv.org/abs/math/0611327}{{\tt arXiv:math/0611327
  [math.OA]}}},
         url={http://dx.doi.org/10.1007/s00220-007-0267-6},
         doi={10.1007/s00220-007-0267-6},
      review={\MR{2335776 (2008j:46058)}},
}

\bib{MR2674592}{book}{
      author={Turaev, Vladimir},
       title={Homotopy quantum field theory},
      series={EMS Tracts in Mathematics},
   publisher={European Mathematical Society (EMS), Z{\"u}rich},
        date={2010},
      volume={10},
        ISBN={978-3-03719-086-9},
         url={http://dx.doi.org/10.4171/086},
         doi={10.4171/086},
        note={Appendix 5 by Michael M{\"u}ger and Appendices 6 and 7 by Alexis
  Virelizier},
      review={\MR{2674592}},
}

\bib{MR1382726}{article}{
      author={Van~Daele, Alfons},
      author={Wang, Shuzhou},
       title={Universal quantum groups},
        date={1996},
        ISSN={0129-167X},
     journal={Internat. J. Math.},
      volume={7},
      number={2},
       pages={255\ndash 263},
         url={http://dx.doi.org/10.1142/S0129167X96000153},
         doi={10.1142/S0129167X96000153},
      review={\MR{1382726 (97d:46090)}},
}

\bib{MR1316765}{article}{
      author={Wang, Shuzhou},
       title={Free products of compact quantum groups},
        date={1995},
        ISSN={0010-3616},
     journal={Comm. Math. Phys.},
      volume={167},
      number={3},
       pages={671\ndash 692},
         url={http://projecteuclid.org/getRecord?id=euclid.cmp/1104272163},
      review={\MR{1316765 (95k:46104)}},
}

\bib{MR1014926}{article}{
      author={Wassermann, Antony},
       title={Ergodic actions of compact groups on operator algebras. {I}.
  {G}eneral theory},
        date={1989},
        ISSN={0003-486X},
     journal={Ann. of Math. (2)},
      volume={130},
      number={2},
       pages={273\ndash 319},
         url={http://dx.doi.org/10.2307/1971422},
         doi={10.2307/1971422},
      review={\MR{MR1014926 (91e:46092)}},
}

\bib{MR2288954}{book}{
      author={Williams, Dana~P.},
       title={Crossed products of {$C{^\ast}$}-algebras},
      series={Mathematical Surveys and Monographs},
   publisher={American Mathematical Society, Providence, RI},
        date={2007},
      volume={134},
        ISBN={978-0-8218-4242-3; 0-8218-4242-0},
         url={http://dx.doi.org/10.1090/surv/134},
         doi={10.1090/surv/134},
      review={\MR{2288954 (2007m:46003)}},
}

\end{biblist}
\end{bibdiv}

\bigskip

\end{document}